\newcommand{\sbullet}{
  \hbox{\fontfamily{lmr}\fontsize{.4\dimexpr(\f@size pt)}{0}\selectfont\textbullet}}
\DeclareRobustCommand{\mathbullet}{\accentset{\sbullet}}
\DeclarePairedDelimiterX\setc[2]{\{}{\}}{\,#1 \;\delimsize\vert\; #2\,}
\newtheoremstyle{important-thm}
     {3pt}
     {3pt}
     {\slshape}
     {}
     {\bfseries}
     {.}
     {.5em}
     {}
\theoremstyle{plain}
\theoremstyle{important-thm}
\newtheorem{theorem}{Theorem}
\newtheorem{lemma}[theorem]{Lemma}
\newtheorem{proposition}[theorem]{Proposition}
\newtheorem{corollary}[theorem]{Corollary}
\theoremstyle{definition}
\newtheorem{remark}[theorem]{Remark}
\newtheorem{example}[theorem]{Example}
\newtheorem{definition}[theorem]{Definition}
\newtheorem{notation}[theorem]{Notations}
 \numberwithin{equation}{section}
 \numberwithin{theorem}{section}
\DeclareMathOperator{\End}{End}
\DeclareMathOperator{\Rep}{Rep}
\DeclareMathOperator{\id}{id}
\DeclareMathOperator{\Arf}{Arf}
\def\be#1\ee{\begin{equation}#1\end{equation}}
\def\ba#1\ea{\begin{align}#1\end{align}}
\newcommand\Vect{{\mathcal{V}\hspace{-.5pt}ect}}
\newcommand\SVect{{\mathcal{SV}\hspace{-.5pt}ect}}
\newcommand{\Bord}[1]{{\mathcal{B}\hspace{-.5pt}ord_2^{\hspace{1pt} #1}}}
\newcommand{\Mod}[1]{\ (\text{mod}\ #1)}
\newcommand\eps           {\varepsilon}
\newcommand\Cb            {\mathbb{C}}
\newcommand\Ib            {\mathbb{I}}
\newcommand\Rb            {\mathbb{R}}
\newcommand\Zb            {\mathbb{Z}}
\newcommand\Ac            {\mathcal{A}}
\newcommand\Cc            {\mathcal{C}}
\renewcommand\Dc            {\mathcal{D}}
\newcommand\Ec            {\mathcal{E}}
\newcommand\Fc            {\mathcal{F}}
\newcommand\Kc            {\mathcal{K}}
\renewcommand\Lc            {\mathcal{L}}
\newcommand\Mc            {\mathcal{M}}
\renewcommand\Rc            {\mathcal{R}}
\newcommand\Sc            {\mathcal{S}}
\newcommand\Sb            {\mathbb{S}_1}
\renewcommand\Yc            {\mathcal{Y}}
\newcommand\Zc            {\mathcal{Z}}
\newcommand\funZ            {\mathcal{Z}}
\newcommand{\Cl}{C\hspace*{-1pt}\ell}
\newcommand{\void}[1]{}
\newcommand\doi[2]        {\href{http://dx.doi.org/#1}{#2}}
\begin{document}

\thispagestyle{empty}
\def\thefootnote{\fnsymbol{footnote}}
\begin{flushright}
ZMP-HH/18-7
\\
Hamburger Beitr\"age zur Mathematik 726
\end{flushright}
\vskip 3em
\begin{center}\LARGE
Topological field theory on $r$-spin surfaces\\ 
and the Arf invariant
\end{center}

\vskip 2em
\begin{center}
{\large 
Ingo Runkel $^{a}$~~and~~L\'or\'ant Szegedy $^{b}$~\footnote{Emails: {\tt ingo.runkel@uni-hamburg.de}~,~{\tt lorant.szegedy@ist.ac.at}}}
\\[1em]
${}^{a}$ Fachbereich Mathematik, Universit\"at Hamburg\\
Bundesstra\ss e 55, 20146 Hamburg, Germany
\\[1em]
${}^{b}$ Institute of Science and Technology Austria\\
Am Campus 1, 3400 Klosterneuburg, Austria
\end{center}

\vskip 2em

\begin{abstract}
We give a combinatorial model for $r$-spin surfaces with parametrised boundary based on 
  \cite{Novak:2015phd}.
The $r$-spin structure is encoded in terms of $\Zb_r$-valued indices  assigned to the edges of a polygonal decomposition. 
This combinatorial model is designed for our state sum construction of two-dimensional 
topological field theories on $r$-spin surfaces.
We show that an example of such a topological field theory 
computes the Arf-invariant of an $r$-spin surface as introduced in 
  \cite{Randal:2014rs,Geiges:2012rs}. 
This implies in particular that the $r$-spin 
Arf-invariant is constant on orbits of the mapping class group, providing an alternative proof of that fact.
\end{abstract}

\setcounter{footnote}{0}
\def\thefootnote{\arabic{footnote}}

\newpage

{\small

\tableofcontents

}

\newpage

\section{Introduction}

\allowdisplaybreaks

\subsubsection*{\texorpdfstring{$r$}{r}-spin topological field theories}
The rotation group in two dimensions is fundamentally different from the higher dimensional rotation groups. Namely, $SO(n)$ for $n\ge3$ has universal cover $Spin(n)$ which has a finite fibre (namely $\Zb_2$), while the universal cover of $SO(2)$ is $\Rb$ which has an infinite fibre given by $\Zb$. Accordingly, in two dimensions one can speak of $r$-spin structures, for $r \in \Zb_{\ge 0}$, where one considers the connected cover of $SO(2)$ with fibre $\Zb_r$. The special case $r=0$ is the universal cover. We review $r$-spin surfaces in detail in Section~\ref{sec:rspinbord}. Here we only mention that a $1$-spin surface is just an oriented surface, a $2$-spin surface is what is usually referred to as a surface with spin structure, and giving a $0$-spin structure on a surface is equivalent to giving a framing.
We stress that the case $r=0$ is included in all of the following discussion.

We consider $r$-spin surfaces whose boundary components are parametrised by  annuli with $r$-spin structure. The $r$-spin structures on these annuli are in bijection with $\Zb_r$.
The $r$-spin surfaces with parametrised boundary form a symmetric monoidal category $\Bord{r}$, 
whose objects are ``circles with $r$-spin structures'', 
which we describe as finite lists of elements of $\Zb_r$,
and which dictate
the restriction of the $r$-spin structure of a bordism to the in- and outgoing boundary components.

One defines a two-dimensional $r$-spin topological field theory (TFT) to be a symmetric monoidal functor 
\begin{align} \label{eq:intro-rTFT-Z}
\funZ : \Bord{r} \to \mathcal{S} \ ,
\end{align}
for a symmetric monoidal target category $\mathcal{S}$, which we will assume to be additive and idempotent-complete (and have countable direct sums in case $r=0$).

\subsubsection*{Combinatorial model of \texorpdfstring{$r$}{r}-spin surfaces}
In \cite{Novak:2015phd} a combinatorial description of $r$-spin surfaces 
is given based on the choice of a triangulation. For our applications,  
triangulations are cumbersome due to the large number of triangles required even for simple surfaces. 
We give a more convenient combinatorial model based on
decompositions into polygons called PLCW-decompositions \cite{Kirillov:2012pl} (see Section~\ref{sec:PLCW-dec}). 
For example, this allows one to describe a genus $g$-surface with $b$ boundary components 
with $g+b \ge 1$ in terms of a single $(4g+3b)$-gon 
with appropriately identified edges.

The combinatorial representation of an $r$-spin structure on a surface $\Sigma$ is in terms of a marked PLCW-decomposition, that is:
\begin{itemize}
\item
a PLCW decomposition of $\Sigma$ such that each boundary component contains a single edge and a single vertex,
\item
a choice of a marked edge for each face (before identification of the edges),
\item
an orientation of each edge,
\item 
an edge index $s_e \in \Zb_r$ for each edge $e$,
\end{itemize}
and where the edge indices need to satisfy a consistency condition around each vertex, see Section~\ref{sec:combsub}.
To obtain an $r$-spin structure from the above data, one endows each face with its unique 
(up to isomorphism) 
$r$-spin structure and then uses the edge indices to define transition functions between the faces. Finally, one extends the $r$-spin structure to the vertices, which is possible due to the above consistency condition.
Different sets of combinatorial data can describe isomorphic
$r$-spin structures on a given surface, and we give an equivalence relation which precisely encodes that redundancy (Theorem~\ref{thm:rscomb}).

\begin{remark}
The mapping class group of a surface acts on (isomorphism classes of) $r$-spin structures on that surface. Counts of the orbits of this action
  can be found in several places in the literature \cite{Jarvis:1998,Natanzon:2004harf,Randal:2014rs,Geiges:2012rs,Kawazumi:2017mcg,Salter:2017mon}. They differ in the detailed setup and in the restrictions placed on $r,g,b$. 
In \cite{Szegedy:2018phd} we give a different treatment covering all cases of $r,g,b \ge 0$ using our combinatorial model and a state-sum $r$-spin TFT.  
  \label{rem:mcg-orbit-count}
\end{remark}

\subsubsection*{State-sum construction}

We use the above combinatorial model to give a state-sum construction of $r$-spin TFTs.
The input data is a Frobenius algebra $A \in \mathcal{S}$ whose Nakayama automorphism $N$ satisfies $N^r=\id_A$, and whose window element $\mu \circ \Delta \circ \eta : \Ib \to A$ is invertible (here $\mu$, $\Delta$, $\eta$ are the product, coproduct, and unit of $A$, respectively). 
We have (Theorem~\ref{thm:tft}):
\begin{theorem}
  Let $A\in\Sc$ be a Frobenius algebra with $N^r=\id$ and with invertible window element
  in a symmetric monoidal category $\Sc$.
  The state-sum construction defines a symmetric monoidal functor
  \begin{align}
    \funZ_A:\Bord{r}\to\Sc \ .
    \label{eq:thm:state-sum-tft}
  \end{align}
  \label{thm:state-sum-tft}
\end{theorem}

We prove this theorem by reducing it to the case of state sums for triangulations, which was treated in \cite{Novak:2015phd}.
State-sum constructions in the case of 2-spin were considered previously in \cite{Barrett:2013sp,Novak:2014sp,Gaiotto:2016spin}.

Write $\funZ_A$ for the functor \eqref{eq:intro-rTFT-Z} obtained in this way. We show that
\begin{align}
	Z^r(A) := \bigoplus_{\lambda \in \Zb_r} Z_\lambda \ ,
\end{align}
where $Z_\lambda$ is the value of the functor $\funZ_A$ on the $r$-spin circle $\lambda$, gets equipped by $\funZ_A$ with a unital associative $\Zb_r$-graded 
algebra structure which can be understood as a $\Zb_r$-graded version of the centre of an algebra
(Proposition~\ref{prop:state-space-graded-center}).
For $r=2$, this algebraic structure on state spaces has also been found in \cite{Moore:2006db}.

In \cite{Dyckerhoff:2015csg} Frobenius algebras with $N^r=\id$ appear under the name of $\Lambda_r$-Frobenius algebras in relation to $r$-spin surfaces. In \cite{Stern:2016stft} $\Lambda_r$-Frobenius algebras have been used to describe $r$-spin TFTs defined on ``open bordisms'', 
meaning that the objects in the bordism category are disjoint unions of intervals.
Our $r$-spin TFTs are defined on ``closed bordisms'', meaning that objects are disjoint unions of circles. These $r$-spin TFTs have been classified in \cite{SternSzegedy} in terms of closed $\Lambda_r$-Frobenius algebras.

\subsubsection*{TFT computing the Arf-invariant}
As an example, let $\mathcal{S}$ be the category of super vector spaces
over some field $k$ not of characteristic $2$ and $A$ the Clifford algebra $\Cl(1) = k \oplus k \theta$ in one odd generator $\theta$. Assume that $r$ is even. One finds that $Z_\lambda = k \theta^\lambda$ for $\lambda \in \Zb_r$ and that the following holds (Section~\ref{sec:arftft} and Theorem~\ref{thm:arftft}):

\begin{theorem}\label{intro:1}
Let $\Sigma$ be an $r$-spin surface of genus $g$
with $b$ ingoing boundary components of $r$-spin structures $\lambda_1,\dots,\lambda_b \in \Zb_r$ and no outgoing boundary components. Then
\begin{align}
	\funZ_{\Cl(1)}(\Sigma)(\theta^{\lambda_1} \otimes \cdots \otimes \theta^{\lambda_b}) = 2^{1-g} \, (-1)^{\Arf(\Sigma)} \ ,
\end{align}
where $\Arf(\Sigma) \in \Zb_2$ is the Arf-invariant of the $r$-spin structure of $\Sigma$ as defined in 
\cite{Randal:2014rs,Geiges:2012rs}.
\end{theorem}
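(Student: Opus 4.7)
The plan is to compute $Z_{\Cl(1)}(\Sigma)(\theta^{\lambda_1} \otimes \cdots \otimes \theta^{\lambda_b})$ explicitly on a convenient marked PLCW-decomposition of $\Sigma$ and then match the resulting combinatorial expression with the Arf invariant of \cite{Randal:2014rs,Geiges:2012rs}.

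First, I would pick a minimal PLCW-decomposition of $\Sigma$: for $g+b\ge 1$, a single $(4g+3b)$-gon with appropriately identified edges, as mentioned in Section~\ref{sec:PLCW-dec}. I would fix orientations, a marked edge of the face, and edge indices $s_e \in \Zb_r$ satisfying the consistency condition of Section~\ref{sec:combsub}. Since the theorem assumes $r$ even, the Clifford algebra $\Cl(1)=k\oplus k\theta$ — whose Nakayama automorphism is $N(\theta)=-\theta$, hence $N^2=\id$ — is admissible input for the state-sum construction of Theorem~\ref{thm:tft}.

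Second, I would carry out the state-sum contraction from Theorem~\ref{thm:tft}. Because $\Cl(1)$ is only $2$-dimensional and $\Zb_2$-graded, each edge tensor, each Nakayama insertion $N^{s_e}$, and each face contribute very explicit matrices in the basis $\{1,\theta\}$. I expect the contraction to split as a product of two factors: a normalisation factor coming from the window element and the coevaluations (producing $2^{1-g}$, by the same counting that controls the 2-spin state-sum in \cite{Barrett:2013sp,Novak:2014sp}), times an overall sign factor determined by how the $\Zb_2$-grading interacts with the Nakayama insertions along the identified edges of the polygon. Separately, the Frobenius rules will force that the only non-vanishing contributions come from the summand $\theta^{\lambda_1+\cdots+\lambda_b}$, which is consistent with the theorem and the assumption of no outgoing boundary.

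The core step — and the main obstacle — is identifying the resulting sign as $(-1)^{\Arf(\Sigma)}$. I expect the state sum to yield a sign of the form $(-1)^{q(s)}$ where $q$ is a $\Zb_2$-quadratic function of the edge indices $s_e$ (mod $2$) computed locally from the polygon combinatorics. Since $Z_{\Cl(1)}$ is a well-defined functor, this sign depends only on the $r$-spin structure and not on the PLCW-presentation. To identify $q$ with the Arf quadratic form, I would first express the Arf invariant of \cite{Randal:2014rs,Geiges:2012rs} in the combinatorial language of Section~\ref{sec:combsub} (using the holonomies around a collection of loops that give a symplectic basis of $H_1(\Sigma;\Zb_2)$), and then check that $q$ assigns the same $\Zb_2$-values on these loops. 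A convenient route is to verify agreement on two generating cases: the torus with its four $r$-spin structures (where both $Z_{\Cl(1)}$ and $\Arf$ can be evaluated by hand) and the disk/cylinder boundary contributions; then use the gluing/multiplicativity properties of both sides — functoriality for $Z_{\Cl(1)}$ and the additive behaviour of the Arf invariant under connected sum — to extend to arbitrary genus and boundary number. Finally, tracking the boundary contributions through the cylinder yields precisely the insertion $\theta^{\lambda_1}\otimes\cdots\otimes\theta^{\lambda_b}$, giving the stated equality.
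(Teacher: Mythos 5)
Your proposal follows essentially the same route as the paper: evaluate the state sum on the single $(4g+3b)$-gon decomposition to get $2^{1-g}$ times an explicit quadratic sign in the edge indices (Propositions~\ref{prop:tftsgb} and~\ref{prop:arftft}), express the holonomies of a standard system of curves combinatorially (Lemma~\ref{lem:indexhol}, Corollary~\ref{cor:indexhol}), and compare with Definition~\ref{def:arfinv}. The only divergence is your suggested generators-and-gluing check for the final identification; the paper instead matches the two quadratic expressions directly by computing $\zeta(a_i)=s_i$, $\zeta(b_i)=t_i$, $\zeta(c_j)=u_j-u_b+1$, $\zeta(\partial_j)=1-\lambda_j$, which avoids having to establish additivity of the Arf invariant under gluing of bordisms with boundary.
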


By construction, $\funZ_{\Cl(1)}(\Sigma)$ is invariant under the action of the mapping class group of $\Sigma$. Thus the above theorem also proves that the 
$r$-spin 
Arf-invariant is constant on mapping class group orbits, a fact already shown in \cite{Randal:2014rs,Geiges:2012rs} by different means.
For usual spin structures, so $r=2$, the fact that a spin-TFT can compute the Arf-invariant 
(incidentally, for the same algebra) was already noticed in 
\cite{Moore:2006db,Gunningham:2016sph,Barrett:2013sp,Gaiotto:2016spin}.
{}From this point of view Theorem~\ref{intro:1} is
not surprising as an $r$-spin structure for even $r$ also defines a 2-spin structure, and this correspondence is compatible with the Arf-invariant.
A slightly different notion of an Arf-invariant for $r$-spin structures is given in 
\cite[Def.\,5.1]{Natanzon:2004harf}.

\subsubsection*{Structure of this paper}
This paper is organised as follows. In Section~\ref{sec:comb} 
we describe the combinatorial model for $r$-spin structures and state its main properties. 
In Section~\ref{sec:tft} we use this model to give a state-sum construction of $r$-spin TFTs, 
and we compute the value of these TFTs on several bordisms as an example. 
In Section~\ref{sec:arf} we show that for $r$ even, 
the $r$-spin state-sum TFT for the two-dimensional Clifford algebra computes the $r$-spin Arf-invariant.
Finally, in Appendix~\ref{app:novak} we relate the description of $r$-spin structures in terms of PLCW-decompositions that we use here to the triangulation-based model of \cite{Novak:2015phd}. We furthermore give the proofs of those properties of the combinatorial model and of $r$-spin state-sum TFTs which require the triangulation-based model and have been omitted in the main text.

\subsubsection*{Acknowledgments}

We would like to thank
	Nils Carqueville,
	Tobias Dyckerhoff,
	Jan Hesse,
	Ehud Meir,
	Sebastian Novak,
	Louis-Hadrien Robert,
	Nick Salter,
	Walker Stern and
	Lukas Woike
for helpful discussions and comments. LS was supported by the 
DFG Research Training Group 1670 ``Mathematics Inspired by String Theory and Quantum Field Theory''.

\newpage

\section{Combinatorial description of r-spin surfaces}\label{sec:comb}

In this section we present the combinatorial model for of $r$-spin structures and state its properties. We start by reviewing the definition of an $r$-spin structure (Section~\ref{sec:rspinbord}) and of the decomposition of surfaces we will use (Section~\ref{sec:PLCW-dec}). The main results in this section are the bijection of the combinatorial data modulo an appropriate equivalence relation and isomorphism classes of $r$-spin structures (Theorem~\ref{thm:rscomb} in Section~\ref{sec:combsub}) and the counting of these isomorphism classes for compact connected surfaces (Proposition~\ref{prop:sigmagb} in Section~\ref{sec:sigmagb}).

\subsection{$r$-spin surfaces}\label{sec:rspinbord}

Here we recall the definition of $r$-spin structures and of related notions, following \cite{Novak:2015phd}.
Denote by $GL_2^+(\Rb)$ the set of real $2{\times}2$ matrices of positive determinant, and
let $p_{GL}^r:\widetilde{GL}_2^r\to GL_2^+(\Rb)$ be the $r$-fold connected cover
for $r\in\Zb_{>0}$ and the universal cover for $r=0$.
Note that in both cases the fibres are isomorphic to $\Zb_r=\Zb/r\Zb$.
By a \textsl{surface} we mean an oriented two-dimensional smooth manifold. For a surface $\Sigma$ we denote by $F_{GL^+}\Sigma\to\Sigma$ the oriented frame bundle over $\Sigma$
(``oriented'' means that orientation on the tangent space induced by the frame agrees with that of $\Sigma$).

\begin{definition}\label{def:rspin-basics}
\begin{enumerate}
	\item An \textsl{$r$-spin structure} on a surface	
$\Sigma$
is a pair $(\eta,p)$, where $\eta:P_{\widetilde{GL}}\Sigma\to\Sigma$ is a 
principal $\widetilde{GL}_2^r$-bundle and $p:P_{\widetilde{GL}}\Sigma\to F_{GL^+}\Sigma$
is a bundle map intertwining the $\widetilde{GL}_2^r$- and $GL_2^+$-actions
on $P_{\widetilde{GL}}\Sigma$ and $F_{GL^+}\Sigma$ respectively.
\item
	An \textsl{$r$-spin surface} is a surface together with an $r$-spin structure.
\item\label{def:rspin-basics.3}
A \textsl{morphism of $r$-spin surfaces} $\tilde{f}:\Sigma\to\Sigma'$ is a
bundle map between the $r$-spin surfaces,
such that the underlying map of surfaces $f$ is a local diffeomorphism, and
such that the diagram 
\begin{equation}
  \begin{tikzcd}[column sep=large, row sep=large]
	  P_{\widetilde{GL}}\Sigma \rar{\tilde{f}} \dar[swap]{p} & P_{\widetilde{GL}}\Sigma'  \dar{p'} \\
	  F_{GL}\Sigma \rar{df_*} \dar  & F_{GL}\Sigma' \dar \\
	\Sigma \rar{f}&  \Sigma'
  \end{tikzcd}\label{eq:def:morphism-of-r-spin-surfaces}
\end{equation}
commutes, where $df_*$ denotes the induced map from the derivative of $f$.
\item\label{def:rspin-basics.4}
A \textsl{morphism of $r$-spin structures over $\Sigma$}
is a morphism of $r$-spin surfaces
whose  underlying map of surfaces is the
identity on $\Sigma$. 
We write
\begin{equation}
  \Rc^r(\Sigma)
\end{equation}
for the set of isomorphism classes of $r$-spin structures on $\Sigma$.
\end{enumerate}
\end{definition}

Note that $p:P_{\widetilde{GL}}\Sigma\to F_{GL^+}\Sigma$ is
a $\Zb_r$-principal bundle ($r\in\Zb_{\ge0}$).
Also, morphisms of $r$-spin structures are always isomorphisms as they are maps of principal bundles.
A \textsl{diffeomorphism of $r$-spin surfaces} is
a morphism of $r$-spin surfaces with a diffeomorphism as
underlying map of surfaces. 
Let us denote by 
	\begin{align}
		\Dc^r(\Sigma)
		\label{eq:diffeoclass}
	\end{align}
	the diffeomorphism classes of
	$r$-spin surfaces with underlying surface $\Sigma$.
Note that by construction we have a surjection
	\begin{align}
		\Rc^r(\Sigma)\twoheadrightarrow\Dc^r(\Sigma)\ ,
		\label{eq:surjection-of-classes}
	\end{align}
	given by passing to orbits under the action of the mapping class group of $\Sigma$ acting on $\Rc^r(\Sigma)$.
	As we shall see, this surjection is almost never injective.	

\medskip

Even though we do not need it in the rest of the paper, let us mention that a 0-spin structure is the same as a framing. 
A \textsl{framing of $\Sigma$} is a homotopy class of trivialisations of the oriented frame bundle over $\Sigma$. Let $T(\Sigma)$ denote the set of framings of $\Sigma$. We have:

\begin{proposition} \label{prop:framing}
There is a bijection $T(\Sigma) \xrightarrow{\ \sim\ } \Rc^0(\Sigma)$.
\end{proposition}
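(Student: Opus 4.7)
The proof plan rests on one crucial observation: since $GL_2^+(\Rb)$ deformation retracts onto $SO(2) \simeq S^1$, its universal cover $\widetilde{GL}_2^0$ deformation retracts onto $\Rb$ and is therefore contractible. This contractibility will do almost all the work: it implies that every principal $\widetilde{GL}_2^0$-bundle is trivializable, and that any two maps from $\Sigma$ into $\widetilde{GL}_2^0$ are homotopic. Both facts will be applied repeatedly below.

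The plan is to define $\Phi : T(\Sigma) \to \Rc^0(\Sigma)$ explicitly and then show $\Phi$ is bijective. Given a framing $f$, choose a representative section $s:\Sigma \to F_{GL^+}\Sigma$ (trivializations of a principal bundle correspond to sections). Set $P_s := \Sigma \times \widetilde{GL}_2^0$ with the obvious $\widetilde{GL}_2^0$-action, and define $p_s(x,g) := s(x) \cdot p_{GL}^0(g)$. This is a $0$-spin structure, and we set $\Phi(f) := [(P_s,p_s)]$. To see it is independent of the choice of $s$ in its homotopy class, suppose $s' = s \cdot g$ for some $g:\Sigma \to GL_2^+(\Rb)$ that is nullhomotopic; then $g$ lifts to a map $h:\Sigma \to \widetilde{GL}_2^0$ (nullhomotopic maps lift along any covering), and $(x,k) \mapsto (x, h(x)k)$ is a morphism of $0$-spin structures from $(P_{s'},p_{s'})$ to $(P_s,p_s)$.

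For injectivity of $\Phi$: suppose $(P_s,p_s) \cong (P_{s'},p_{s'})$ via some equivariant bundle map. Any such map has the form $(x,k)\mapsto (x,h(x)k)$ for $h:\Sigma\to \widetilde{GL}_2^0$, and compatibility with $p$ forces $s' = s \cdot (p_{GL}^0\circ h)$. Since $\widetilde{GL}_2^0$ is contractible, $h$ is nullhomotopic, hence so is $p_{GL}^0 \circ h$, so $s$ and $s'$ are homotopic sections and define the same framing. For surjectivity: given a $0$-spin structure $(P,p)$, contractibility of $\widetilde{GL}_2^0$ implies $P$ admits a global section $\sigma : \Sigma \to P$; then $s := p \circ \sigma$ is a section of $F_{GL^+}\Sigma$ and the map $(x,k)\mapsto \sigma(x)\cdot k$ is an isomorphism $(P_s,p_s) \xrightarrow{\sim} (P,p)$, showing $[(P,p)] = \Phi([s])$.

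There is no serious obstacle; the main thing to keep straight is the distinction between the two equivalence relations — homotopy of trivializations on the framing side and isomorphism in the sense of Definition~\ref{def:rspin-basics}\ref{def:rspin-basics.4} on the $0$-spin side — and to verify that the bijection respects them. Each check reduces to the single fact that a map $\Sigma \to GL_2^+(\Rb)$ lifts to $\widetilde{GL}_2^0$ if and only if it is nullhomotopic, which is immediate from contractibility of $\widetilde{GL}_2^0$.
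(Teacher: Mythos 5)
Your proof is correct and follows essentially the same route as the paper's: both directions hinge on the contractibility of $\widetilde{GL}_2^0$, used once to trivialise the principal bundle of a given $0$-spin structure (surjectivity) and once to see that gauge transformations lift to, respectively descend from, nullhomotopic maps (well-definedness and injectivity). The only cosmetic differences are that you phrase trivialisations as sections and verify injectivity/surjectivity directly instead of exhibiting the two-sided inverse $G$, and that you prove the homotopy-lifting step by hand where the paper cites the literature.
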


\begin{proof}
Take a framing and pick a representative trivialisation, i.e.\ an isomorphism of $GL_2^+$ principal bundles
$\varphi : F_{GL}\Sigma \xrightarrow{\ \sim\ }  GL_2^+\times\Sigma$.
Define
\begin{align}
	p_{\varphi} \,:=\,& \left[
	\widetilde{GL}_2^0 \times \Sigma
	\xrightarrow{p_{GL}^0\times\id_{\Sigma}}
	GL_2^+ \times \Sigma
	\xrightarrow{\varphi^{-1}}
	F_{GL}\Sigma
	\right] \ ,
	\nonumber \\
	\pi_{\varphi}\,:=\,& \left[
	\widetilde{GL}_2^0\times\Sigma
	\xrightarrow{p_{\varphi}}F_{GL}\Sigma\to\Sigma \right] \ .
\end{align}
Then $\rho_{\varphi}:=(\pi_{\varphi},p_{\varphi})$ is a 0-spin structure.
Changing $\varphi$ by a homotopy gives an isomorphic 0-spin structure
	\cite[Ch.\,4, Thm.\,9.9]{Husemoller:fb}.
This defines a map $F: T(\Sigma) \to \Rc^0(\Sigma)$.

Next we define a map in the opposite direction.
	Since $\widetilde{GL}_2^0$ is contractible, for any
	0-spin structure $\zeta=(\pi:P_{\widetilde{GL}}\Sigma\to\Sigma,p)$, 
	$\pi$ is a trivialisable $\widetilde{GL}_2^0$ principal bundle 
		\cite[Thm.\,12.2]{Steenrod:1951top}. 
	Let $\tilde{\phi}_{\zeta}:P_{\widetilde{GL}}\Sigma\to \widetilde{GL}_2^0\times\Sigma$
	denote such a trivialisation. Then there exists a unique morphism of
	principal $GL_2^+$ bundles $\phi_{\zeta}:F_{GL}\Sigma\to GL_2^+\times\Sigma$ such that
	\begin{equation}\label{eq:0-spin_frame_aux1}
	  \begin{tikzcd}[column sep=large, row sep=large]
		  P_{\widetilde{GL}}\Sigma \rar{\tilde{\phi}_{\zeta}} \dar[swap]{p} & 
		  \widetilde{GL}_2^0\times\Sigma  \dar{p_{GL}^0\times\id_{\Sigma}} \\
		  F_{GL}\Sigma \rar{\phi_{\zeta}} & 
	GL_2^+\times\Sigma
	  \end{tikzcd}
	\end{equation}
	commutes.
	Again by contractability, any two choices of trivialisations $\tilde\phi_\zeta$ are homotopic
	and so the corresponding $\phi_\zeta$ are homotopic, too. 
	By the same argument, different choices of representatives of isomorphism
	classes of $0$-spin structures give homotopic $\phi_{\zeta}$'s.
	This defines a map $G:   \Rc^0(\Sigma) \xrightarrow{\ \sim\ } T(\Sigma)$.

The two maps $F$ and $G$ are inverse to each other. Indeed, for $[\zeta] \in \mathcal{R}^0(\Sigma)$, the $0$-spin structure one obtains after constructing $F(G([\zeta]))$ is isomorphic to $\zeta$ via $\tilde\phi_\zeta$ as in \eqref{eq:0-spin_frame_aux1}, so that indeed $F(G([\zeta])) = [\zeta]$. Conversely, starting from a homotopy class of trivialisations $[\varphi] \in T(\Sigma)$, 
in computing $G(F([\varphi]))$ we see that in \eqref{eq:0-spin_frame_aux1} we can take $\tilde\phi_\zeta = \id$ and $\phi_\zeta = \varphi$, so that $G(F([\varphi])) = [\varphi]$.
\end{proof}

After this aside on framings, let us return to $r$-spin surfaces and give a basic example which will later serve to parametrise the boundary components of $r$-spin bordisms.

\begin{example}\label{ex:cx}
For $\kappa\in\Zb$ let $\Cb^{\kappa}$ denote the $r$-spin structure on $\Cb^{\times}$ given by 
the trivial principal $\widetilde{GL}_2^r$-bundle $\widetilde{GL}_2^r\times\Cb^{\times}$
and the map 
\begin{align}
	p^{\kappa}:\widetilde{GL}_2^r\times\Cb^{\times}&\to GL_2^+\times\Cb^{\times}\nonumber\\
	(g,z)&\mapsto(z^{\kappa}.p_{GL}^r(g),z)\ ,	
	\label{eq:cxproj}
\end{align}
where $z \in \Cb^{\times}$ acts on $M \in GL_2^+$ by
\begin{align}
	z.M=
	\begin{pmatrix}
		\mathrm{Re}z&-\mathrm{Im}z\\
		\mathrm{Im}z& \mathrm{Re}z\\
	\end{pmatrix} 
	M\ .
	\label{eq:caction}
\end{align}
	Since the $\widetilde{GL}_2^r$-action is from the right
	and $p_{GL}^r$ is a group homomorphism, 
	$p^\kappa$ indeed intertwines the 
	$\widetilde{GL}_2^r$- and $GL_2^+$-actions.
\end{example}
 
\begin{lemma}
[{\cite[Sec.\,3.4]{Novak:2015phd}}]
	$\Cb^{\kappa}$ and $\Cb^{\kappa'}$ are isomorphic $r$-spin structures
	if and only if $\kappa\equiv\kappa'\Mod{r}$.
	The map $\Zb_r \to \mathcal{R}^r(\mathbb{C}^\times)$, 
	$\kappa \mapsto [\Cb^{\kappa}]$ is a bijection.
	\label{lem:clambda}
\end{lemma}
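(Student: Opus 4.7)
The plan is to reduce both parts of the lemma to a lifting problem for the covering $p_{GL}^r:\widetilde{GL}_2^r\to GL_2^+$.

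First I would analyse morphisms $\tilde f:\Cb^\kappa\to\Cb^{\kappa'}$ over $\Cb^\times$. Because both underlying principal bundles are trivial, a $\widetilde{GL}_2^r$-equivariant map covering $\id_{\Cb^\times}$ must take the form $(g,z)\mapsto(f(z)g,z)$ for a smooth $f:\Cb^\times\to\widetilde{GL}_2^r$. Writing $R(z)=\left(\begin{smallmatrix}\mathrm{Re}\,z & -\mathrm{Im}\,z\\ \mathrm{Im}\,z & \mathrm{Re}\,z\end{smallmatrix}\right)$ for the multiplicative embedding $\Cb^\times\hookrightarrow GL_2^+$ appearing in \eqref{eq:caction}, the compatibility $p^{\kappa'}\circ\tilde f=p^{\kappa}$ together with the fact that $R$ is a group homomorphism reduces to the pointwise equation $p_{GL}^r(f(z))=R(z^{\kappa-\kappa'})$. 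Hence $\Cb^\kappa\cong\Cb^{\kappa'}$ over $\Cb^\times$ precisely when $z\mapsto R(z^{\kappa-\kappa'})$ admits a smooth lift through $p_{GL}^r$.

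The next step is the covering-space lifting criterion. The map $R$ restricts to the inclusion $S^1\hookrightarrow SO(2)\subseteq GL_2^+$ on the unit circle, and both $\Cb^\times$ and $GL_2^+$ deformation-retract onto $S^1$ and $SO(2)$ respectively by polar decomposition, so $R$ is a homotopy equivalence. Consequently $z\mapsto R(z^m)$ induces multiplication by $m$ on $\pi_1=\Zb$, while $(p_{GL}^r)_{\ast}\bigl(\pi_1(\widetilde{GL}_2^r)\bigr)=r\Zb\subseteq\Zb$ (read as $\{0\}$ when $r=0$). A lift therefore exists iff $r\mid\kappa-\kappa'$, establishing the first assertion.

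For the bijectivity of $\Zb_r\to\mathcal{R}^r(\Cb^\times)$, $\kappa\mapsto[\Cb^\kappa]$, only surjectivity remains, and I would prove it by a normal-form argument. Given an arbitrary $r$-spin structure $(\eta,p)$ on $\Cb^\times$, I would fix the canonical trivialisation $F_{GL^+}\Cb^\times\cong GL_2^+\times\Cb^\times$ coming from the global frame $(\partial_x,\partial_y)$, and use that $\eta$ admits a global section because $\Cb^\times\simeq S^1$ and $\widetilde{GL}_2^r$ is connected. In any such trivialisation, the analysis of the first step produces a smooth $h:\Cb^\times\to GL_2^+$ with $p(g,z)=(h(z)\,p_{GL}^r(g),z)$; changing the section by $f:\Cb^\times\to\widetilde{GL}_2^r$ replaces $h$ by $h\cdot(p_{GL}^r\circ f)$, so the class of $h$ in $\pi_1(GL_2^+)/r\Zb=\Zb_r$ is section-independent. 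Picking $\kappa$ representing this class, $R(z^\kappa)h(z)^{-1}$ has winding number in $r\Zb$ and, by the first part, lifts to some $\tilde f:\Cb^\times\to\widetilde{GL}_2^r$; using $\tilde f$ as a change of section makes $h(z)=R(z^\kappa)$ hold pointwise, so $(P,p)\cong\Cb^\kappa$.

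The main technical hurdle will be this last upgrade, from an equality of winding-number classes in $\Zb_r$ to the pointwise equality $h=R(z^\kappa)$ needed for a genuine isomorphism of $r$-spin structures; this is precisely where the lifting criterion of the first part is reused. The case $r=0$ is handled uniformly, as then $r\Zb=\{0\}$, $\Zb_0=\Zb$, and $\widetilde{GL}_2^0$ remains connected, so all three steps go through unchanged.
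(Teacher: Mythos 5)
Your proof is correct: the paper does not actually prove this lemma but imports it from Novak's thesis (Sec.\,3.4), and your argument — reducing isomorphisms over $\Cb^\times$ to the problem of lifting $z\mapsto R(z^{\kappa-\kappa'})$ through the covering $p_{GL}^r$, then getting surjectivity by trivialising an arbitrary $r$-spin structure and normalising the resulting clutching map $h$ by its winding number modulo $r$ — is precisely the standard one given there. The only blemish is the order of factors in the final step: since a change of section replaces $h$ by $h\cdot(p_{GL}^r\circ f)$ and $GL_2^+$ is non-abelian, the map you must lift is $z\mapsto h(z)^{-1}R(z^\kappa)$ rather than $R(z^\kappa)h(z)^{-1}$, but as winding numbers are insensitive to this the argument is unaffected.
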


In the case that $r>0$, it will be convenient to fix once and for all a set of representatives of $\Zb_r$ in $\Zb$, say $\{0,1,\dots,r-1\}$, and to agree that for $\lambda \in \Zb_r$, $\Cb^\lambda$ stands for $\Cb^\kappa$, with $\kappa \in \Zb$ the chosen representative for $\lambda$.

\begin{notation}
For an $r$-spin surface $\Sigma$, 
by abuse of notation we will often use the same symbol $\Sigma$ to denote its underlying surface. That is, $\Sigma$ stands for the triple $\Sigma,\eta,p$ from Definition~\ref{def:rspin-basics}\,(1).
\end{notation}

A \textsl{collar} is an open neighbourhood of $\Sb$ in $\Cb^{\times}$.
An \textsl{ingoing} (resp.\ \textsl{outgoing}) \textsl{collar} is the intersection of a collar with
the set $\setc*{z\in\Cb^{\times}}{|z|\ge 1}$
(resp.\ $\setc*{z\in\Cb^{\times}}{|z|\le 1}$).
A \textsl{boundary parametrisation} of a surface $\Sigma$ is:
\begin{enumerate}
\item 
A disjoint decomposition $B_\mathrm{in} \sqcup B_\mathrm{out} = \pi_0(\partial\Sigma)$ (the in- and outgoing boundary components). $B_\mathrm{in}$ and/or $B_\mathrm{out}$ are allowed to be empty.
\item
A collection of ingoing collars $U_b$, $b \in B_\mathrm{in}$, and outgoing collars  $V_c$, $c \in B_\mathrm{out}$, together with a pair of orientation preserving embeddings
\begin{align}
	\phi_\mathrm{in}:\bigsqcup_{b\in B_\mathrm{in}}U_b\hookrightarrow\Sigma\hookleftarrow
	\bigsqcup_{c\in B_\mathrm{out}}V_c:\phi_\mathrm{out} \ .
	\label{eq:bdrparam}
\end{align}
We require that for each $b$, the restriction $\phi_\mathrm{in}|_{U_b}$ maps $\mathbb{S}^1$ diffeomorphically to the connected component $b$ of $\partial\Sigma$, and analogously for $\phi_\mathrm{out}|_{V_c}$.
\end{enumerate}
An \textsl{$r$-spin boundary parametrisation} of an $r$-spin surface $\Sigma$ is:
\begin{enumerate}
	\item A boundary parametrisation of the underlying surface $\Sigma$ as above; 
		we use the same notation as in \eqref{eq:bdrparam}.
		\label{part:bdryparam}
	\item A pair of maps 
	fixing the restriction of the $r$-spin structure to the in- and outgoing boundary components	
		\begin{align}
			\lambda:B_{in}&\to\Zb_r        & \text{and}&&\mu:B_{out}&\to\Zb_r.
			\label{eq:collarmaps}\\
			             b&\mapsto\lambda_b&           &&          c&\mapsto\mu_c\nonumber
		\end{align}
	\item A pair of morphisms of $r$-spin surfaces
	which parametrise the in- and outgoing boundary components by  collars with $r$-spin structure,	
		\begin{align}
			\varphi_\mathrm{in}:\bigsqcup_{b\in B_\mathrm{in}}
			U_b^{\lambda_b}
			\hookrightarrow\Sigma\hookleftarrow
			\bigsqcup_{c\in B_\mathrm{out}}
			V_c^{\mu_c}
			:\varphi_\mathrm{out} \ .
			\label{eq:rsbdrparam}
\end{align} 
Here, $U_b^{\lambda_b}$ is the restriction of $\Cb^{\lambda_b}$ to the ingoing collar $U_b$, and analogously $V_c^{\mu_c} := \Cb^{\mu_c}|_{V_c}$. 
The maps of surfaces underlying $\varphi_{\mathrm{in}/\mathrm{out}}$ 
are required to be the maps $\phi_{\mathrm{in}/\mathrm{out}}$ in \eqref{eq:bdrparam} from 
Part~\ref{part:bdryparam}.
\end{enumerate}
Note that by Lemma~\ref{lem:clambda}, the maps $\lambda,\mu$ in part 2 are not extra data, but are uniquely determined by the $r$-spin surface $\Sigma$
and the boundary parametrisation.

For diffeomorphisms between $r$-spin surfaces with parametrised boundary we only require that they  respect  germs of the boundary parametrisation. In more detail, let $\Sigma$ be as in \eqref{eq:rsbdrparam} and let
\begin{align}
	\psi_{in}:\bigsqcup_{d\in B'_{in}}
	P_d^{\rho_d}
	\hookrightarrow\Xi\hookleftarrow
	\bigsqcup_{e\in B'_{out}}
	Q_e^{\sigma_e}
	:\psi_{out} 
	\label{eq:rsbdrparam-2nd}
\end{align} 
be another $r$-spin surface with boundary parametrisation. 
A \textsl{diffeomorphism of $r$-spin surfaces with boundary parametrisation} $\Sigma \to \Xi$ is an $r$-spin diffeomorphism $f : \Sigma \to \Xi$ subject to the following compatibility condition. Let $b \in B_\mathrm{in}^\Sigma$ be an ingoing boundary component of $\Sigma$ and let $f_*(b) \in \pi_0(\partial\Xi)$ be its image under $f$. We require that $f_*(b) \in B_\mathrm{in}^\Xi$ and that $\lambda_b = \rho_{f_*(b)}$. Furthermore, there has to exist an ingoing collar $C$ contained in both $U_b$ and $P_{f_*(b)}$ such that the diagram
\begin{equation}
	\begin{tikzcd}[row sep=small]
		& U_b^{\lambda_b} \ar[hookrightarrow]{r}{\varphi_\mathrm{in}} & \Sigma \ar{dd}{f} \\
		C^{\lambda_b} \ar[hookrightarrow]{ru} \ar[hookrightarrow]{rd} \\
		& P_{f_*(b)}^{\rho_{f_*(b)}} \ar[hookrightarrow]{r}{\psi_\mathrm{in}} & \Xi
	\end{tikzcd}
\end{equation}
of $r$-spin morphisms
commutes. An analogous condition has to hold for each outgoing boundary component $c \in B_\mathrm{out}$.

\medskip

By an \textsl{$r$-spin object} we mean a pair $(X,\rho)$ 
consisting of a finite set $X$ and a map 
$\rho : X \to \Zb_r$, $x \mapsto 
\rho_x$. 
Below we will construct a category whose objects are $r$-spin objects,
and whose morphisms are certain equivalence classes of $r$-spin surfaces, which we turn to now.

\begin{definition}
Let $(X,\rho)$ and $(Y,\sigma)$ be two $r$-spin objects.
An \textsl{$r$-spin bordism from $(X,\rho)$ to $(Y,\sigma)$} is a compact
$r$-spin surface $\Sigma$ with boundary parametrisation  as in \eqref{eq:rsbdrparam} together with bijections 
$\beta_\mathrm{in} : X \xrightarrow{~\sim~} B_\mathrm{in}$ and $\beta_\mathrm{out} : Y \xrightarrow{\ \sim\ } B_\mathrm{out}$ such that
\begin{equation} \label{eq:boundary-label-compatible}
\begin{tikzcd}
	X \ar{rr}{\beta_\mathrm{in}} \ar{dr}[swap]{\rho} && 
	B_\mathrm{in} \ar{dl}{\lambda} \\
	& \Zb_r&
\end{tikzcd}
\quad \text{and} \quad
\begin{tikzcd}
	Y \ar{rr}{\beta_\mathrm{out}} \ar{dr}[swap]{\sigma} && 
	B_\mathrm{out} \ar{dl}{\mu} \\
	& \Zb_r&
\end{tikzcd}
\end{equation}
commute.
We will often abbreviate an $r$-spin bordism $\Sigma$ from $(X,\rho)$ to $(Y,\sigma)$ as $\Sigma : \rho \to \sigma$. 
\end{definition}

Given $r$-spin bordisms $\Sigma : \rho \to \sigma$ and $\Xi : \sigma \to \tau$, the \textsl{glued $r$-spin bordism} $\Xi \circ \Sigma : \rho \to \tau$ is defined as follows. Denote by $Y$ the source of $\sigma$, i.e.\ $\sigma : Y \to \Zb_r$. 
For every $y \in Y$, the boundary component $\beta^\Sigma_\mathrm{out}(y)$ of $\Sigma$ is glued to the boundary component  $\beta^\Xi_\mathrm{in}(y)$ of $\Xi$ using the $r$-spin boundary parametrisations $\varphi^\Sigma_\mathrm{out}$ and $\varphi^\Xi_\mathrm{in}$. 
The diagrams in~\eqref{eq:boundary-label-compatible} ensure that the $r$-spin structures on the corresponding  collars 
are restrictions of the same $r$-spin structure on $\Cb^\times$.

Two $r$-spin bordisms between the same $r$-spin objects,
$\Sigma,\Sigma' : (X,\rho) \to (Y,\sigma)$ are called \textsl{equivalent}
if there is a diffeomorphism $f: \Sigma \to \Sigma'$ of $r$-spin surfaces with boundary parametrisation such that
with $f_* : \pi_0(\partial\Sigma) \to \pi_0(\partial \Sigma')$,
\begin{equation}
	\begin{tikzcd}[row sep=small]
		& B_\mathrm{in} \ar{dd}{f_*} \\
		X \ar{ru}{\beta_{\mathrm{in}}} \ar{rd}[swap]{\beta'_{\mathrm{in}}} \\
		  & B'_\mathrm{in}
	\end{tikzcd}
	\qquad \text{and} \qquad
	\begin{tikzcd}[row sep=small]
		B_\mathrm{out} \ar{dd}[swap]{f_*} \\
		& Y \ar{lu}[swap]{\beta_{\mathrm{out}}} \ar{ld}{\beta'_{\mathrm{out}}} \\
		  B'_\mathrm{out}
	\end{tikzcd}
\end{equation}
commute. 
Let $[\Xi]:\sigma\to\tau$ and $[\Sigma]:\rho\to\sigma$ be equivalence classes of $r$-spin bordisms.
The composition $[\Xi]\circ[\Sigma]:=[\Xi\circ\Sigma]:\rho\to\tau$ is well defined, that is independent of the choice of representatives $\Xi$, $\Sigma$ of the classes to be glued.
In the following we will by abuse of notation write the same symbol $\Sigma$ for an $r$-spin bordism $\Sigma$ and its equivalence class $[\Sigma]$.

\begin{definition}
	The \textsl{category of $r$-spin bordisms} $\Bord{r}$ has $r$-spin objects
as objects and equivalence classes of $r$-spin bordisms as morphisms.
\end{definition}

$\Bord{r}$ is a symmetric monoidal category with tensor product 
on objects and morphisms given by disjoint union.
The identities and the symmetric structure are given by $r$-spin cylinders with appropriately parametrised boundary.

\subsection{PLCW decompositions}\label{sec:PLCW-dec}

\begin{figure}[b!]
	\centering
	\def\svgwidth{16cm}
	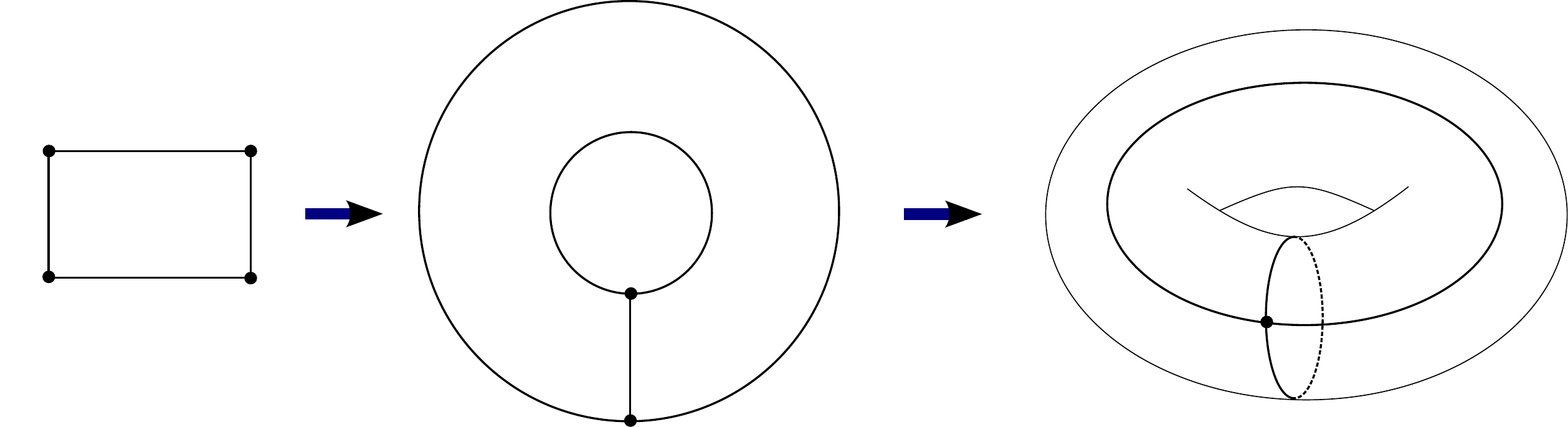
	\caption{Glueing a torus from a rectangle. 
		Each step is a regular cell map and each 
		generalised cell decomposition is a PLCW decomposition.}
	\label{fig:torus}
\end{figure}

In Section~\ref{sec:combsub} we will use a cell decomposition to combinatorially encode 
$r$-spin structures on surfaces, and
in Section~\ref{sec:state-sum-constr} we will use this description to build an $r$-spin TFT.
For explicit calculations it is helpful to keep the number of faces and edges to a minimum. 
The notion of a PLCW decomposition from \cite{Kirillov:2012pl}, and which we review in this section, is well suited for such calculations.
For example, there is a PLCW decomposition of a torus consisting of
1 face, 2 edges and 1 vertex, see Figure~\ref{fig:torus}.
For comparison, using simplicial sets would require at least
2 faces, 3 edges and 1 vertex; 
using simplicial complexes 
(i.e.\ triangulations, as in \cite{Novak:2015phd}) 
would require at least
14 faces, 21 edges and 7 vertices (see e.g.\ \cite{Lutz:2005fw}).

Now we turn to the definitions following \cite{Kirillov:2012pl}.
Let $C\subset\Rb^N$ be a compact set,
let $\mathring{C}$ denote its interior and
let $\mathbullet{C}:=C\setminus\mathring{C}$ denote its boundary.
Let $B^N=\left[ -1,1 \right]^N\in\Rb^N$ 
denote the closed $N$-ball,
or rather a piece-wise linear (PL for short) version thereof. 
Then $\mathbullet{B}^N=S^{N-1}$ is the 
(PL-version of the)
$(N-1)$-sphere.
A PL
map $\varphi:C\to\Rb^M$ is called a \textsl{regular map} 
if $\varphi|_{\mathrm{Int}(C)}$ is injective.
A compact subset $C\subset\Rb^N$ is a \textsl{generalised $n$-cell} 
(or simply \textsl{cell}), if $\mathring{C}=\varphi(\mathring{B}^n)$ and
$\mathbullet{C}=\varphi(\mathbullet{B}^n)$ for a regular map $\varphi:B^n\to C$,
which we call a \textsl{characteristic map of $C$}.
A \textsl{generalised cell decomposition} is a finite collection of cells
such that the interiors of cells do not intersect and the
boundary of any cell is a union of cells.
Examples are shown in Figure~\ref{fig:torus} and in Figure~\ref{fig:gcc}.
We denote the \textsl{$n$-skeleton} of $K$ by $K^n$,
which is the union of the \textsl{set of $k$-cells} $K_k$ with $k\le n$, and 
we define the \textsl{dimension} $\dim{K}$ of $K$ to be the highest integer $n$ for which the
set of $n$-cells is nonempty.
We denote the set of boundaries of an $n$-cell $C\in K_n$ by $\partial(C)\subset K_{n-1}$.
A \textsl{regular cell map} $f:L\to K$ between 
generalised cell decompositions $L$ and $K$
is a piecewise linear 
map $f:\bigcup_{C\in L}C\to\bigcup_{D\in K}D$
such that for every $C\in L$ with characteristic map $\varphi$
there is a cell $D=f(C)\in K$ for which $f\circ\varphi$ is a characteristic map.
An example of a regular cell map is shown in Figure~\ref{fig:torus},
a non-example is shown in Figure~\ref{fig:gcc}\,$b)$.

\begin{definition}
	A \textsl{PLCW decomposition} $K$ is a generalised cell decomposition of dimension $n$
	such that if $n>0$
	\begin{itemize}
		\item $K^{n-1}$ is a PLCW decomposition and
		\item for any $n$-cell $A\in K_n$ with characteristic map $\varphi$
			there is a PLCW decomposition $L$ of $S^{n-1}$, such that 
			$\varphi|_{S^{n-1}}:L\to K^{n-1}$ is a regular cell map.
	\end{itemize}
\end{definition}

\begin{figure}[tb]
	\centering
	\def\svgwidth{16cm}
	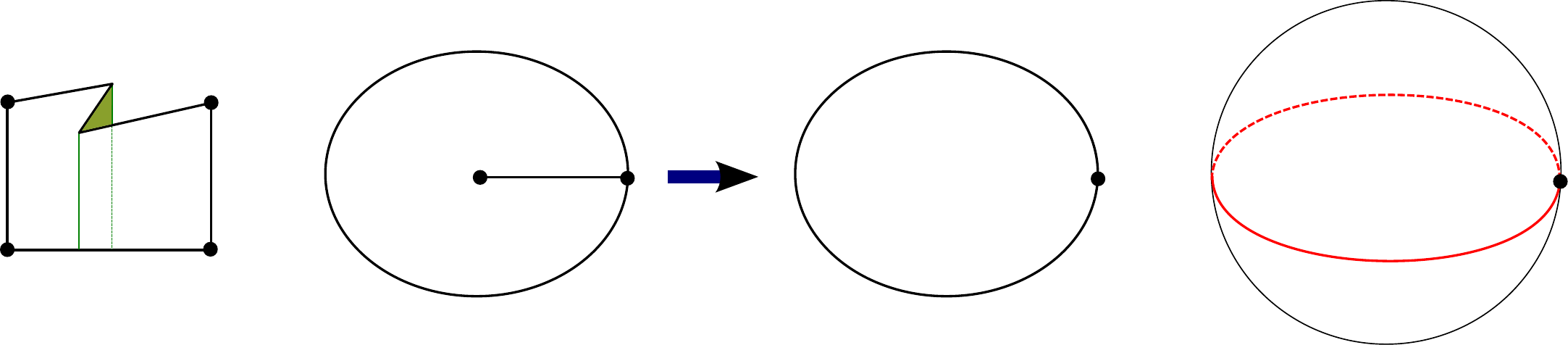
	\caption{$a)$ A generalised cell decomposition which is not a PLCW decomposition.
		There are one 2-cell, four 1-cells and four 0-cells. 
		One can visualise it by folding a paper and glueing it only along the bottom edge.
		$b)$ A triangle with two sides identified and a 1-gon, 
		both PLCW decompositions. 
		The map between them is not a regular cell map
		as the edge in the middle has no image.
		$c)$ A PLCW decomposition of a sphere into two faces, 
		one edge (red line) and one vertex.}
	\label{fig:gcc}
\end{figure}

Examples of PLCW decompositions are shown in Figure~\ref{fig:torus}, Figure~\ref{fig:gcc} $b)$ and $c)$.
A generalised cell decomposition which is not a PLCW decomposition 
is shown in Figure~\ref{fig:gcc} $a)$.
Each PLCW decomposition can be related by a series of local elementary moves (cf.\ Section~\ref{sec:eltmove} below),
and each PLCW decomposition can be refined to a simplicial complex 
\cite[Thm.\,6.3]{Kirillov:2012pl}.
For more details see~\cite[Sec.\,6--8]{Kirillov:2012pl}.

{}From now on we specialise to 2 dimensional PLCW decompositions.
Let $\Sigma$ be a compact surface with
a PLCW decomposition $\Sigma_2$, $\Sigma_1$, $\Sigma_0$.
We call these sets \textsl{faces}, \textsl{edges} and \textsl{vertices} respectively;
one can think of faces as $n$-gons with $n\ge1$.
For $g+b\ge1$, PLCW decompositions also allow for a decomposition of 
any compact connected surface $\Sigma_{g,b}$ of genus $g$ and with $b$ boundary components 
into a single face which is a $(4g+3b)$-gon, see Section~\ref{sec:sigmagb}.

\medskip

To apply PLCW decompositions to smooth manifolds, we can use that 
a PLCW decomposition can be refined to a simplicial complex, 
and that PL cell maps for a simplicial complex can be approximated by smooth maps, 
giving smooth manifolds \cite[Sec.\,10]{Munkres:1966dt}.

\subsection{Combinatorial description of $r$-spin structures}\label{sec:combsub}

In this section we extend the combinatorial description of $r$-spin structures in \cite{Novak:2015phd}, 
which uses a triangulation of the underlying surface, to PLCW decompositions.
We will only consider PLCW decompositions where the boundary components consist of a single vertex and a single edge. 
In the following section we explain how this combinatorial model relates to the description of $r$-spin structures in terms of holonomies of curves.

Let $\Sigma$ be a surface with parametrised boundary, with a PLCW decomposition,
with a marking of one edge of each face and an orientation of each edge.
We do not require that the orientation of the boundary edges
corresponds to the orientation of the boundary components, 
but we orient the faces according to the orientation of the surface.
This induces an ordering of the edges of each face, the starting edge being the marked one, see Figure~\ref{fig:clockwisevertex}.
By an \textsl{edge index assignment} we mean a map $s:\Sigma_1\to\Zb_r$, $e\mapsto s_e$.

\begin{definition}\label{def:marking-PLCW}
	We call an assignment of edge markings, edge orientations
	and edge indices a \textsl{marking} of a PLCW decomposition
	and a PLCW decomposition together with a marking a \textsl{marked PLCW decomposition}.
\end{definition}

\begin{figure}[tb]
	\centering
	\def\svgwidth{6cm}
	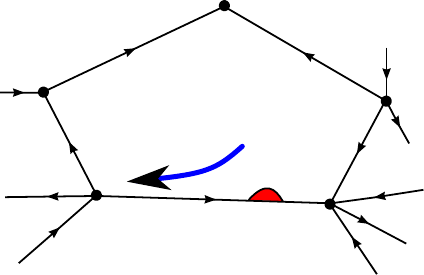
	\caption{Figure of a face with adjacent edges and vertices in a marked PLCW decomposition.
		The orientation of the face is that of the paper plane,
		the orientation of the edges is 
			indicated by an arrow on them.
	The half-dot indicates the marked edge of the face the half-dot lies in.
	The arrow in the middle
	shows the clockwise direction along the marked edge $e$ and 
	$v$ is the vertex sitting on the boundary of $e$ in clockwise direction.
	Note that the clockwise vertex $v$ of the edge $e$ is determined by the orientation of the face and
	not by the orientation of the edge $e$.}
	\label{fig:clockwisevertex}
\end{figure}

For a vertex $v\in\Sigma_0$ let $D_v$ be 
the number of faces whose marked edge has $v$ as its boundary vertex in clockwise direction (with respect to the orientation of the face),
as shown in Figure~\ref{fig:clockwisevertex}.
Let $\partial^{-1}(v)\subset\Sigma_1$ denote the edges
whose boundary contain $v$: 
\begin{align}
	\partial^{-1}(v):=\setc*{e\in\Sigma_1}{v\in\partial(e)}\ .
	\label{eq:edges-of-vertex}
\end{align}
The orientation of an edge gives a starting and an ending vertex, which might be the same.
Let $N_v^{\mathrm{start}}$ (resp. $N_v^{\mathrm{end}}$) be the number of edges starting (resp.\ ending) at the vertex $v$
and let 
\begin{align}
\label{eq:Nv=Nvstart+Nvend}
	N_v=N_v^{\mathrm{start}}+N_v^{\mathrm{end}} \ .
\end{align}
We note that an edge which starts and ends at $v$ contributes 1 to both $N_v^{\mathrm{start}}$ and to $N_v^{\mathrm{end}}$.
For every edge $e\in\partial^{-1}(v)$ let
\begin{align}
	\hat{s}_e=
	\begin{cases}
		-1&\text{ if $e$ starts and ends at $v$,}\\
		s_e&\text{ if $e$ is pointing out of $v$,}\\
		-1-s_e&\text{ if $e$ is pointing into $v$.}
	\end{cases}
	\label{eq:modedgeind}
\end{align}
Recall the maps $\lambda:B_{in}\to\Zb_r$ and 
$\mu:B_{out}\to\Zb_r$ from \eqref{eq:collarmaps}, as well as
our convention that we only consider PLCW decompositions with exactly one vertex and one edge on each boundary component.
For a vertex $u$ on a boundary component let
us write by slight abuse of notation $u$ 
for this boundary component and let
\begin{align}
	R_u:=
	\begin{cases}
		\lambda_u-1&\text{ if $u\in B_{in}$,}\\
		1-\mu_u&\text{ if $u\in B_{out}$.}
	\end{cases}
	\label{eq:bdrysign}
\end{align}
We call a marking \textsl{admissible} with given maps $\lambda$ and $\mu$,
if for every inner vertex $v\in\Sigma_0\cap(\Sigma\setminus\partial\Sigma)$ and for every boundary vertex $u\in\Sigma_0\cap\partial\Sigma$
the following conditions are satisfied:
\begin{align}
	\sum_{e\in \partial^{-1}(v)}\hat{s}_e&\equiv D_v-N_v+1&\Mod{r}\ ,
	\label{eq:vertexcond1}\\
	\sum_{e\in \partial^{-1}(u)}\hat{s}_e&\equiv D_u-N_u+1-R_u&\Mod{r}\ .
	\label{eq:vertexcond2}
\end{align}

For an arbitrary marking of a PLCW decomposition of $\Sigma$ 
one can define an $r$-spin structure with $r$-spin
boundary parametrisation on $\Sigma$ minus its vertices
by taking the trivial $r$-spin structure on faces and fixing
the transition functions using the marking.
The marking of an edge for a face contains the information how a standard polygon is embedded into the surface.
The above $r$-spin structure extends uniquely to the vertices of $\Sigma$,
if and only if the marking is admissible for $\lambda$ and $\mu$.
The $r$-spin boundary parametrisations are given by the
inclusion of $r$-spin collars (as prescribed by $\lambda$ and $\mu$)	
over the collars of the boundary parametrisation of $\Sigma$.
For more details on this construction we refer the reader to 
Appendix~\ref{app:refine}.
\begin{definition}\label{def:rspinstrindex}
	Denote the $r$-spin structure with $r$-spin boundary parametrisation 
	defined above by $\Sigma(s,\lambda,\mu)$. 
\end{definition}

There is some redundancy in the description of an $r$-spin structure via a marking. A one-to-one correspondence between certain equivalence classes of markings and isomorphism classes of $r$-spin structures will be given in Theorem~\ref{thm:rscomb} below. As preparation we first give a list of local modifications of the marking which lead to isomorphic $r$-spin structures.

\begin{figure}[tb]
	\centering
	\def\svgwidth{16cm}
	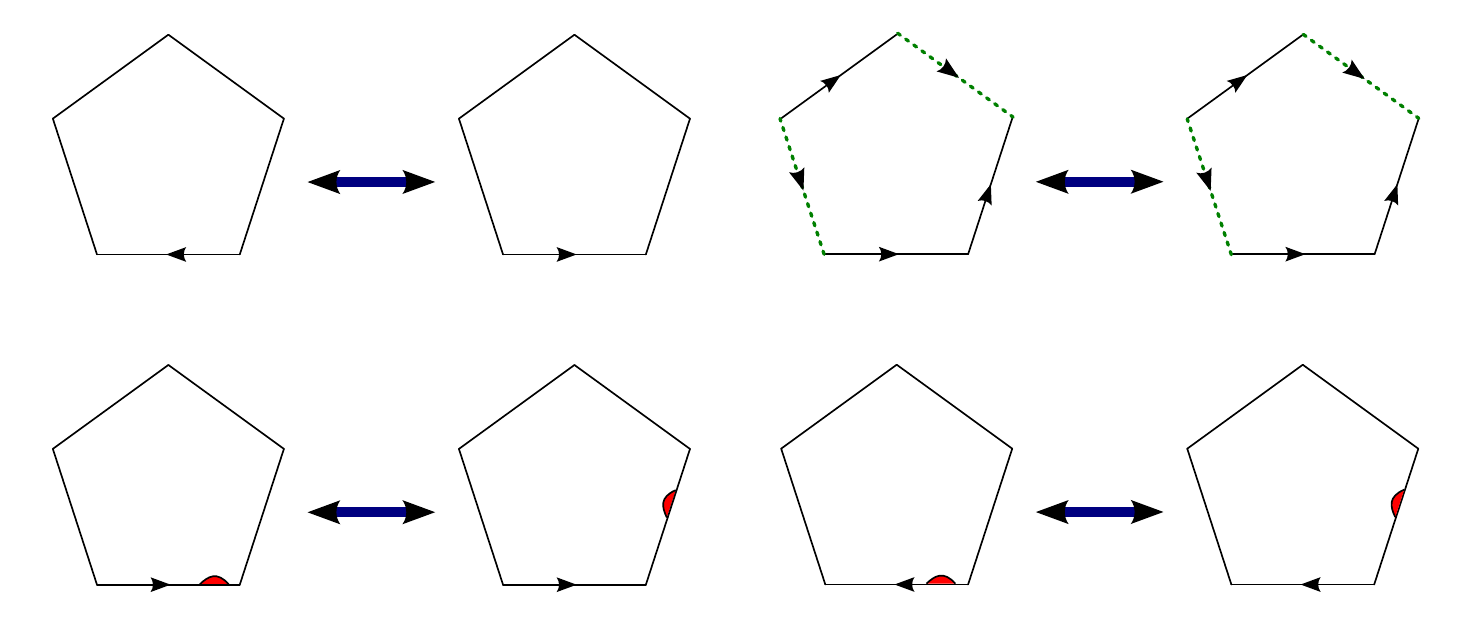
	\caption{Moves of Lemma~\ref{lem:moves} for a face of $\Sigma$. 
	All edge orientations and markings are arbitrary
	unless shown explicitly. $(1)$ Flipping the edge orientation of $e$.
	$(2a)$, $(2b)$ Moving the edge marking for a face. $(3)$ Shifting the edge indices for a face.
	The dotted edges $e_3$ and $e_5$ are identified, hence the edge index remains unchanged.
	The edges $e_1$ and $e_2$ are counterclockwise oriented, hence the $+k$ shift
	of the corresponding edge indices $s_1$ and $s_2$,
	the edge $e_4$ is clockwise oriented, hence the $-k$ shift of $s_4$.}
	\label{fig:fixmoves}
\end{figure}

\begin{lemma} \label{lem:moves}
	The following changes of the marking of the PLCW decomposition of $\Sigma$ (but keeping the PLCW decomposition fixed) 
	give isomorphic
$r$-spin structures:
	\begin{enumerate}
		\item Flip the orientation of an edge $e$ and change its edge index $s_e\mapsto -1-s_e$ (see Figure~\ref{fig:fixmoves}\,(1)).
			\label{lem:moves:1}
		\item Move the marking on an edge $e$ of a polygon to the following edge counterclockwise
			and change the edge index of the previously marked edge $s_e\mapsto s_e-1$,
			if this edge is oriented counterclockwise, $s_e\mapsto s_e+1$ otherwise (see Figure~\ref{fig:fixmoves} ($2a$) and ($2b$)).
			\label{lem:moves:2}
		\item Let $k\in\Zb$. Shift the edge index of each edge of a polygon 
			by $+k$, if the edge is oriented counterclockwise 
			with respect to the orientation of the polygon,
			and by $-k$ otherwise.
		If two edges of a polygon are identified (i.e.\ are given by the same $e \in \Sigma_1$),
		do not change its edge index.
			For an illustration, see Figure~\ref{fig:fixmoves} Part~$3$. We call this a \textsl{deck transformation}.
			\label{lem:moves:3}
	\end{enumerate}
These operations on the marking commute with each other
in the sense that the final edge indices do not depend on the order in which a given set of operations \ref{lem:moves:1}--\ref{lem:moves:3} is applied.
\end{lemma}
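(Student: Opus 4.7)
The plan is to unpack the construction of $\Sigma(s,\lambda,\mu)$ described in Definition~\ref{def:rspinstrindex}: on each face $F$ one places the (unique up to isomorphism) trivial $r$-spin structure $\widetilde{GL}_2^r \times F$, with a chosen trivialisation determined by the marked edge and the orientation of $F$, and the edge indices $s_e \in \Zb_r$ record the transition data (an element of the fibre $\Zb_r$ of $p_{GL}^r$) across each common edge of adjacent faces. Each of the three moves will be identified with a natural change of choices in this construction that leaves the resulting $r$-spin structure isomorphic. Once this is done, I will record the explicit formulas for the new edge indices and compare them with those claimed in Figure~\ref{fig:fixmoves}.

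For move~\ref{lem:moves:1}, reversing the orientation of an edge $e$ inverts the identification of the two adjacent frames across $e$; since passing between opposite orientations of the normal direction carries an intrinsic half-rotation in the $r$-spin cover, the inverse transition function changes the edge index by $s_e \mapsto -1-s_e$. For move~\ref{lem:moves:2}, the trivialisation on the disk of a face depends on which edge is chosen as marked; shifting the mark to the next edge counterclockwise amounts to post-composing the trivialisation with a fixed generator of the $\widetilde{GL}_2^r$-action corresponding to a rotation by one step, which translates into a shift of the transition across the previously marked edge by $\pm 1$, the sign depending on whether the edge orientation agrees with or opposes the boundary orientation of $F$. For move~\ref{lem:moves:3}, a deck transformation by $k \in \Zb$ of the $\widetilde{GL}_2^r$-principal bundle over a face multiplies the transition function on each of its boundary edges by $k$, with sign given by the agreement of the edge orientation with the induced boundary orientation; if an edge appears as a boundary of $F$ from both sides (a self-identification), the contributions $+k$ and $-k$ cancel and the index is unchanged, matching the statement.

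For the commutativity, each of the three operations acts by an affine-linear shift of edge indices supported on local data (a single edge, the marked edge plus one neighbour of a face, or all edges bounding a face), so operations with disjoint supports commute trivially. The nontrivial overlapping cases are: performing move~\ref{lem:moves:1} on an edge that is simultaneously the target of move~\ref{lem:moves:2} or of move~\ref{lem:moves:3}, and performing move~\ref{lem:moves:3} on two faces sharing an edge. In each case the check reduces to verifying that the formula for $\hat{s}_e$ in \eqref{eq:modedgeind} and the sign conventions in Figure~\ref{fig:fixmoves} are compatible: for instance, combining move~\ref{lem:moves:1} on $e$ with move~\ref{lem:moves:3} by $k$ on an adjacent face sends $s_e \mapsto -1-(s_e+k) = (-1-s_e)-k$, which agrees with first flipping the orientation and then applying the deck transformation with the reversed sign dictated by the new orientation. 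I expect the main obstacle to be the bookkeeping in the geometric step that realises each combinatorial move as an isomorphism of $r$-spin structures, especially the $-1$ shifts arising from the double cover issues; the commutativity itself is then a routine case-by-case verification.
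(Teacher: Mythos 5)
Your proposal takes a genuinely different route from the paper. The paper does not argue directly on the PLCW decomposition at all: it defines the $r$-spin structure $\Sigma(s,\lambda,\mu)$ by refining the marked PLCW decomposition to a triangulation (Appendix~\ref{app:refine}) and then proves Lemma~\ref{lem:moves} by citing the corresponding statements for triangulations from \cite{Novak:2015phd} -- operation~\ref{lem:moves:1} is an edge flip there, operation~\ref{lem:moves:3} is a simultaneous deck transformation on all triangles inside the polygon, and operation~\ref{lem:moves:2} is realised as a deck transformation on a specific \emph{subset} of the triangles of the refined face (the ``filled'' sector in Figure~\ref{fig:move2}). Your approach instead works with the informal picture from Section~\ref{sec:combsub} (trivial $r$-spin structure on each face, edge indices as transition data) and interprets each move as a change of trivialisation. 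That is a legitimate and arguably more conceptual strategy, but it buys you the obligation to first make the face-by-face construction rigorous, since the paper only ever defines $\Sigma(s,\lambda,\mu)$ through the triangulation; the paper's route avoids this by outsourcing all the geometry to the already-proven triangulation lemmas.

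Within your argument there is one point that does not work as stated: for operation~\ref{lem:moves:2} you describe the mechanism as ``post-composing the trivialisation with a fixed generator of the $\widetilde{GL}_2^r$-action,'' i.e.\ a global change of trivialisation of the face. A global change of trivialisation shifts the transition function on \emph{every} boundary edge of the face and is exactly operation~\ref{lem:moves:3}; it cannot produce a shift supported on the single previously marked edge. The correct mechanism is a change of trivialisation supported only on the sector of the face between the old and the new marked edge (equivalently, moving the location of the ``branch cut'' encoded by the marking), which is precisely what the partial deck transformation in the paper's Figure~\ref{fig:move2} implements. Your stated conclusion ($s_e\mapsto s_e\mp1$ on the previously marked edge) is correct, and your commutativity checks, e.g.\ $-1-(s_e+k)=(-1-s_e)-k$, are fine; but as written the justification of operation~\ref{lem:moves:2} would need to be repaired along these lines.
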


Note that the operation in~\ref{lem:moves:3} is the same as moving around the marking of a face completely by applying operation~\ref{lem:moves:2}.
This lemma  is proved in Appendix~\ref{app:proofs}.

Let $\Sigma$ be a surface with a fixed PLCW decomposition. 
Write $(m,o,s)$ for a given marking of $\Sigma$, where $m$ denotes the edge markings of the faces, $o$ the edge orientations and $s$ the edge indices (cf.\ Definition~\ref{def:marking-PLCW}).
Let $\Mc(\Sigma)_{\lambda,\mu}^{PLCW}$ denote 
the set of all admissible markings for the
maps $\lambda$ and $\mu$ on $\Sigma$.
The operations in Lemma~\ref{lem:moves} generate an equivalence relation $\sim_{\mathrm{fix}}$
on $\Mc(\Sigma)_{\lambda,\mu}^{PLCW}$. 
Let us denote equivalence classes by $\left[ m,o,s \right]$.
The following lemma gives a more concrete description of the equivalence classes.

\begin{lemma}\label{lem:fixed-mo-relation}
Let $(m,o,s) \in \Mc(\Sigma)_{\lambda,\mu}^{PLCW}$.
We have:
\begin{enumerate}
\item For every choice $m',o'$ there is some $s'$ such that $\left[ m,o,s \right]\sim_{\mathrm{fix}}\left[ m',o',s' \right]$.
\item For a given choice of edge indices $\tilde s$ we have $\left[ m,o,s \right]\sim_{\mathrm{fix}}\left[ m,o,\tilde s \right]$ if and only if $s$ and $\tilde s$ are related by a sequence of deck transformations (operation~\ref{lem:moves:3}) in Lemma~\ref{lem:moves}. 
\end{enumerate}
\end{lemma}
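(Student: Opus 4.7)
My plan is to prove the two parts separately, using the commutativity of the moves asserted at the end of Lemma~\ref{lem:moves}.

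For Part~1, I proceed constructively. Given a target pair $(m', o')$, for each edge $e$ whose orientation in $o'$ differs from that in $o$, I apply move~\ref{lem:moves:1}, which flips $e$ and sends $s_e \mapsto -1-s_e$. After these flips the orientation data matches $o'$. Then, face by face, I apply move~\ref{lem:moves:2} to slide the marked edge counterclockwise until it reaches the position prescribed by $m'$, adjusting the affected edge indices along the way. Composing these moves yields $(m', o', s')$ for some edge indices $s'$ with $(m,o,s)\sim_{\mathrm{fix}}(m',o',s')$, and since $\sim_{\mathrm{fix}}$ preserves admissibility (the moves represent the same $r$-spin structure), the resulting $s'$ is automatically admissible.

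For Part~2, the ``if'' direction is immediate from Lemma~\ref{lem:moves}\,\ref{lem:moves:3}. For the ``only if'' direction, suppose $(m,o,s)\sim_{\mathrm{fix}}(m,o,\tilde s)$ via some sequence of the elementary operations from Lemma~\ref{lem:moves}. By the commutativity statement in that lemma, I may group the operations by type, performing all orientation flips first, then all marking shifts, and finally all deck transformations. Since $(m,o)$ is unchanged overall, each edge $e$ must be flipped an even number of times; but a double flip of $e$ acts on its index as $s_e\mapsto -1-s_e\mapsto s_e$, so the net effect of type-\ref{lem:moves:1} moves on the indices is trivial. Similarly, for each face the net number of marking shifts must be a multiple of the number of edges of that face; by the remark following Lemma~\ref{lem:moves}, moving the marking once around a full cycle of a face is exactly one deck transformation. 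Hence the combined effect of the type-\ref{lem:moves:2} moves reduces to a sequence of deck transformations, and together with the explicitly performed type-\ref{lem:moves:3} moves this shows that $\tilde s$ and $s$ differ only by deck transformations.

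The main obstacle is a careful local bookkeeping check underlying the commutativity statement, particularly when a face has self-identified edges (as in Figure~\ref{fig:fixmoves}\,(3)): one must verify that a full loop of marking shifts around such a face produces exactly one deck transformation, consistent with the convention in move~\ref{lem:moves:3} that identified edges are left unchanged. This is a computation on a single polygon and follows from matching the sign rules in moves~\ref{lem:moves:2} and~\ref{lem:moves:3}, but it is the one step requiring genuine case analysis rather than formal manipulation.
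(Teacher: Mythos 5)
Your proof is correct and follows essentially the same route as the paper: Part~1 by directly applying operations~\ref{lem:moves:1} and~\ref{lem:moves:2} to reach the target $(m',o')$, and Part~2 by using the commutativity of the moves to normalise the sequence, then arguing that preservation of $(m,o)$ forces orientation flips to occur in pairs and marking shifts in full multiples of the face's edge count, reducing everything to deck transformations. The subtlety you flag about self-identified edges is exactly the content of the remark after Lemma~\ref{lem:moves} (that a full loop of operation~\ref{lem:moves:2} equals operation~\ref{lem:moves:3}), which the paper likewise relies on.
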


\begin{proof}
The first statement is immediate from operations~\ref{lem:moves:1} and~\ref{lem:moves:2} in Lemma~\ref{lem:moves}. For the second statement recall that 
operations \ref{lem:moves:1}--\ref{lem:moves:3} commute, and operation~\ref{lem:moves:3} is redundant. Any sequence of operations can thus be written as $M = \prod_e (\text{op.\,\ref{lem:moves:1} for edge $e$}) \prod_f (\text{op.\,\ref{lem:moves:2} for face $f$})$. Since $m$ and $o$ do not change, operation~\ref{lem:moves:1} for an edge $e$ must occur in pairs, leaving $s_e$ unchanged, and operation~\ref{lem:moves:2} for a face $f$ must occur in multiples of the number of edges of that face, so that the total change is expressible in terms of operation~\ref{lem:moves:3}, $M = \prod_f (\text{op.\,\ref{lem:moves:3} for face $f$})$.
\end{proof}

Let $\Rc^r(\Sigma)_{\lambda,\mu}$ denote
the isomorphism classes of $r$-spin structures with 
$r$-spin boundary parametrisation for the maps $\lambda$ and $\mu$.
The following theorem is proved in Appendix~\ref{app:proofs}.
\begin{theorem} \label{thm:rscomb}
Let $\Sigma$ be a surface with PLCW decomposition.
	The map 
	\begin{align}
		\Mc(\Sigma)_{\lambda,\mu}^{PLCW}/\sim_{\mathrm{fix}}&
		~\longrightarrow~\Rc^r(\Sigma)_{\lambda,\mu}\nonumber\\
		\left[ m,o,s \right]
		&~\longmapsto~\left[\Sigma(s,\lambda,\mu)\right]
		\label{eq:rscomb}
	\end{align}
	is a bijection. 
	On the right hand side it is understood that the edge markings and orientations of $\Sigma$ are given by $m,o$.
\end{theorem}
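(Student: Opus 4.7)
The strategy is to establish the bijection in two movements: first verifying well-definedness from the already-proven Lemma~\ref{lem:moves}, then reducing to the triangulation-based bijection of \cite{Novak:2015phd} via PLCW refinement.

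Well-definedness is immediate. The three moves of Lemma~\ref{lem:moves} generate $\sim_{\mathrm{fix}}$ by definition, and Lemma~\ref{lem:moves} asserts precisely that they preserve the isomorphism class of $\Sigma(s,\lambda,\mu)$. Thus \eqref{eq:rscomb} descends to a well-defined map on $\Mc(\Sigma)_{\lambda,\mu}^{PLCW}/\sim_{\mathrm{fix}}$.

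For bijectivity, the plan is to refine the given PLCW decomposition $K$ of $\Sigma$ to a simplicial complex $T$ using \cite[Thm.\,6.3]{Kirillov:2012pl}, and to construct a refinement map $\Phi$ from $K$-markings to $T$-markings: each new interior edge is assigned a reference orientation and edge index $0$, each subdivided face inherits an edge marking compatible with that of its parent face, and the original edge indices are kept. One then verifies that $\Phi$ sends admissible markings to admissible markings by checking the vertex conditions \eqref{eq:vertexcond1}--\eqref{eq:vertexcond2} at the newly introduced interior vertices; there the index contributions telescope because all new edges carry index $0$ and the counts of $D_v$ and $N_v$ are arranged symmetrically. Checking that the conditions at original vertices of $K$ are preserved is a direct computation using that index $0$ edges contribute only through the $D_v - N_v + 1$ terms, which balance against the new face and edge counts.

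Next, one shows that $\Phi$ descends to a well-defined map on equivalence classes and that its composition with the triangulation-based bijection of \cite{Novak:2015phd} (which is recorded in Appendix~\ref{app:novak}) equals \eqref{eq:rscomb}. This is a direct comparison of the constructions: both produce the $r$-spin structure by glueing trivial $r$-spin structures on faces using the edge indices as transition data, and the triangulation refinement does not alter the resulting transition cocycle because trivial-index interior edges carry trivial transition. Since the triangulation bijection is known to be an isomorphism onto $\Rc^r(\Sigma)_{\lambda,\mu}$, this identifies \eqref{eq:rscomb} with a composition of a bijection and a map $\Phi$, so it suffices to prove that the induced map $\bar\Phi$ on equivalence classes is a bijection.

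Injectivity of $\bar\Phi$ uses Lemma~\ref{lem:fixed-mo-relation}: two $K$-markings with the same $(m,o)$ produce refinements differing only in the original edge indices, and any $T$-level equivalence between them restricts (since interior refinement edges have index $0$ throughout) to a composition of deck transformations on the faces of $K$. Surjectivity is the main obstacle: one must show that every admissible $T$-marking is equivalent, via Novak's triangulation moves, to one lying in the image of $\Phi$. The plan is to use Novak's moves to successively reduce interior refinement edges to index $0$ and to propagate edge markings onto the parent-face marked edge; this is an induction on the number of refinement edges, where at each step one applies a deck transformation on a small triangle to kill a non-zero interior index, using admissibility to guarantee that the move stays within $\Mc^{PLCW}/\sim_{\mathrm{fix}}$.

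The core difficulty is precisely this last step: ensuring that the equivalence relation generated by the three PLCW moves of Lemma~\ref{lem:moves}, when lifted via $\Phi$, matches Novak's finer triangulation equivalence on the image of $\Phi$, and that every triangulation class has a representative in that image. Once this matching is established, the claimed bijection follows.
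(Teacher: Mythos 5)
Your overall strategy -- refine the PLCW decomposition to a triangulation, compare with the triangulation-based bijection of \cite{Novak:2015phd}, and show the induced map on equivalence classes is a bijection -- is exactly the route the paper takes in Appendix~\ref{app:proofs}. However, there is a concrete gap in your construction of the refinement map $\Phi$: you assign edge index $0$ to every new interior edge and claim the vertex conditions ``telescope''. This fails. The admissibility condition \eqref{eq:tri:vertexcond1} at a newly created central vertex $v$ of a radially subdivided $n$-gon reads $\sum_{e\in\partial^{-1}(v)}\hat{s}_e \equiv D_v - N_v + 1 \Mod{r}$ with $N_v = 2n$ and a small value of $D_v$ determined by the chosen markings of the new triangles; with all new indices equal to $0$ the left side vanishes while the right side does not, so the refined marking is not admissible and does not define any $r$-spin structure, let alone the same one. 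The indices on the new edges are \emph{forced} by admissibility, and the correct values are the $-1$'s and $-2$'s of Figure~\ref{fig:lem:subdiv} (Lemma~\ref{lem:subdiv}). Relatedly, your justification that ``trivial-index interior edges carry trivial transition'' is not sound in this formalism: the index playing the role of the neutral element is orientation-dependent (flipping an edge sends $s_e \mapsto -1-s_e$, so $0$ and $-1$ are exchanged), and one cannot read off triviality of the transition cocycle from the index $0$ alone.

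Two further points you would need to address. First, the triangulation formalism of \cite{Novak:2015phd} requires each boundary circle to carry three edges and three vertices plus an extra marked boundary edge, whereas your PLCW decompositions have one edge and one vertex per boundary circle; the paper bridges this by inserting seven extra triangles at each boundary component with prescribed indices (Lemma~\ref{lem:subdivb}), and some such step is unavoidable. Second, your surjectivity argument aims at the normal form ``all refinement edges have index $0$'', which is unreachable for the reason above; the correct normal form is the $-1/-2$ pattern, and the paper reaches it by an explicit algorithm of deck transformations organised around each face (Figure~\ref{fig:rsubdiv_bdry_pf}), after which admissibility at the subdivision vertices of an original edge forces the four sub-edges to carry a common index, exhibiting the marking as a refinement of a PLCW marking. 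Your injectivity argument via Lemma~\ref{lem:fixed-mo-relation} and the rigidity of deck transformations on triangles within a single face is essentially the paper's and is fine once the refinement map is corrected.
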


\begin{remark} \label{rem:rscomb}
When combined with Lemma~\ref{lem:fixed-mo-relation}, this shows
that for a fixed edge marking and orientation 
	the admissible edge index assignments up to deck transformations are
	in bijection with the isomorphism classes of $r$-spin structures with 
	$r$-spin boundary parametrisation for the maps $\lambda$ and $\mu$.
\end{remark}

\subsection{Holonomies in the combinatorial model}
  In this section we explain how to compute holonomies along curves in $\Sigma$
  which then characterise the $r$-spin structure on $\Sigma$.
  We hope that this sheds some light on the geometric meaning of the quantities appearing 
  in the combinatorial model.

Let $\Sigma$ be a compact $r$-spin surface with parametrised boundary with
maps $\lambda:B_{in}\to\Zb_r$ and $\mu:B_{out}\to\Zb_r$.
By a \textsl{curve} in $\Sigma$ we mean a smooth immersion 
$\gamma:\left[ 0,1 \right]\to \Sigma$ (i.e.\ $\gamma$ has nowhere vanishing derivative), 
and which is either closed, or which starts and ends on the 
boundary of $\Sigma$. In the former case we require in addition that 
the tangent vectors at the start and end point agree: 
$\frac{d}{dt}\gamma(0)=\frac{d}{dt}\gamma(1)$. 
In the latter case we require that the start
and end points are the images of $1\in \Sb\subset\Cb$ under the boundary parametrisation maps
and that the tangent vector of the curve is the same as the tangent vector of the boundary curve.
Two curves $\gamma_0$ and $\gamma_1$ with $\gamma_0(0)=\gamma_1(0)$ and $\gamma_0(1)=\gamma_1(1)$ 
are \textsl{homotopic} if there is a homotopy $(s,t)\mapsto \gamma_s(t)$ between them, 
such that for each $s$, $\gamma_s$ is a curve in the above sense.
In particular, since $\frac{d}{dt}\gamma$ must remain nonzero everywhere along the homotopy, one cannot ``pull straight'' a loop in the curve.

Pick a lift $\gamma_F:\left[ 0,1 \right]\to F_{GL}\Sigma$ of $\gamma$ to the oriented frame bundle
by taking the tangent vector of $\gamma$ 
(which is non-zero since $\gamma$ is an immersion) 
and adding another non-zero and 
non-parallel 
vector such that the orientation induced by them 
agrees with the orientation of the surface.
Such a lift of a curve in $\Sigma$ to $F_{GL}\Sigma$ is unique up to homotopy, 
see e.g.\ \cite[p.\,26]{Novak:2015phd}.
Also, if two curves in $\Sigma$ are homotopic, then their lifts to $F_{GL}\Sigma$
	are homotopic as well.

Consider a 
disc $D$ around $1$ in $\Cb^\times$
with $r$-spin structure $D^\kappa$ given by the restriction of $\Cb^\kappa$ 
for $\kappa\in\Zb_r$ 
as in Example~\ref{ex:cx}. As on a contractible surface, all $r$-spin structures are isomorphic (see e.g.\ \cite[Lem.\,3.10]{Novak:2015phd}), there is an isomorphism of $r$-spin structures $D^0 \to D^\kappa$. In fact, there are exactly $r$ such isomorphisms, and we pick the one which acts as the identity on the fibre over $1$ (by Example~\ref{ex:cx}, the fibre and projection over $1 \in \Cb^\times$ agree for all $\Cb^\kappa$).
This construction will be needed to assign a holonomy to curves between  different boundary components.

Recall that $P_{\widetilde{GL}}\Sigma$ is a principal $\Zb_r$ bundle over $F_{GL}\Sigma$.
Pick a lift $\tilde{\gamma}:\left[ 0,1 \right]\to P_{\widetilde{GL}}\Sigma$ 
of $\gamma_F$ to the $r$-spin bundle.
Since the fibers of $P_{\widetilde{GL}}\Sigma\xrightarrow{p}F_{GL}\Sigma$ are discrete, 
this lift is unique after fixing it at one point and homotopic curves in $F_{GL}\Sigma$ lift to 
homotopic curves in $P_{\widetilde{GL}}\Sigma$.
If $\gamma$ is a closed curve let $\zeta(\gamma)\in\Zb_r$ denote the holonomy of $\tilde{\gamma}$ at $\gamma(0)=\gamma(1)$.
If $\gamma$ is not closed, 
use the isomorphism $D^0 \to D^\kappa$ from above to identify the fibers $\Zb_r$ over the start- and end-point of $\gamma_F$, and 
let again $\zeta(\gamma)\in\Zb_r$ denote the resulting holonomy of $\tilde{\gamma}$.

\medskip

	\begin{figure}[tb]
		\centering
		\def\svgwidth{5cm}
		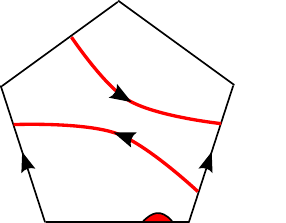
		\caption{Two arcs $p,q\in A(\gamma)$ of a curve $\gamma$ on a face $f$. 
			Here $f_p=f_q=f$, $\hat{s}_{e_p}=-s_{e_p}-1$,
			$\hat{s}_{e_q}=s_{e_q}$, $\hat{\delta}_{f_p}^p=+1$ and 
			$\hat{\delta}_{f_q}^q=0$.
		}
		\label{fig:facehol}
	\end{figure}

We now explain how to compute these holonomies in terms of the combinatorial description of $r$-spin structures.
	Take a decorated PLCW decomposition of $\Sigma$ with edge index assignment $s$ 
	and consider the $r$-spin structure $\Sigma(s,\lambda,\mu)$ 
given by Definition~\ref{def:rspinstrindex}.
We may assume the PLCW decomposition to be fine enough so that its edges 
split $\gamma$ into a set of arcs $A(\gamma)$
	as in Figure~\ref{fig:facehol}.
	Then for every $a\in A(\gamma)$ there is a face $f_a\in\Sigma_2$ containing $a$
	and an edge $e_a$ in the boundary of $f_a$ where the arc $a$ leaves the face $f_a$
	(see Figure~\ref{fig:facehol}).  
Let us assume that $e_a$ is not a boundary edge.
	For $s_{e_a}$ the edge index of the edge $e_a$ 
	let $\hat{s}_{e_a}^a=s_{e_a}$ if the edge $e_a$ and $a$ cross positively, and
	$\hat{s}_{e_a}^a=-s_{e_a}-1$ otherwise (see again Figure~\ref{fig:facehol} for conventions).
	Let $\hat{\delta}_{f_a}^a=+1$ if
	the clockwise vertex of the marked edge 
	of the face $f_a$ is on the right side of $a$
	(before glueing the edges)
	and 
		$\hat{\delta}_{f_a}^a=0$ 
	otherwise. 
	If $\gamma$ is not a closed curve, let $e_{\mathrm{start}}$ 
	(resp.\ $e_{\mathrm{end}}$) denote the boundary
	edge where $\gamma$ starts (resp.\ ends), 
	and let $s_{e_{\mathrm{start}}}$ (resp.\ $s_{e_{\mathrm{end}}}$)
	be its edge index. 
	Recall that at the starting (ending) point of $\gamma$ 
	the tangent vector of $\gamma$ is parallel to the
	boundary edge.
	Set 
	$\hat{s}_{e_{\mathrm{start}}}:=-s_{e_{\mathrm{start}}}-1$ 
	if
	the edge $e_{\mathrm{start}}$ and 
	the tangent vector point in the same direction and
	$\hat{s}_{e_{\mathrm{start}}}:=s_{e_{\mathrm{start}}}$ 
	otherwise.
	Set 
	$\hat{s}_{e_{\mathrm{end}}}:=s_{e_{\mathrm{end}}}$ 
	if
	the edge $e_{\mathrm{end}}$ and 
	the tangent vector point in the same direction and
	$\hat{s}_{e_{\mathrm{end}}}:=-s_{e_{\mathrm{end}}}-1$ 
	otherwise.

The proof of the following proposition relies on the relation to triangulations 
introduced in Appendix~\ref{app:novak} and is given in Appendix~\ref{app:pf:lem:indexhol}.

\begin{proposition}\label{prop:indexhol}	
Let $\gamma$ be a curve in $\Sigma$.
	Then:
	\begin{enumerate}
		\item 
			If $\gamma$ 
			bounds a disc $\mathbb{D}$ embedded in $\Sigma$,
$\zeta(\gamma)=1$ if $\gamma$ is oriented counter-clockwise around the 
		boundary of $\mathbb{D}$ 
			and $\zeta(\gamma)=-1$ otherwise.
			\label{lem:indexhol1}
		\item 
			If $\gamma'$ is a curve homotopic to $\gamma$ then $\zeta(\gamma')=\zeta(\gamma)$;
			\label{lem:indexhol3}
		\item 
We have
			\begin{align}
				\zeta(\gamma)=
				\sum_{a\in A(\gamma)} (\hat{s}_{e_a}^a+\hat{\delta}_{f_a}^a)+
				\begin{cases}
					0&; \text{$\gamma$ is closed}\ ,\\
						\hat{s}_{e_{\mathrm{start}}}+1&; \text{$\gamma$ is not closed}\ .
				\end{cases}
				\label{eq:holonomy}
			\end{align}
			\label{lem:indexhol2}
	\end{enumerate}
\end{proposition}

Note that in Part~\ref{lem:indexhol2}, in case the curve goes from boundary to boundary, the edge index of the boundary edge
where the endpoint of the curve lies is included in the sum over $A(\gamma)$.

\begin{example}
The admissibility conditions for an inner vertex \eqref{eq:vertexcond1} express that the $r$-spin structure near the vertex is the trivial $r$-spin structure, i.e.\, the unique one extending from the surface to the vertex.
Indeed, pick a curve $\gamma$ oriented counter-clockwise around the vertex. 
By part~\ref{lem:indexhol1} of Proposition~\ref{prop:indexhol}, the holonomy is $\zeta(\gamma)=1$. To match \eqref{eq:holonomy} and \eqref{eq:vertexcond1}, it remains to check that $\sum_{a\in A(\gamma)} \hat{\delta}_{f_a}^a
= N_v-D_v$. This in turn follows since an edge $e_a$ that contributes $+1$ to $D_v$ has $\hat{\delta}_{f_a}^a=0$ and vice versa.
\end{example}

\subsection{Elementary moves on marked PLCW decompositions}\label{sec:eltmove}

In Section~\ref{sec:combsub} we defined the $r$-spin structure $\Sigma(s,\lambda,\mu)$ in terms of a marked PLCW decomposition, and we explained how to change the marking while staying within a given isomorphism class of $r$-spin structures. In this section we state how the marking needs to change when modifying the underlying PLCW decomposition by elementary moves in order to produce isomorphic $r$-spin structures.

	\begin{figure}[tb]
		\centering
		\def\svgwidth{16cm}
		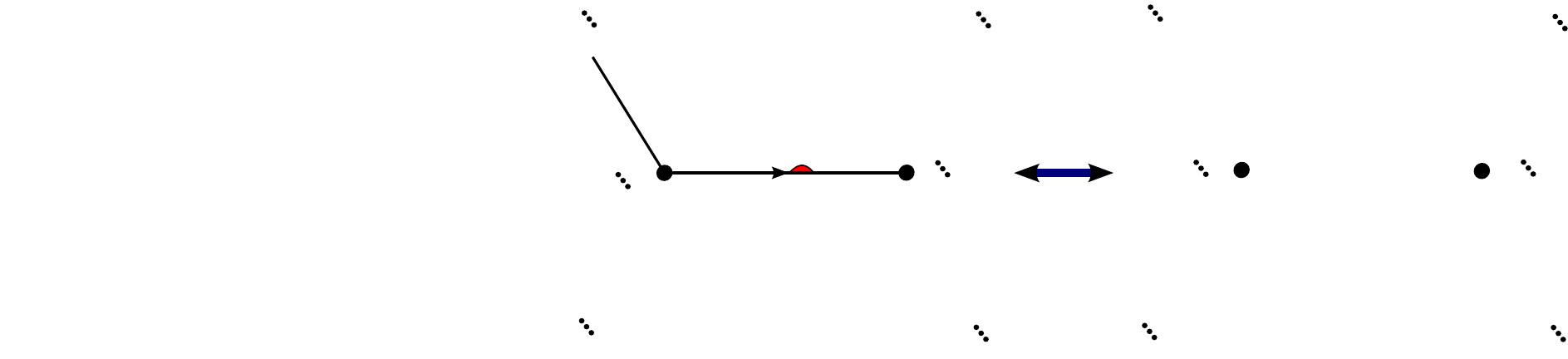
		\caption{Elementary moves of a marked PLCW decomposition.
			Figure~$a)$ shows edges between faces $f$ and $f'$
			(which are allowed to be the same).
			The edges are marked so that
			the vertex $w$ is the clockwise vertex for the face $f$
			(cf.\ Figure~\ref{fig:clockwisevertex}). 
			This convention is not restrictive as one can change
			the orientation of the edges and the markings using 
			Lemma~\ref{lem:moves}.
			In Figure~$b)$, on the left hand side the horizontal edge 
			between the vertices $v$ and $v'$ (which are allowed to be the same)
			is marked for the top polygon, but not for the bottom polygon, 
			and it has edge index 0. For the joint polygon on the right hand side, 
			the marked edge is taken to be that from the bottom polygon on the left.
			Note that this latter convention for the markings is not restrictive,
			as using Lemma~\ref{lem:moves} one can move the markings around.}
		\label{fig:edgemove}
	\end{figure}

\begin{definition}
	An \textsl{elementary move} on a PLCW decomposition of a surface
	is either 
	\begin{itemize}
		\item removing or adding a bivalent vertex as shown in Figure~\ref{fig:edgemove}~$a$), or
		\item removing or adding an edge as shown in Figure~\ref{fig:edgemove}~$b$).
	\end{itemize}
\end{definition}

By \cite[Thm.\,7.4]{Kirillov:2012pl}, any two
PLCW decompositions can be
related by elementary moves. We prove the following proposition in 
Appendix~\ref{app:proofs}.

\begin{proposition} \label{prop:elementarymoves}
	The elementary moves in Figure~\ref{fig:edgemove} 
	induce isomorphisms of $r$-spin structures.
\end{proposition}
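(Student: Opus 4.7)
The plan is to verify the statement by direct inspection of each move, using the explicit construction of $\Sigma(s,\lambda,\mu)$ from Definition~\ref{def:rspinstrindex}. Recall that this $r$-spin structure is built by assigning the trivial $\widetilde{GL}_2^r$-bundle to (a small neighbourhood of) each face, gluing across each edge $e$ via a transition function that shifts the fiber by $s_e \in \Zb_r$, and then showing that the resulting bundle on $\Sigma \setminus \Sigma_0$ extends across the vertices precisely when the admissibility conditions \eqref{eq:vertexcond1}--\eqref{eq:vertexcond2} hold. So two marked PLCW decompositions of the same $\Sigma$ will produce isomorphic $r$-spin structures as soon as the transition data glues to the same bundle away from the vertices and the extension across the vertices is compatible.

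For move $(a)$, the bivalent vertex $v$ lying between two edges $e_1,e_2$ (oriented so that $v$ is the endpoint of $e_1$ and the start of $e_2$, as in Figure~\ref{fig:edgemove}~$a$) has $N_v=2$ and, by choosing markings appropriately via Lemma~\ref{lem:moves}, $D_v=0$. The admissibility condition~\eqref{eq:vertexcond1} then reads $\hat s_{e_1}+\hat s_{e_2}\equiv -1 \pmod{r}$, i.e.\ $(-1-s_{e_1})+s_{e_2}\equiv -1$, forcing $s_{e_1}=s_{e_2}=:s$. This is exactly the labelling on the left of Figure~\ref{fig:edgemove}~$a)$. The new single edge also carries index $s$; across it, the total transition function is precisely the composition of the two original transitions, and at each of the remaining endpoints of $e_1,e_2$ both the contribution $\hat s_e$ and the integer $N-1$ change by the same amount, so~\eqref{eq:vertexcond1}--\eqref{eq:vertexcond2} continue to hold. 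Hence the two markings define literally the same $\widetilde{GL}_2^r$-bundle on the complement of $v$, and the extension across $v$ on the left matches the (trivial) structure on the interior of the new edge on the right.

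For move $(b)$, the edge $e$ with $s_e=0$ between faces $f$ and $f'$ (possibly $f=f'$) carries the identity transition function, so the two trivial $r$-spin structures on $f$ and $f'$ glue along $e$ to a single trivial structure on $f\cup e\cup f'$. We adopt the new marking shown on the right of Figure~\ref{fig:edgemove}~$b)$: the marked edge of the merged face is inherited from the bottom face and all other edge data is left unchanged. Across every remaining edge the transition function is the one we already had, so the underlying bundle on $\Sigma\setminus\Sigma_0$ is unchanged. At each endpoint $v$ of the removed edge $e$, the quantities change by $N_v\mapsto N_v-1$, $D_v$ changes by $0$ or $1$ depending on whether $e$ was a marked edge incident at $v$ in clockwise position, and the sum $\sum_{e'}\hat s_{e'}$ loses the term $\hat s_e\in\{0,-1\}$; these three shifts cancel pairwise using $s_e=0$, so admissibility is preserved and the bundle extends across $v$ exactly as before.

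The step I expect to require the most care is the bookkeeping for move $(b)$ in the non-generic configurations, in particular when $f=f'$ (so that removing $e$ changes a face into one with more identified sides) or when an endpoint of the removed edge coincides with the bivalent vertex of the configuration that appears after the move. In those cases the hypotheses $s_e=0$ and the marking conventions of Figure~\ref{fig:edgemove}~$b)$ can be arranged without loss of generality by first applying the local modifications of Lemma~\ref{lem:moves} (which by Theorem~\ref{thm:rscomb} do not change the isomorphism class of the $r$-spin structure), and then the vertex-by-vertex admissibility check reduces to the generic case treated above. Combined with the inverse (adding) direction, which is obtained by reading the same figures from right to left, this proves that each elementary move induces an isomorphism of $r$-spin structures.
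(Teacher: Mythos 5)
Your overall strategy --- check directly that the glued $\widetilde{GL}_2^r$-bundles agree away from the affected vertices and that admissibility is preserved there --- is genuinely different from the paper's, but it has a gap at its foundation. The $r$-spin structure $\Sigma(s,\lambda,\mu)$ of Definition~\ref{def:rspinstrindex} is \emph{not} defined in this paper by directly gluing trivial bundles over the polygons; it is defined (Appendix~\ref{app:refine}, Definition~\ref{def:rspintriang}) by first refining the marked PLCW decomposition to a triangulation via two radial subdivisions and then invoking the triangulation-based construction of \cite{Novak:2015phd}. Accordingly, the paper's proof compares the two triangulations obtained from the two sides of each elementary move and transforms one into the other by Pachner moves and the derived $T_n$-moves (Lemmas~\ref{lem:pachner} and~\ref{lem:tnmove}, Figure~\ref{fig:edgemove_triang}), which are already known to induce isomorphisms of $r$-spin structures; the constraints $s_{e_1}=s_{e_2}$ and $s_e\equiv 0$ then drop out of the admissibility conditions at the touched vertices, exactly as in your computation. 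Your argument instead reasons about an informal description (``a transition function that shifts the fiber by $s_e$'') that the paper never makes precise at the PLCW level, so the central claims --- that ``the total transition function is precisely the composition of the two original transitions'' and that ``the two markings define literally the same bundle on the complement of $v$'' --- are not grounded in any actual definition.

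This is not merely a formality. The transition data depend not only on the edge indices and orientations but also on the choice of marked edge of each face (this is why $D_v$ enters \eqref{eq:vertexcond1}, and why moves $(2a)$, $(2b)$ of Lemma~\ref{lem:moves} shift edge indices), and the statement ``index $0$ means identity transition'' is convention-dependent: flipping the orientation of $e$ turns $s_e=0$ into $s_e=-1$. The specific marking convention of Figure~\ref{fig:edgemove}~$b)$ --- the removed edge is marked for the top face but not for the bottom one, and the merged face inherits the bottom face's marking --- is therefore essential to the claim that the removable index is $0$, and your proof does not verify compatibility with it; your appeal to Lemma~\ref{lem:moves} to normalise the markings does not by itself show that the bundle is unchanged when the edge is deleted. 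To repair the argument you would either have to set up the direct PLCW-level construction rigorously (including the role of the markings in the transition functions) and prove it agrees with Definition~\ref{def:rspintriang}, or pass to the refined triangulations --- at which point you are carrying out the paper's proof.
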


	\begin{figure}[tb]
		\centering
		\def\svgwidth{8cm}
		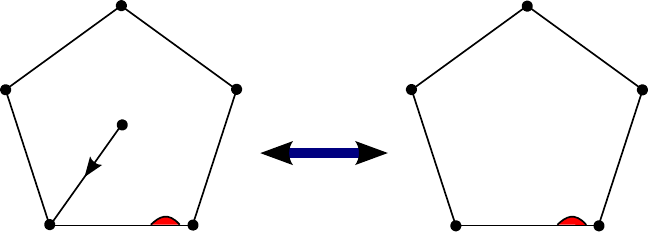
		\caption{The edge index of an univalent vertex is fixed by only the marking and orientation of edges.
		Removing a univalent vertex induces an isomorphism of $r$-spin structures.}
		\label{fig:univalent}
	\end{figure}

The edge index of an edge with a univalent vertex
is fixed by the orientation and the marking of the edge,
in particular it is independent of the rest of the edge indices.
In Lemma~\ref{lem:univalent} we will show that removing univalent vertices
induces an isomorphism of $r$-spin structures.
For an illustration, see Figure~\ref{fig:univalent}.

\subsection{Example: Connected $r$-spin surfaces}\label{sec:sigmagb}

In this section we illustrate how one can use the combinatorial formalism
to count isomorphism classes of $r$-spin structures.
This recovers results obtained in \cite{Randal:2014rs,Geiges:2012rs}
using a different formalism.
\begin{notation}\label{not:edgeindex}
	Whenever it does not cause confusion we will use the same symbols
	for edge labels and for edge indices. For example
	for $e\in\Sigma_1$ we will simply write $e\in\Zb_r$ instead of $s_e\in\Zb_r$.
\end{notation}

\begin{lemma}\label{lem:sphere}
	There exists $r$-spin structures on the sphere if and only if $r=1$ or $r=2$.
	If there exists an $r$-spin structure on the sphere then it is unique up to isomorphism.
\end{lemma}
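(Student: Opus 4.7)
My plan is to apply the combinatorial description of Theorem~\ref{thm:rscomb} to the PLCW decomposition of $S^2$ shown in Figure~\ref{fig:gcc}$c)$: a single vertex $v$, a single edge $e$ which is a loop at $v$, and two $1$-gon faces $f_1, f_2$ each with boundary $e$. I pick $e$ as the marked edge of both faces (the only option) and choose an arbitrary orientation for $e$. Since $S^2$ has no boundary, the only admissibility condition to check is \eqref{eq:vertexcond1} at $v$.

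I then read off the relevant quantities from the definitions in Section~\ref{sec:combsub}. The loop $e$ contributes $1$ to each of $N_v^{\mathrm{start}}$ and $N_v^{\mathrm{end}}$, so $N_v = 2$. The marked edge of both $f_1$ and $f_2$ is $e$ and its clockwise vertex is $v$ (the only vertex), so $D_v = 2$. Finally $\hat s_e = -1$ by \eqref{eq:modedgeind} because $e$ starts and ends at $v$. Thus \eqref{eq:vertexcond1} reduces to $-1 \equiv 1 \pmod r$, i.e.\ $r \mid 2$, which forces $r \in \{1,2\}$ and rules out the cases $r = 0$ and $r \ge 3$, settling the existence part.

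For uniqueness when $r \in \{1,2\}$, I fix $(m,o)$ as above and count admissible edge-index assignments modulo deck transformations using Remark~\ref{rem:rscomb}. Since $\hat s_e = -1$ is independent of $s_e$, every value $s_e \in \Zb_r$ yields an admissible marking, giving $r$ admissible markings in total. Applying the deck transformation of Lemma~\ref{lem:moves}\,\ref{lem:moves:3} to the single face $f_1$ already shifts $s_e$ by an arbitrary element of $\Zb_r$ (with a sign determined by the orientation of $e$ relative to $f_1$), so $\Zb_r$ acts transitively on the admissible markings. Hence there is a single $\sim_{\mathrm{fix}}$-equivalence class, and Theorem~\ref{thm:rscomb} delivers a unique isomorphism class of $r$-spin structure.

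The main subtlety is the bookkeeping at the loop edge $e$: a single edge contributes simultaneously to $D_v$, $N_v^{\mathrm{start}}$ and $N_v^{\mathrm{end}}$, and its $\hat s_e$ is fixed to $-1$ by the loop convention of \eqref{eq:modedgeind}, independently of the chosen $s_e$. Once these multiplicities are handled correctly, both the existence dichotomy and uniqueness follow directly from \eqref{eq:vertexcond1} together with the transitivity of the deck action on $s_e$.
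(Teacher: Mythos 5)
Your proof is correct and follows essentially the same route as the paper: the same two-1-gon PLCW decomposition of $S^2$, the same evaluation $D_v=2$, $N_v=2$, $\hat{s}_e=-1$ reducing \eqref{eq:vertexcond1} to $-1\equiv 1\Mod{r}$, and the same use of the deck transformation of Lemma~\ref{lem:moves}\,\ref{lem:moves:3} together with Remark~\ref{rem:rscomb} to get uniqueness. Your explicit remark that the single loop edge contributes to $D_v$, $N_v^{\mathrm{start}}$ and $N_v^{\mathrm{end}}$ simultaneously is a correct and welcome clarification of the bookkeeping.
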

\begin{proof}
	Let us consider the sphere decomposed into two 1-gons,
	one edge $u$ and one vertex $v$
	as in Figure~\ref{fig:gcc} $c)$, with edge index $u$ 
	(cf. Notations~\ref{not:edgeindex}).
	Let us collect the ingredients for the vertex condition 
	\eqref{eq:vertexcond1}.
	The edge $u$ starts and ends at the vertex, therefore 
		$\hat{u}=-1$ 
	from \eqref{eq:modedgeind}.

	The number $N_v$ of in- and outgoing edges for $v$ is $N_v=1+1=2$, cf.\ \eqref{eq:Nv=Nvstart+Nvend}.
	The number of faces with $v$ in clockwise direction from their marked edge 
	is $D_v=2$,
	since the edge is marked for both faces.
	The vertex condition \eqref{eq:vertexcond1} then reads
	\begin{align*}
		-1\equiv 2-2+1&&\Mod{r},
	\end{align*}
	which holds if and only if $r=1$ or $r=2$.
	The edge index $u$ can be set arbitrarily
	by operation~\ref{lem:moves:3} in Lemma~\ref{lem:moves},
	and together with Remark~\ref{rem:rscomb} we see that
	for any two values of $u$ 
	the $r$-spin structures on the sphere are isomorphic.
\end{proof}

\begin{proposition}\label{prop:sigmagb}
	Let $\Sigma_{g,b}$ be a connected surface
	of genus $g$ with $b$ boundary components 
	and with maps $\lambda$ and $\mu$. 
	There exists an $r$-spin structure on $\Sigma_{g,b}$ if and only if
	\begin{align}
		\chi(\Sigma_{g,b}) \equiv& \sum_{u\in\pi_0(\partial\Sigma)}R_u&\Mod{r},
		\label{eq:prop:sigmagb}
	\end{align}
	where $\chi(\Sigma_{g,b})=2-2g-b$ denotes the Euler characteristic
	and $R_{u}$ was defined in \eqref{eq:bdrysign}. 
If \eqref{eq:prop:sigmagb} holds, 
the number $|\Rc^r(\Sigma_{g,b})_{\lambda,\mu}|$ of isomorphism classes of $r$-spin structures on $\Sigma_{g,b}$ is
given by:
\begin{center}
	\begin{tabular}{c|c|Sc}
		$r$&$b,g$&$|\Rc^r(\Sigma_{g,b})_{\lambda,\mu}|$\\
		\hline
		0&  $g=0$ and $b=1$
		&1\\
		 & else & infinite\\
		\hline
		$>0$&$b=0$& $r^{2g}$\\
		  &$b\ge1$&$r^{2g+b-1}$\\
	\end{tabular}
\end{center}
\end{proposition}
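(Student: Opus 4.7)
My approach is to count equivalence classes of admissible markings (Theorem~\ref{thm:rscomb} and Remark~\ref{rem:rscomb}) on a single efficient PLCW decomposition of $\Sigma_{g,b}$. The sphere case $g = b = 0$ reduces to Lemma~\ref{lem:sphere}, so I assume $g + b \ge 1$ and equip $\Sigma_{g,b}$ with its single-polygon decomposition: a single $(4g + 3b)$-gon glued so that there is $F = 1$ face, $V = b + 1$ vertices (one interior vertex $v_{\ast}$ and one per boundary component), and $E = 2g + 2b$ edges, partitioned into $2g$ genus edges (loops at $v_{\ast}$), $b$ leash edges (each from $v_{\ast}$ to a boundary vertex) and $b$ boundary edges (each a loop at the respective boundary vertex). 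One checks $V - E + F = 2 - 2g - b$, matching the Euler characteristic.

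The first step is to sum the vertex conditions \eqref{eq:vertexcond1} and \eqref{eq:vertexcond2} over all vertices. On the left, every non-loop edge contributes $s_e + (-1 - s_e) = -1$ from its two endpoints and every loop contributes $-1$ at its single endpoint (cf.\ \eqref{eq:modedgeind}), giving $-E$ in total. On the right one obtains $\sum_v D_v - \sum_v N_v + V - \sum_u R_u = F - 2E + V - \sum_u R_u$, using $\sum_v D_v = F$ and $\sum_v N_v = 2E$. The two together yield exactly $\chi(\Sigma_{g,b}) \equiv \sum_u R_u \pmod{r}$, so condition \eqref{eq:prop:sigmagb} is necessary. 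For sufficiency I would use the key observation that loops do not constrain their own index (since $\hat s_e = -1$ irrespective of $s_e$): at each boundary vertex the adjacent boundary loop drops out, so \eqref{eq:vertexcond2} involves only the unique adjacent leash edge and determines its index uniquely. The interior vertex condition is then automatic by the global relation, proving existence.

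For the counting, the $2g$ genus edges and the $b$ boundary edges are loops whose indices are unconstrained, while the $b$ leash indices are determined. So there are $r^{2g + b}$ admissible markings for $r > 0$. Applying the deck transformation (Lemma~\ref{lem:moves}\,(3)) to the unique face, each genus edge and each leash edge appears twice in the polygon boundary as an identified pair and so is unchanged, while the $b$ boundary loops appear only once and are shifted by $\pm k$. The deck action is therefore trivial when $b = 0$, giving $r^{2g}$ classes; for $b \ge 1$ the $\Zb_r$-action on the $b$-tuple of boundary indices is free (any nonzero $k$ changes at least one boundary-edge index), giving $r^{2g + b}/r = r^{2g + b - 1}$ classes. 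For $r = 0$ the identical argument with $\Zb$-valued indices produces infinitely many markings, and after taking the $\Zb$-quotient the orbit set has size $1$ exactly when $g = 0, b = 1$ (the single free parameter is killed by the deck action) and is infinite otherwise.

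The point I expect to require the most care is the interplay between the polygon combinatorics and the vertex conditions: verifying that each boundary vertex really sees exactly one leash edge and one boundary loop, that loops decouple from their own index through \eqref{eq:modedgeind}, and that the deck transformation on the single face shifts only the $b$ boundary loops because all other edges appear as identified pairs inside the polygon. Once these combinatorial facts are pinned down, the arithmetic of free parameters versus orbits reproduces the table entry by entry.
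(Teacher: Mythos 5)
Your proposal is correct and follows essentially the same route as the paper: the same single $(4g+3b)$-gon decomposition, the same identification of the free parameters $s_i,t_i,u_j$ with the leash indices $r_j$ forced by the boundary-vertex conditions, and the same analysis of the deck transformation (shifting only the $b$ boundary loops) to pass from $r^{2g+b}$ admissible markings to the stated counts. The only cosmetic difference is that you derive the necessity of \eqref{eq:prop:sigmagb} by summing the vertex conditions globally via $\sum_v D_v = F$ and $\sum_v N_v = 2E$, whereas the paper simply combines the two explicit vertex conditions \eqref{eq:vcondbdry} and \eqref{eq:vcondmiddle}; both are fine.
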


\begin{figure}[tb]
	\centering
	\def\svgwidth{15cm}
	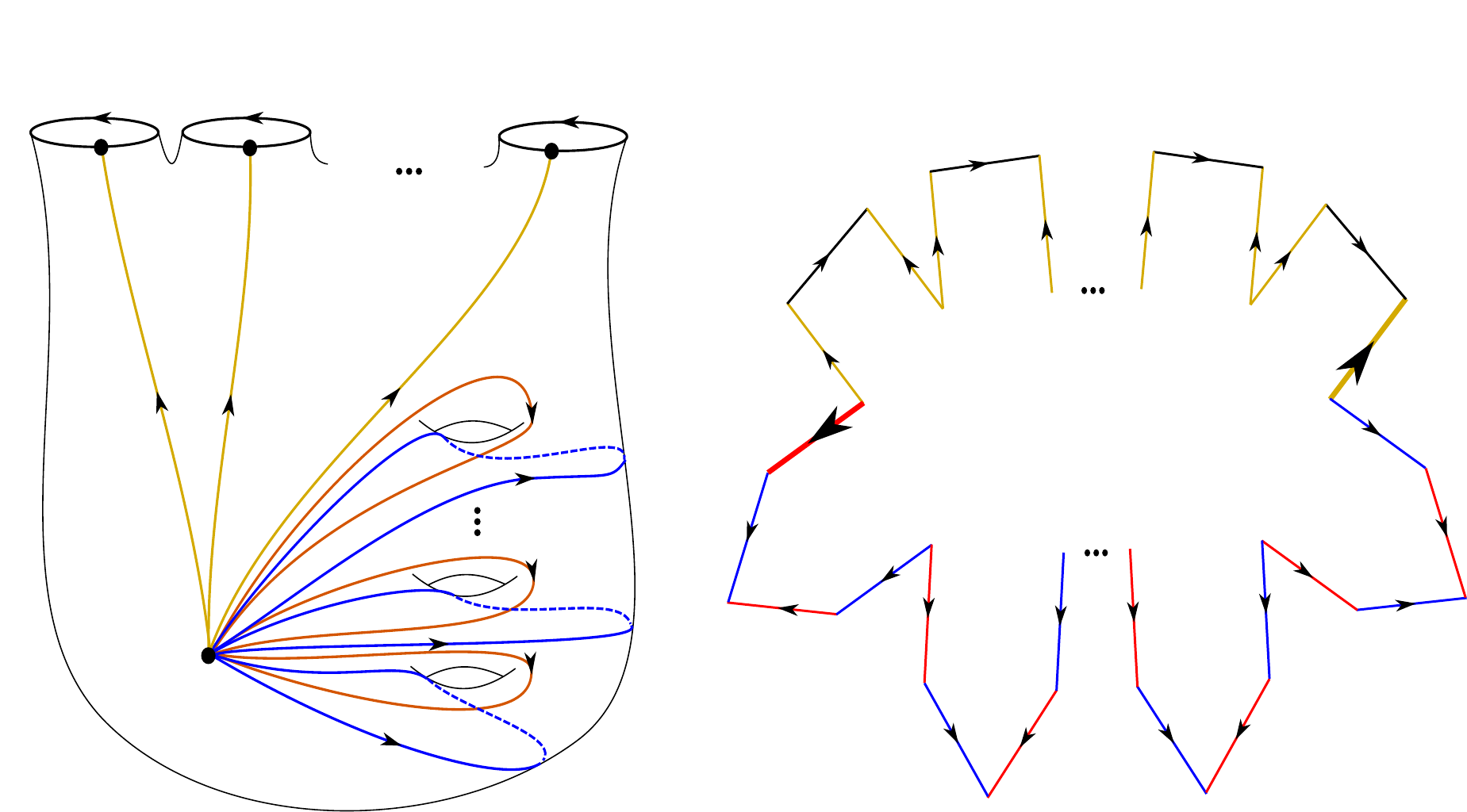
	\caption{ PLCW decomposition of $\Sigma_{g,b}$ for $g+b \ge 1$ 
	using only one face, shown after glueing (Fig.\,a) and before glueing (Fig.\,b) -- 
	the edges labeled with the same symbols are identified.
	In Fig.\,b, the bigger arrows indicate the marked edge, namely edge 
$r_1$ in case $g=0$ (and so $b>0$) and edge $s_1$ in case $g>0$.
      }
	\label{fig:sigmagb_param}
\end{figure}

A similar result has been obtained for the existence of $r$-spin structures 
on closed hyperbolic orbifolds for $r>0$ in \cite[Thm.\,3]{Geiges:2012rs}.
Note that in complex geometry, \eqref{eq:prop:sigmagb} (for $r>0$ and $b=0$) 
is just the condition for the existence of an $r$-th root of the canonical line bundle
(see e.g.~\cite{Witten:1993mm}).

\begin{proof}
The case $g=b=0$ has been discussed in Lemma~\ref{lem:sphere}, so we can assume $g+b\ge1$.
Decompose $\Sigma_{g,b}$ into a $(4g+3b)$-gon
consisting of $2g+2b$ inner edges, $b$ boundary edges, one inner vertex $v_0$ 
and $b$ boundary vertices $v_j$, $j=1,\dots,b$,
as shown in Figure~\ref{fig:sigmagb_param}~$a)$~and~$b)$.
Assign the edge indices $s_i$, $t_i$, $r_j$ and $u_j$, where $i=1,\dots,g$ and $j=1,\dots,b$.
Mark the edge $s_1$ if $g\neq0$ or the edge $r_1$ if $g=0$, 
see Figure~\ref{fig:sigmagb_param}\,$b)$.

We now evaluate the admissibility condition at each vertex.
For the boundary vertex $v_j$ there is the incoming inner
edge $r_j$ 
and the boundary edge $u_j$
which starts and ends at the same vertex $v_j$. Therefore by \eqref{eq:modedgeind}, 
relative to $v_j$ one has $\hat{r}_j=-r_j-1$ and $\hat{u}_j=-1$. 
For either of the two above mentioned markings 
$D_{v_j}=0$ and $N_{v_j}=3$, therefore we have
\begin{align}
	-r_j-1-1&\equiv 0-3+1-R_{u_j}&\Mod{r}&&\text{for $j=1,\dots,b$}.
	\label{eq:vcondbdry}
\end{align}
Thus the $r_j$ are uniquely fixed by the boundary parametrisation $\lambda,\mu$ to be $r_j \equiv R_{u_j} \Mod r$ for all $j$.

For the inner vertex $v_0$ there are $b$
edges leaving the vertex
and $2g$ edges which start and end there. Therefore by \eqref{eq:modedgeind},
relative to $v_0$ one has
$\hat{r}_j=r_j$ and $\hat{s}_i=\hat{t}_i=-1$. $D_{v_0}=1$ and $N_{v_0}=4g+b$,
and so
\begin{align}
	\sum_{j=1}^{b}r_j-2g&\equiv 1-(4g+b)+1&\Mod{r}\ .
	\label{eq:vcondmiddle}
\end{align}
Combining \eqref{eq:vcondbdry} and \eqref{eq:vcondmiddle} 
one obtains \eqref{eq:prop:sigmagb}.

By Remark~\ref{rem:rscomb}, for a fixed marking and orientation, 
edge index assignments up to deck transformations are in bijection with 
$r$-spin structures. From \eqref{eq:vcondbdry} and \eqref{eq:vcondmiddle}
every $(s_i,t_i,u_j)\in\left( \Zb_r \right)^{2g+b}$ gives an admissible edge index assignment.
A deck transformation on the face shifts the $u_j$ parameters simultaneously
and leaves the $s_i$ and $t_i$ parameters fixed. By 
a simple counting we get the number of isomorphism classes of $r$-spin structures.
\end{proof}

\begin{corollary}\label{cor:disk}
There is a unique $r$-spin structure on the disk with boundary condition $\lambda=2$ (ingoing boundary) or $\lambda=0$ (outgoing boundary), and no $r$-spin structure else.
\end{corollary}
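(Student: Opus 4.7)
The plan is to deduce this corollary as a direct specialisation of Proposition~\ref{prop:sigmagb} to the case $g=0$, $b=1$. First I would compute the Euler characteristic of the disk, which is $\chi(\Sigma_{0,1}) = 2 - 2 \cdot 0 - 1 = 1$. Writing $u$ for the unique boundary component, the existence condition \eqref{eq:prop:sigmagb} becomes
\begin{align*}
	1 \equiv R_u \Mod{r}\ .
\end{align*}

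Next I would unpack the definition of $R_u$ in \eqref{eq:bdrysign}: for an ingoing boundary, $R_u = \lambda - 1$, and the condition $1 \equiv \lambda - 1 \pmod r$ is equivalent to $\lambda \equiv 2 \pmod r$, which under our convention of representatives in $\{0,1,\dots,r-1\}$ (and $\{0,1,2,\dots\}$ for $r=0$) translates to $\lambda = 2$. For an outgoing boundary, $R_u = 1 - \mu$, and the condition $1 \equiv 1 - \mu \pmod r$ is equivalent to $\mu \equiv 0 \pmod r$, i.e.\ $\mu = 0$. For any other boundary condition, the condition in Part~1 of Proposition~\ref{prop:sigmagb} fails, so no $r$-spin structure exists.

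Finally, when the existence condition is satisfied, I would read off the number of isomorphism classes from the table in Proposition~\ref{prop:sigmagb}: for $r > 0$ and $b = 1$ we have $r^{2g+b-1} = r^{0} = 1$, and for $r=0$ with $g=0$ and $b=1$ we again get $1$. In either case the $r$-spin structure is unique up to isomorphism, which is exactly the statement of the corollary.

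There is no real obstacle here: the entire content has already been established in Proposition~\ref{prop:sigmagb}, and the only work is the bookkeeping translation between the vertex data $R_u$ and the boundary labels $\lambda, \mu$ on the disk.
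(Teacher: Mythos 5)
Your proposal is correct and is exactly the argument the paper intends: the corollary is stated immediately after Proposition~\ref{prop:sigmagb} with no separate proof, precisely because it is the specialisation $g=0$, $b=1$ with $R_u$ unpacked via \eqref{eq:bdrysign}, as you do. The only superfluous remark is the appeal to chosen representatives of $\Zb_r$ — the conditions $\lambda=2$ and $\mu=0$ are equations in $\Zb_r$ itself, so no choice of representative is needed.
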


	Let $R_j:=R_{u_j}$ from \eqref{eq:bdrysign} and
	let us denote the $r$-spin structure on $\Sigma_{g,b}$ 
	given by the parameters $s_i, t_i, u_j\in\Zb_r$ 
	for $i=1,\dots,g$ and $j=1,\dots,b$ from
	Figure~\ref{fig:sigmagb_param}
	by 
\begin{align}
\label{eq:sigmagb-def}
	\Sigma_{g,b}(s_i, t_i, u_j,R_j) 
\end{align}
(and recall from 
Notation~\ref{not:edgeindex} that the same symbols denote 
edges and the assigned edge indices).

\begin{figure}[tb]
	\centering
	\def\svgwidth{16cm}
	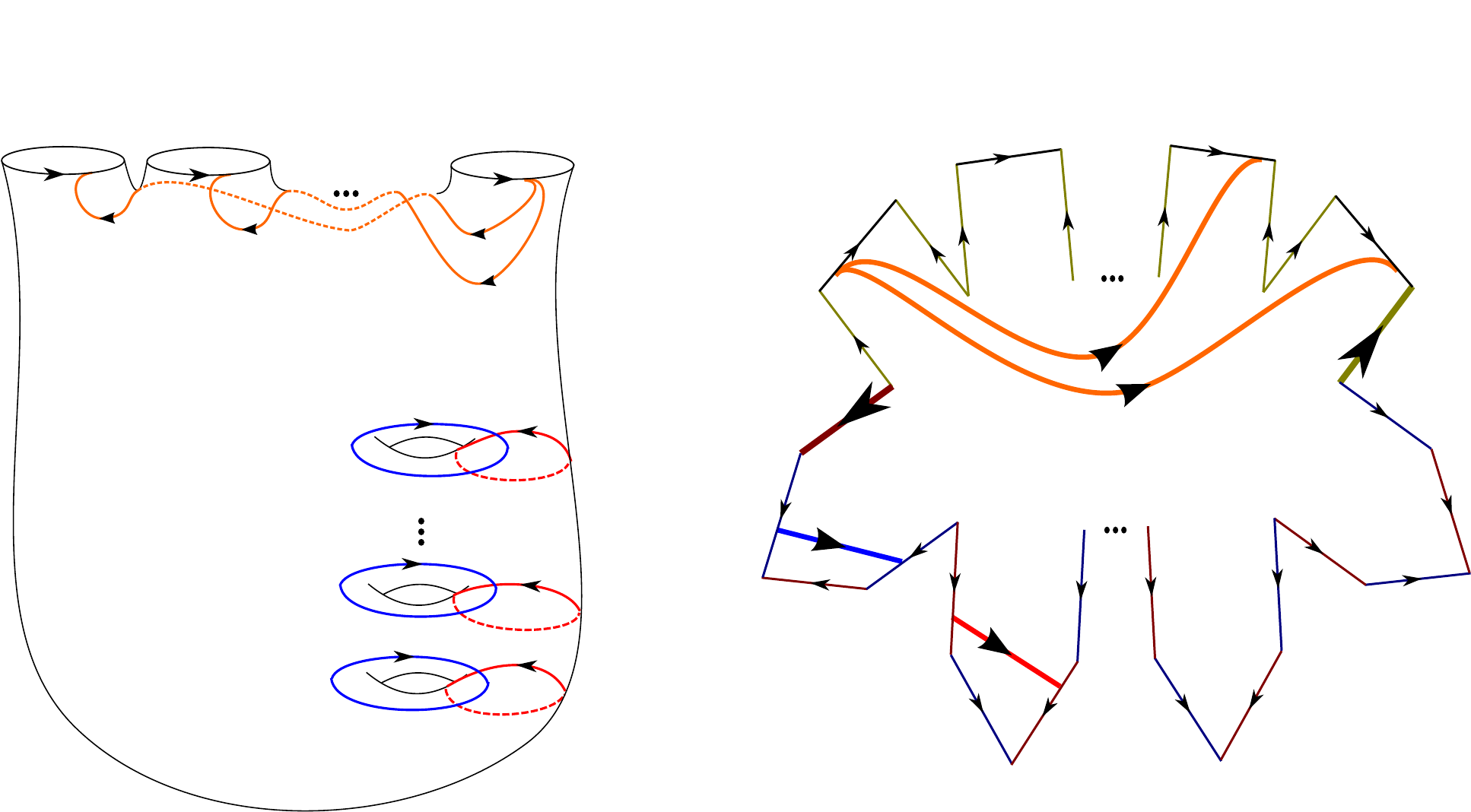
	\caption{$a)$ A set of curves in $\Sigma_{g,b}$: $\setc*{a_i,b_i,c_j}{i=1,\dots,g;j=1,\dots,b-1}$.
	$b)$ These curves in the PLCW decomposition of $\Sigma_{g,b}$ 
	(cf.\ Figure~\ref{fig:sigmagb_param}). 
		The bigger arrows on the edges
		show the marked edge: $r_1$ for $b>0$ and $s_1$ for $b=0$.}
	\label{fig:sigmagb_arcs}
\end{figure}

\begin{proposition}\label{prop:indexhol-sigma-bg}
	The holonomies of the curves in $\Sigma_{g,b}$ shown
	in Figure~\ref{fig:sigmagb_arcs}~$a)$ are
	\begin{align}
		\begin{aligned}
		\zeta(a_i)&=s_i,&
		\zeta(b_i)&=t_i,&
		\zeta(c_j)&=
			u_j-u_b+1,&
		\zeta(\partial_j)&=1-\lambda_j,
	\end{aligned}\label{eq:lem:indexhol}
	\end{align}
	for $i=1,\dots,g$ and $j=1,\dots,b-1$.
\end{proposition}
\begin{proof}
	We only show the calculation of the latter two holonomies.
Consider the single face $f$ shown in Figure~\ref{fig:sigmagb_arcs}~$b)$.
	The tangent vectors of the
	edge $u_b$ and the loop $c_j$ 
	point in the same direction and the loop starts at this edge
	($u_b=e_{\mathrm{start}}$),	
	therefore 
		$\hat{s}_{u_b}^f=\hat{s}_{e_{\mathrm{start}}}=-u_b-1$;
	the tangent vectors of 
	the edge $u_j$ and the loop $c_j$
	point in the same direction and the loop ends at this edge	
	($u_j=e_{\mathrm{end}}$),	
	therefore 
		$\hat{s}_{u_j}^f=\hat{s}_{e_{\mathrm{end}}}=u_j$;
	the clockwise vertex determined by the marked edge of the face $f$ is on the right 
	side of the curve $c_j$ so $\hat{\delta}_{f}^{c_j}=1$. Taking the sum of all these we get
	$\zeta(c_j)=u_j+1-u_b-1+1=u_j-u_b+1$.
	The edge $r_j$ and the loop $\partial_j$ cross negatively and the clockwise vertex is on the right 
	side of the loop, so we get $\zeta(\partial_j)=-r_j-1+1=1-\lambda_j$.
\end{proof}

\section{State-sum construction of \texorpdfstring{$r$}{r}-spin TFTs}\label{sec:tft}

Our first application of the combinatorial description of $r$-spin structures 
is a state-sum construction of $r$-spin TFTs,
see \cite{Barrett:2013sp,Novak:2014sp,Gaiotto:2016spin} for the $2$-spin case and \cite{Novak:2015phd} for general $r$.
We generalise the construction in \cite{Novak:2015phd} from triangulations to PLCW-decompositions, 
which are much more convenient for explicit computations. 
In Sections~\ref{sec:alg-prelim-statesumTFT}~and~\ref{sec:gradedcenter} 
we present some algebraic preliminaries, and in Section~\ref{sec:state-sum-constr} 
we explain how suitable Frobenius algebras produce an $r$-spin TFT via a 
state-sum construction (Theorem~\ref{thm:tft}). In Section~\ref{sec:evaluation} 
we compute the value of the state-sum TFT on connected $r$-spin bordisms.

\subsection{Algebraic notions}\label{sec:alg-prelim-statesumTFT}

\begin{figure}[tb]
	\centering
	\def\svgwidth{7cm}
	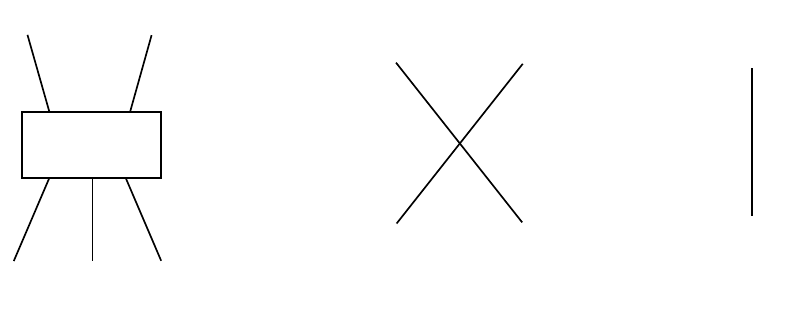
	\caption{String diagram notion of a morphism $f\in\Sc(A\otimes B \otimes C,D\otimes E)$, 
		the symmetric braiding $c_{A,B}$ and the identity $\id_{A}$.}
	\label{fig:f_braiding}
\end{figure}

\begin{figure}[tb]
	\centering
	\def\svgwidth{8cm}
	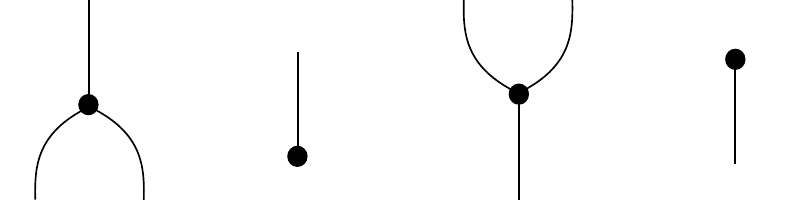
	\caption{String diagrams we will use for the structure morphisms of a Frobenius algebra.}
	\label{fig:frob_morph}
\end{figure}

Let $\Sc$ denote a strict symmetric monoidal category with tensor product $\otimes$,
tensor unit $\Ib$ and braiding $c$.
We use the graphical calculus as shown in Figure~\ref{fig:f_braiding},
and we will omit the labels for objects if they are understood, as e.g.\ in Figure~\ref{fig:frob_morph}.

An object $A\in\Sc$ together with morphisms
$\mu\in\Sc(A\otimes A, A)$ (multiplication), $\eta\in\Sc(\Ib,A)$ (unit), 
$\Delta\in\Sc(A,A\otimes A)$ (comultiplication) and $\eps\in\Sc(A,\Ib)$ (counit), see Figure~\ref{fig:frob_morph}, is a \textsl{Frobenius algebra} if the following relations hold:
\begin{align}
	\begin{aligned}
	\def\svgwidth{6.5cm}
\begingroup%
  \makeatletter%
  \providecommand\color[2][]{%
    \errmessage{(Inkscape) Color is used for the text in Inkscape, but the package 'color.sty' is not loaded}%
    \renewcommand\color[2][]{}%
  }%
  \providecommand\transparent[1]{%
    \errmessage{(Inkscape) Transparency is used (non-zero) for the text in Inkscape, but the package 'transparent.sty' is not loaded}%
    \renewcommand\transparent[1]{}%
  }%
  \providecommand\rotatebox[2]{#2}%
  \ifx\svgwidth\undefined%
    \setlength{\unitlength}{325.07521897bp}%
    \ifx\svgscale\undefined%
      \relax%
    \else%
      \setlength{\unitlength}{\unitlength * \real{\svgscale}}%
    \fi%
  \else%
    \setlength{\unitlength}{\svgwidth}%
  \fi%
  \global\let\svgwidth\undefined%
  \global\let\svgscale\undefined%
  \makeatother%
  \begin{picture}(1,0.32188026)%
    \put(0,0){\includegraphics[width=\unitlength]{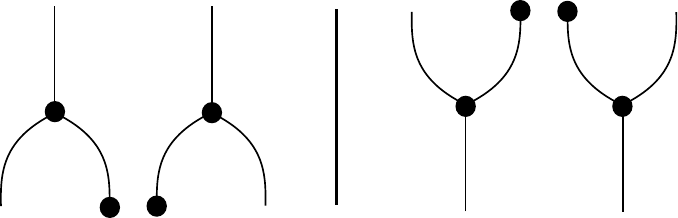}}%
    \put(0.17996867,0.15583766){\color[rgb]{0,0,0}\makebox(0,0)[lb]{\smash{=}}}%
    \put(0.39443207,0.15583766){\color[rgb]{0,0,0}\makebox(0,0)[lb]{\smash{=}}}%
    \put(0.79209679,0.15583766){\color[rgb]{0,0,0}\makebox(0,0)[lb]{\smash{=}}}%
    \put(0.56008363,0.15583766){\color[rgb]{0,0,0}\makebox(0,0)[lb]{\smash{=}}}%
  \end{picture}%
\endgroup%

	\label{eq:unit_counit}
	\end{aligned}\\
	\begin{aligned}
	\def\svgwidth{6.5cm}
\begingroup%
  \makeatletter%
  \providecommand\color[2][]{%
    \errmessage{(Inkscape) Color is used for the text in Inkscape, but the package 'color.sty' is not loaded}%
    \renewcommand\color[2][]{}%
  }%
  \providecommand\transparent[1]{%
    \errmessage{(Inkscape) Transparency is used (non-zero) for the text in Inkscape, but the package 'transparent.sty' is not loaded}%
    \renewcommand\transparent[1]{}%
  }%
  \providecommand\rotatebox[2]{#2}%
  \ifx\svgwidth\undefined%
    \setlength{\unitlength}{340.47649238bp}%
    \ifx\svgscale\undefined%
      \relax%
    \else%
      \setlength{\unitlength}{\unitlength * \real{\svgscale}}%
    \fi%
  \else%
    \setlength{\unitlength}{\svgwidth}%
  \fi%
  \global\let\svgwidth\undefined%
  \global\let\svgscale\undefined%
  \makeatother%
  \begin{picture}(1,0.41495524)%
    \put(0,0){\includegraphics[width=\unitlength]{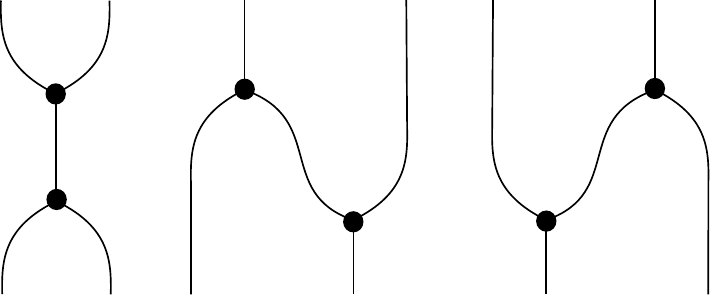}}%
    \put(0.17793549,0.19314058){\color[rgb]{0,0,0}\makebox(0,0)[lb]{\smash{=}}}%
    \put(0.61511215,0.19314058){\color[rgb]{0,0,0}\makebox(0,0)[lb]{\smash{=}}}%
  \end{picture}%
\endgroup%

	\label{eq:frob_rel}
	\end{aligned}
\end{align}
These relations imply that a Frobenius algebra is in particular an associative 	algebra
and coassociative coalgebra,
see \cite[Prop.\,2.3.24]{Kock:2004fa}.
For more details on the definition of algebras, coalgebras and Frobenius algebras in monoidal categories we refer to e.g.\ \cite{Fuchs:2008fa}.

\begin{figure}[tb]
	\centering
	\def\svgwidth{10cm}
	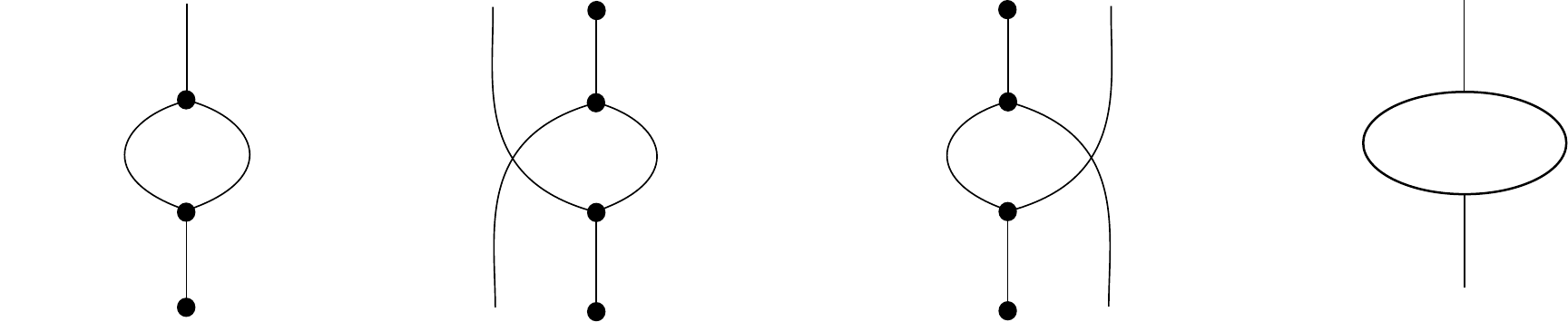
	\caption{The window element $\tau$, the Nakayama automorphism $N$, its inverse $N^{-1}$
	\cite[Sec.\,5.3]{Novak:2015phd}
		and our string diagram abbreviation for the $k$'th power of $N$.}
	\label{fig:window_nakayama}
\end{figure}

For a Frobenius algebra $A$ we define the \textsl{window element} 
$\tau=\mu\circ\Delta\circ\eta$ \cite{Lauda:2007oc} and the 
\textsl{ Nakayama automorphism}
$N=\left( \id_{A}\otimes (\eps\circ\mu) \right)\circ \left(c_{A,A}\otimes\id_{A}\right)
\circ\left( \id_A\otimes(\Delta\circ\eta) \right)$,
see Figure~\ref{fig:window_nakayama}.
Then $\tau$ is central (as follows from an easy calculation)
and $N$ is a morphism of Frobenius algebras 
(see \cite{Fuchs:2008fa} and \cite[Prop.\,4.5]{Novak:2014sp}).
$A$ is called \textsl{symmetric} if $\varepsilon\circ\mu\circ c_{A,A}=\varepsilon\circ\mu$.
	It can be shown from a straightforward calculation that $A$ is symmetric if and only if $N=\id_A$.

\begin{lemma} Let $A\in\Sc$ be a Frobenius algebra with Nakayama automorphism $N$.
	Then for every $n\in\Zb$
		\begin{align}
			\begin{aligned}
				\def\svgwidth{9cm}
\begingroup%
  \makeatletter%
  \providecommand\color[2][]{%
    \errmessage{(Inkscape) Color is used for the text in Inkscape, but the package 'color.sty' is not loaded}%
    \renewcommand\color[2][]{}%
  }%
  \providecommand\transparent[1]{%
    \errmessage{(Inkscape) Transparency is used (non-zero) for the text in Inkscape, but the package 'transparent.sty' is not loaded}%
    \renewcommand\transparent[1]{}%
  }%
  \providecommand\rotatebox[2]{#2}%
  \ifx\svgwidth\undefined%
    \setlength{\unitlength}{468.22349292bp}%
    \ifx\svgscale\undefined%
      \relax%
    \else%
      \setlength{\unitlength}{\unitlength * \real{\svgscale}}%
    \fi%
  \else%
    \setlength{\unitlength}{\svgwidth}%
  \fi%
  \global\let\svgwidth\undefined%
  \global\let\svgscale\undefined%
  \makeatother%
  \begin{picture}(1,0.44989261)%
    \put(0,0){\includegraphics[width=\unitlength]{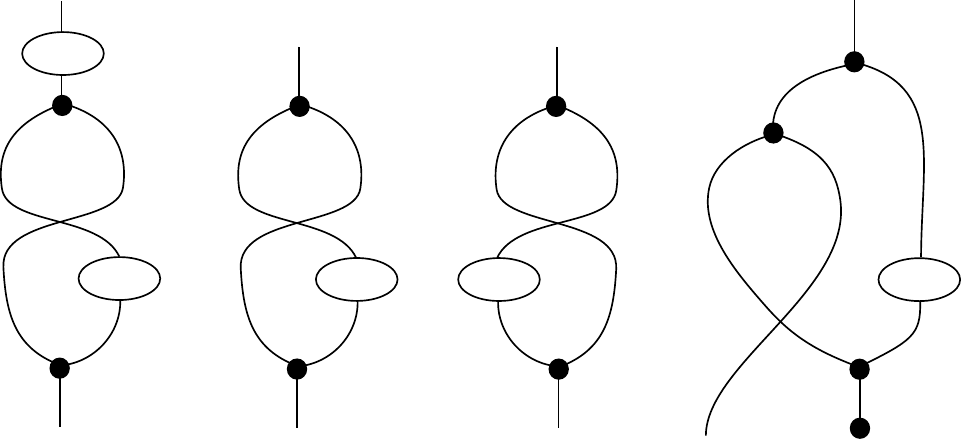}}%
    \put(0.35873861,0.15280151){\color[rgb]{0,0,0}\makebox(0,0)[lb]{\smash{$n$}}}%
    \put(0.41221271,0.2187254){\color[rgb]{0,0,0}\makebox(0,0)[lb]{\smash{$=$}}}%
    \put(0.48325646,0.15324201){\color[rgb]{0,0,0}\makebox(0,0)[lb]{\smash{$-n$}}}%
    \put(0.11207393,0.1542461){\color[rgb]{0,0,0}\makebox(0,0)[lb]{\smash{$n$}}}%
    \put(0.16890364,0.21965186){\color[rgb]{0,0,0}\makebox(0,0)[lb]{\smash{$=$}}}%
    \put(0.05566595,0.38492599){\color[rgb]{0,0,0}\makebox(0,0)[lb]{\smash{$n$}}}%
    \put(0.65109628,0.21849301){\color[rgb]{0,0,0}\makebox(0,0)[lb]{\smash{$=$}}}%
    \put(0.93141129,0.15314565){\color[rgb]{0,0,0}\makebox(0,0)[lb]{\smash{$n$}}}%
  \end{picture}%
\endgroup%

				\label{eq:plem}
			\end{aligned}
		\end{align}
	\label{lem:plem}
\end{lemma}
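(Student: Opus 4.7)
The plan is to reduce everything to the $n=1$ identities and then bootstrap to arbitrary $n \in \mathbb{Z}$ by induction, using only that $N$ is an invertible Frobenius algebra morphism (as recalled just before the lemma).

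First I would establish the four identities in the case $n=1$ directly from the definition $N = (\id_A \otimes (\varepsilon \circ \mu)) \circ (c_{A,A} \otimes \id_A) \circ (\id_A \otimes (\Delta \circ \eta))$ and the Frobenius relations \eqref{eq:unit_counit}--\eqref{eq:frob_rel}. Concretely, the key algebraic inputs are: (i) the "snake" form of the Frobenius pairing and copairing $\varepsilon \circ \mu$ and $\Delta \circ \eta$, giving a nondegenerate duality; (ii) the fact, cited from \cite{Fuchs:2008fa} and \cite[Prop.\,4.5]{Novak:2014sp}, that $N$ is a Frobenius algebra morphism, i.e.\ it commutes with $\mu$, $\eta$, $\Delta$, $\varepsilon$ (after inserting the appropriate twist on the second factor in the Frobenius pairing case); and (iii) the defining relation $\varepsilon \circ \mu = \varepsilon \circ \mu \circ (N \otimes \id_A) \circ c_{A,A}$, which is exactly what makes $N$ the "twist" needed to turn the Frobenius pairing into a symmetric one. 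In graphical terms, each of the four pictures in \eqref{eq:plem} is the statement that an $N$ inserted on one strand can be slid through a product/coproduct, or traded for an $N^{-1}$ on the dual strand by using the pairing.

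Next I would extend to $n \geq 0$ by induction. The base case $n=0$ is trivial since $N^0 = \id_A$. For the inductive step, write $N^{n+1} = N \circ N^n$ (as defined in Figure~\ref{fig:window_nakayama}) and apply the $n=1$ version to the outer $N$ while using the inductive hypothesis on the inner $N^n$. Because $N$ is an algebra and coalgebra morphism, composing with $\mu$, $\Delta$, $\eta$, $\varepsilon$ interacts with $N^n$ exactly as it does with a single $N$, so the inductive step is formal.

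For $n < 0$ I would use invertibility of $N$: since $N$ is a Frobenius algebra automorphism, $N^{-1}$ is too, and it satisfies the mirror image of the defining relation, namely $\varepsilon \circ \mu = \varepsilon \circ \mu \circ (\id_A \otimes N^{-1}) \circ c_{A,A}$. Running the same induction starting from $n = -1$ gives the identities for all negative $n$.

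I expect the main nuisance, rather than a genuine obstacle, to be the bookkeeping of which strand each $N^n$ sits on in the four subfigures of \eqref{eq:plem}: one of them (the $n \leftrightarrow -n$ identity) uses the Frobenius pairing to move $N^n$ across a cap/cup, while the others are purely "slide" identities through $\mu$ or $\Delta$. Once the $n=1$ pictures are verified by drawing them out with the Frobenius axioms, the induction and the passage to negative exponents are routine, so no deeper structural input is needed beyond what has already been cited.
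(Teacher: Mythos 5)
Your proposal is correct and matches the substance of the paper's (very terse) proof: the paper simply cites \cite[Lem.\,5.12]{Novak:2015phd} for the first two identities and asserts that the third is a direct calculation, and that cited proof is exactly the kind of direct graphical verification from the Frobenius axioms and the fact that $N$ is a Frobenius algebra automorphism that you outline, with the extension to general $n\in\Zb$ by induction and invertibility being routine as you say. The only point to watch is the one you flag yourself, namely fixing the correct side/sign conventions in the twist relation $\eps\circ\mu = \eps\circ\mu\circ c_{A,A}\circ(\id_A\otimes N)$ when sliding $N^{\pm n}$ across the pairing and copairing.
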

\begin{proof}
The first and second equations
are proven in \cite[Lem.\,5.12]{Novak:2015phd}.
	The third equation follows from a direct calculation.
\end{proof}

A morphism $\kappa : \Ib \to A$ is called \textsl{invertible} 
if there is a morphism $\kappa' : \Ib \to A$ such that 
$\mu \circ (\kappa \otimes \kappa') = \eta = \mu \circ (\kappa' \otimes \kappa)$. 
In this case we write $\kappa^{-1}$ instead of $\kappa'$ for the unique inverse.

Let $r\in\Zb_{\ge0}$, $(A,\mu,\eta,\Delta,\varepsilon)$ be a Frobenius algebra in $\Sc$ 
with invertible window element $\tau$ 
and with Nakayama automorphism $N$,
such that $N^r=\id_A$ (for $r=0$ this last condition is empty). 
A Frobenius algebra with $N^r=\id_A$
is called a \textsl{$\Lambda_r$-Frobenius algebra} 
in \cite[Prop.\,I.41]{Dyckerhoff:2015csg}. 
Define 
\begin{align}
	P_\lambda:=(\tau^{-1}\cdot(-))\circ\mu\circ c_{A,A}\circ(\id\otimes N^{1-\lambda})\circ\Delta
	\quad \in \End(A) \ .
	\label{eq:pdef}
\end{align}
We collect some properties of $P_\lambda$ in the following lemma.

\begin{lemma} \label{lem:gradedfa1}
$P_{\lambda}$ is an idempotent, and
	for any $\lambda_1,\lambda_2\in\Zb_r$ one has that:
	\begin{align}
	N \circ P_{\lambda_1} &= P_{\lambda_1} \circ N
		\label{eq:gradednaka} 
		\\
		\mu\circ\left( P_{\lambda_1}\otimes P_{\lambda_2} \right)&=
		P_{\lambda_1+\lambda_2}\circ\mu\circ\left( P_{\lambda_1}\otimes P_{\lambda_2} \right) \ ,
		&
		\eta&=P_0\circ\eta \ ,
		\label{eq:gradedmultunit1} \\
		\left( P_{\lambda_1}\otimes P_{\lambda_2} \right)\circ\Delta&=
		\left( P_{\lambda_1}\otimes P_{\lambda_2} \right)\circ\Delta\circ P_{\lambda_1+\lambda_2-2}  \ ,
	&
		\eps&=\eps\circ P_2 \ .
		\label{eq:gradedcomultunit1}
	\end{align}
\end{lemma}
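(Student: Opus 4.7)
The proof is entirely diagrammatic: every identity will be verified by manipulating string diagrams using the Frobenius relations \eqref{eq:unit_counit} and \eqref{eq:frob_rel}, naturality of the symmetric braiding, the three identities of Lemma~\ref{lem:plem} (which tell us how powers of $N$ commute through $\mu$, $\Delta$ and the Frobenius structure), and the fact that the window element $\tau$ is central, so the scalar factor $\tau^{-1}\cdot(-)$ in the definition \eqref{eq:pdef} can be slid freely across any diagram built from $\mu$, $\Delta$, $\eta$, $\varepsilon$ and $N$.

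The plan is to handle the claims in the following order. First I would verify the two simplest assertions. For $N\circ P_{\lambda_1}=P_{\lambda_1}\circ N$, recall that $N$ is a Frobenius algebra morphism and thus commutes with $\mu$, $\Delta$ and with itself; combined with the first two identities of Lemma~\ref{lem:plem}, one can drag an external $N$ through each constituent of $P_{\lambda_1}$, with the shifts in the exponent of $N^{1-\lambda_1}$ cancelling on the two sides of the composite. For $\eta=P_0\circ\eta$, one expands $P_0\circ\eta=\tau^{-1}\cdot\mu\circ c\circ(\mathrm{id}\otimes N)\circ\Delta\circ\eta$ and recognises the right-hand side, by the graphical definition of $N$ in Figure~\ref{fig:window_nakayama} together with the Frobenius and unit relations, as $\tau^{-1}\cdot\tau\cdot\eta=\eta$. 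The identity $\varepsilon=\varepsilon\circ P_2$ is the $\varepsilon$-dual of this computation, and falls out once one uses that $N^{1-2}=N^{-1}$ is precisely what produces the dual configuration.

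Next I would address the grading identities \eqref{eq:gradedmultunit1} and \eqref{eq:gradedcomultunit1}. For the multiplicative version, draw $\mu\circ(P_{\lambda_1}\otimes P_{\lambda_2})$ as a diagram with two coproducts feeding into a single product. Use the Frobenius relation \eqref{eq:frob_rel} to rearrange the inner structure into one that contains a single copy of $\Delta$ followed by $\mu\circ c$, and push all Nakayama factors onto a common strand using the first two identities of Lemma~\ref{lem:plem}; the exponents add to give $N^{1-(\lambda_1+\lambda_2)}$ up to a shift that is exactly absorbed by the one extra $\tau^{-1}$ coming from $P_{\lambda_1+\lambda_2}$. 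The coproduct version \eqref{eq:gradedcomultunit1} is proved by the mirror-image argument, exploiting that the Frobenius and Nakayama structures are self-dual.

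The main obstacle, and the step I would do last, is the idempotency $P_\lambda\circ P_\lambda=P_\lambda$. Stacking two copies of the defining diagram yields a configuration with two coproducts, two products and two braidings, flanked by $\tau^{-2}$. The strategy is to use one Frobenius relation to fuse the middle $\Delta\mu$ pair into a $(\mu\otimes\mathrm{id})(\mathrm{id}\otimes\Delta)$ sandwich, collect the two powers of $N^{1-\lambda}$ onto one strand using the third identity of Lemma~\ref{lem:plem}, and then use the counit to close off one of the two resulting loops. What survives is a single copy of $\mu\circ c\circ(\mathrm{id}\otimes N^{1-\lambda})\circ\Delta$ with one extra scalar factor of $\tau=\mu\circ\Delta\circ\eta$, which cancels one of the two $\tau^{-1}$'s and leaves $P_\lambda$. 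The delicate point is bookkeeping: one must verify that the braidings and $N$-powers conspire to produce $N^{1-\lambda}$ rather than some other exponent, and that exactly one factor of $\tau$ (and no additional Nakayama twist) emerges from closing the loop; this is where the hypothesis that $\tau$ is invertible and central is essential.
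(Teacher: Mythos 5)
Your plan follows essentially the same route as the paper: each identity is verified by a direct string-diagram computation from the definition \eqref{eq:pdef}, the Frobenius relations, Lemma~\ref{lem:plem}, the fact that $N$ is a Frobenius algebra automorphism, and centrality/invertibility of $\tau$ (the paper simply outsources the idempotency and multiplicativity computations to \cite[Lem.\,5.12, Lem.\,6.8]{Novak:2015phd}). The one organisational difference is that the paper obtains the first identity of \eqref{eq:gradedcomultunit1} from \eqref{eq:gradedmultunit1} via the explicit duality relation \eqref{eq:pdual}, which in particular makes the shift to $\lambda_1+\lambda_2-2$ transparent, whereas you invoke a mirror-image argument; both amount to the same self-duality of the Frobenius/Nakayama structure.
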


\begin{proof}
That $P_\lambda$ is an idempotent is a direct generalisation of \cite[Lem.\,5.12\,(1)]{Novak:2015phd}.
The additional $(\tau^{-1}\cdot(-))$ removes the ``bubble'' $\mu\circ\Delta$.
The identity \eqref{eq:gradednaka} is immediate from the definition of $P_\lambda$ in \eqref{eq:pdef} and the fact that $N$ is an automorphism of Frobenius algebras.
The first identity in \eqref{eq:gradedmultunit1} is a more general version of \cite[Lem.\,6.8]{Novak:2015phd} and the proof works along the same lines. To 
show the second identity in each of 
\eqref{eq:gradedmultunit1} and \eqref{eq:gradedcomultunit1} just write out the definition~of $P_{\lambda}$, $N$ and $N^{-1}$. 
For the first identity in \eqref{eq:gradedcomultunit1} use \eqref{eq:gradedmultunit1} together with  
	\begin{align}
		\left( (\eps\circ\mu)\otimes\id_{A}\right)\circ
		\left( \id_{A}\otimes P_{\lambda}\otimes\id_{A} \right)\circ
		\left( \id_{A}\otimes(\Delta\circ\eta) \right)=P_{2-\lambda},
		\label{eq:pdual}
	\end{align}
	which follows from a direct calculation.
\end{proof}

\subsection{The $\Zb_r$-graded center}\label{sec:gradedcenter}

Let $A \in \Sc$ be a Frobenius algebra with invertible window element.
Let $\Sc$ be furthermore additive 
(in particular, finite direct sums distribute over tensor products)
and assume that the idempotents $P_\lambda$ split in $\Sc$, i.e.\
\begin{align}
	&P_\lambda=
	\left[ A\xrightarrow{\pi_{\lambda}}Z_{\lambda}\xrightarrow{\iota_{\lambda}}A \right]\ , 
	&& \left[ Z_{\lambda}\xrightarrow{\iota_{\lambda}}A\xrightarrow{\pi_{\lambda}}Z_{\lambda} \right] =\id_{Z_{\lambda}}\ ,
	\label{eq:splitidempot}
\end{align}
for some object $Z_{\lambda}\in\Sc$. For $r=0$ assume furthermore that $\Sc$ has countably infinite direct sums which distribute over the tensor product.
We can now define:

\begin{definition}\label{def:gradedcenter}
	Let $r \in \Zb_{\ge 0}$.  The \textsl{$\Zb_r$-graded center} of 
	a Frobenius algebra $A$ with invertible window element and which satisfies $N^r=\id$
	is the direct sum
\begin{align}
	Z^r(A):=\bigoplus_{\lambda\in\Zb_r}Z_{\lambda}\ .
	\label{eq:def:gradedcenter}
\end{align}
This is a $\Zb_r$-graded object and we call $\lambda\in\Zb_r$ the degree of $Z_{\lambda}$.
\end{definition}

Next we will endow $Z^r(A)$ with an algebra structure induced by $A$.
Write
\begin{align}
	e_{\lambda}:Z_{\lambda}\to Z^r(A)
	\label{eq:embeddings}
\end{align}
for the embeddings of the summands in \eqref{eq:embeddings} and $p_{\lambda}$
for the induced projections which satisfy
\begin{align}
	\left[ Z_{\lambda_1}\xrightarrow{e_{\lambda_1}}Z^r(A)\xrightarrow{p_{\lambda_2}}Z_{\lambda_2} \right]
	=\delta_{\lambda_1,\lambda_2}\id_{Z_{\lambda_1}}\ .
	\label{eq:orthproj}
\end{align}
Lemma~\ref{lem:gradedfa1} suggests to define, for $\lambda_1,\lambda_2\in\Zb_r$,
\begin{align}
	\mu_{\lambda_1,\lambda_2}&:=
	\left[ Z_{\lambda_1}\otimes Z_{\lambda_2} \xrightarrow{\iota_{\lambda_1}\otimes\iota_{\lambda_2}}\right.
	A\otimes A \xrightarrow{\mu} A
	\left.\xrightarrow{\pi_{\lambda_1+\lambda_2}} Z_{\lambda_1+\lambda_2} \right]\ .
	\label{eq:gradedmult3}
\end{align}
By the universal property of direct sums (which in the countably infinite case for $r=0$ still distribute over $\otimes$ by our assumptions) 
there is a unique map 
\begin{align}
	\bar{\mu} : Z^r(A)\otimes Z^r(A) \longrightarrow Z^r(A) 
\end{align}
which satisfies $\bar\mu \circ (e_{\lambda_1} \otimes e_{\lambda_2}) = e_{\lambda_1+\lambda_2} \circ \mu_{\lambda_1,\lambda_2}$.
Let us furthermore define
\begin{align}
	\bar{\eta} :=
	\left[ \Ib\xrightarrow{\eta}A\xrightarrow{\pi_0} Z_0\xrightarrow{e_0} Z^r(A) \right]\ ,
	\label{eq:gradedunit2}
\end{align}
The morphisms $\bar{\mu}$ and $\bar{\eta}$ are degree preserving.
It is straightforward to verify that $Z^r(A)$ together with $\bar\mu$ and $\bar\eta$ 
becomes an associative unital $\Zb_r$-graded algebra in $\Sc$.

One can restrict the Nakayama automorphism of $A$ on the $Z_{\lambda}$'s by
\begin{align}
	N_{Z_{\lambda}}:=
	\left[ Z_{\lambda}\xrightarrow{\iota_{\lambda}}A\xrightarrow{N}A\xrightarrow{\pi_{\lambda}} Z_{\lambda} \right]
	\label{eq:nrestricted}
\end{align}
As in \cite[Lem.\,5.12/3]{Novak:2015phd} on verifies that
\begin{align}
		N_{Z_{\lambda}}^{\mathrm{gcd}(1-\lambda,r)}=\id_{Z_{\lambda}} \ .
		\label{eq:norder}
\end{align}
Recall from the introduction that $\mathrm{gcd}(a,b)$ denotes 
the non-negative generator of the ideal $\langle a,b \rangle \subset \Zb$. 
In particular, for $r=0$ we have 
$\mathrm{gcd}(1-\lambda,r)=|1-\lambda|$. 
The product $\bar\mu$ is in general not commutative, 
but a simple computation shows that its components satisfy:
	\begin{align}
		\begin{aligned}
		\mu_{\lambda_1,\lambda_2}\circ c_{Z_{\lambda_2},Z_{\lambda_1}}
		&=\mu_{\lambda_2,\lambda_1}
		\circ\left( N_{\lambda_2}^{-\lambda_1} \otimes\id_{Z_{\lambda_1}} \right)\\
		&=\mu_{\lambda_2,\lambda_1}
		\circ\left( \id_{Z_{\lambda_2}}\otimes N_{\lambda_1}^{+\lambda_2} \right)\ .
		\end{aligned}
		\label{eq:commutativity}
	\end{align}

Let $\bar N : Z^r(A) \to Z^r(A)$ be the unique morphism such that $\bar N \circ e_\lambda = e_\lambda \circ N_\lambda$ for all $\lambda$. Combining the fact that $N$ is an automorphism of Frobenius algebras with the definition of $\bar\mu$ and $\bar\eta$ and using \eqref{eq:gradednaka} shows that $\bar N$ is an algebra automorphism. By \eqref{eq:norder} we have ${\bar N}^r=\id$.
We collect the above results in the following proposition.

\begin{proposition}
Let $A$ be as in Definition~\ref{def:gradedcenter}.
The $\Zb_r$-graded center $Z^r(A)$ of $A$ is an associative unital algebra via $\bar\mu$, $\bar\eta$ and is equipped with the algebra automorphism $\bar N$ satisfying ${\bar N}^r=\id$. The algebra $Z^r(A)$ satisfies the commutativity conditions, for $\lambda \in \Zb_r$,
\begin{align}
	\begin{aligned}
	\bar\mu \circ c_{Z^r(A),Z^r(A)} \circ (\id \otimes e_\lambda)
	&=
	\bar\mu \circ ({\bar N}^{-\lambda} \otimes e_\lambda)
	\ ,
	\\
	\bar\mu \circ c_{Z^r(A),Z^r(A)} \circ (e_\lambda \otimes \id)
	&=
	\bar\mu \circ (e_\lambda \otimes {\bar N}^\lambda)
	\ .
	\end{aligned}
\end{align}
\end{proposition}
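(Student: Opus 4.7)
The plan is to reduce every claim to an identity on the graded pieces $Z_\lambda$, exploit \eqref{eq:splitidempot} to convert each such identity into a statement about $A$ dressed with idempotents $P_\lambda$, and then invoke the calculational lemmas of Section~\ref{sec:alg-prelim-statesumTFT}. Since $\bar\mu$ and $\bar\eta$ are defined via the universal property of the (possibly countable) direct sum, it suffices to check the algebra axioms after precomposition with $e_{\lambda_1}\otimes e_{\lambda_2}\otimes e_{\lambda_3}$ etc., and use \eqref{eq:orthproj} to simplify expressions of the form $p_\lambda \circ e_{\lambda'}$.

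First I would establish associativity. Unfolding the definitions, both $\bar\mu\circ(\bar\mu\otimes\id)\circ(e_{\lambda_1}\otimes e_{\lambda_2}\otimes e_{\lambda_3})$ and its mirror differ from $e_{\lambda_1+\lambda_2+\lambda_3} \circ \pi_{\lambda_1+\lambda_2+\lambda_3} \circ \mu \circ (\mu \otimes \id_A) \circ (\iota_{\lambda_1}\otimes\iota_{\lambda_2}\otimes\iota_{\lambda_3})$ only by an insertion of $P_{\lambda_1+\lambda_2} = \iota_{\lambda_1+\lambda_2}\circ\pi_{\lambda_1+\lambda_2}$ (respectively $P_{\lambda_2+\lambda_3}$) between the two multiplications. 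The key identity \eqref{eq:gradedmultunit1}, namely $\mu\circ(P_{\lambda_1}\otimes P_{\lambda_2}) = P_{\lambda_1+\lambda_2}\circ\mu\circ(P_{\lambda_1}\otimes P_{\lambda_2})$, lets me absorb that idempotent, after which associativity of $\mu$ on $A$ closes the argument. Unitality is even easier: the second part of \eqref{eq:gradedmultunit1} gives $\eta = P_0\circ\eta$, so $\pi_0\circ\eta$ is the ``correct'' unit, and $\bar\mu\circ(\bar\eta\otimes\id_{Z^r(A)})\circ e_\lambda$ reduces via unitality of $A$ and $\pi_\lambda\circ\iota_\lambda = \id$ to $e_\lambda$.

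Next I would define $\bar N$ as the unique map with $\bar N\circ e_\lambda = e_\lambda\circ N_{Z_\lambda}$. This is well-defined because \eqref{eq:gradednaka} gives $N\circ P_\lambda = P_\lambda\circ N$, so $N$ restricts along the split idempotent to $N_{Z_\lambda}$. Multiplicativity of $\bar N$ follows because $N$ is an algebra morphism on $A$: applying $\pi_{\lambda_1+\lambda_2}\circ N\circ \mu = \pi_{\lambda_1+\lambda_2}\circ\mu\circ(N\otimes N)$ to $\iota_{\lambda_1}\otimes\iota_{\lambda_2}$ identifies $\bar N\circ\bar\mu\circ(e_{\lambda_1}\otimes e_{\lambda_2})$ with $\bar\mu\circ(\bar N\otimes\bar N)\circ(e_{\lambda_1}\otimes e_{\lambda_2})$; similarly $\bar N\circ\bar\eta = \bar\eta$ using $N\circ\eta = \eta$. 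The relation $\bar N^r = \id$ is inherited directly from $N^r = \id_A$ (or more tightly from \eqref{eq:norder}).

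Finally, the two commutativity identities are immediate from \eqref{eq:commutativity} applied componentwise: for the first one, compose with $e_{\lambda_2}\otimes e_\lambda$ for arbitrary $\lambda_2$, unfold $\bar\mu$, transport $c_{A,A}$ through $\iota_{\lambda_2}\otimes\iota_\lambda$, and use $\iota_\lambda\circ N_{Z_\lambda}^{-\lambda_2} = N^{-\lambda_2}\circ\iota_\lambda$ (which holds by construction of $N_{Z_\lambda}$ together with \eqref{eq:gradednaka}); the second is symmetric. I do not anticipate a genuine obstacle: the only subtle point is ensuring that, for $r=0$, the distributivity of countably infinite direct sums over $\otimes$ (assumed just above Definition~\ref{def:gradedcenter}) is invoked every time I use the universal property to define or check a morphism out of $Z^r(A)\otimes Z^r(A)$ or a three-fold tensor thereof.
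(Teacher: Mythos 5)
Your proposal is correct and follows essentially the same route as the paper, which itself only sketches these verifications in the discussion preceding the proposition: associativity and unitality via the idempotent-absorption identities of Lemma~\ref{lem:gradedfa1} together with \eqref{eq:splitidempot} and the universal property of the direct sum, $\bar N$ via the intertwining $N\circ\iota_\lambda=\iota_\lambda\circ N_{Z_\lambda}$ coming from \eqref{eq:gradednaka}, $\bar N^r=\id$ from \eqref{eq:norder}, and the commutativity relations by restricting \eqref{eq:commutativity} to graded components. No gaps.
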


\begin{corollary} \label{cor:centerproj}
	The component $Z_0$ of $Z^r(A)$ is a subalgebra and is the centre of $A$.
\end{corollary}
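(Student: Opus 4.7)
The plan is to verify two claims separately: (a) $Z_0$ is a subalgebra of $Z^r(A)$ as a $\Zb_r$-graded algebra, and (b) the inclusion $\iota_0 : Z_0 \to A$ realises the categorical centre of $A$, meaning it is the universal morphism $z : Z \to A$ satisfying $\mu \circ (z \otimes \id_A) = \mu \circ c_{A,A} \circ (z \otimes \id_A)$.

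Claim (a) is essentially by construction. The multiplication $\bar\mu$ restricted to $Z_0 \otimes Z_0$ lands in the component of degree $0+0 = 0$, since $\bar\mu \circ (e_0 \otimes e_0) = e_0 \circ \mu_{0,0}$ by how $\bar\mu$ is assembled in \eqref{eq:gradedmult3}; and the unit $\bar\eta$ already lives in $Z_0$ by its definition \eqref{eq:gradedunit2}. Hence $Z_0$ is closed under multiplication and contains the unit.

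For claim (b), both centrality and universality of $\iota_0$ rest on the single identity
\[
\mu \circ (P_0 \otimes \id_A) \;=\; \mu \circ c_{A,A} \circ (P_0 \otimes \id_A) \qquad (\star)
\]
as morphisms $A \otimes A \to A$. Granting $(\star)$, precomposing with $\iota_0 \otimes \id_A$ and using $P_0 \circ \iota_0 = \iota_0$ yields $\mu \circ (\iota_0 \otimes \id_A) = \mu \circ c_{A,A} \circ (\iota_0 \otimes \id_A)$, so $\iota_0$ factors through the centre. For universality, given $z : X \to A$ with $\mu \circ (z \otimes \id_A) = \mu \circ c_{A,A} \circ (z \otimes \id_A)$, one must factor $z$ uniquely through $\iota_0$; uniqueness is immediate from $\pi_0 \circ \iota_0 = \id_{Z_0}$, so the task is to show $P_0 \circ z = z$ and then set the factor to be $\pi_0 \circ z$. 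Combining the centrality hypothesis on $z$, the Frobenius ``bubble'' identity $\mu \circ \Delta = \mu \circ (\id_A \otimes \tau)$ (obtained by rewriting $\id_A = \mu \circ (\id_A \otimes \eta)$, applying \eqref{eq:frob_rel}, associativity, and the definition of $\tau$), and Lemma~\ref{lem:plem} to absorb the Nakayama factor appearing in $P_0$, one computes $P_0 \circ z = \tau^{-1} \cdot \tau \cdot z = z$.

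The main obstacle is the graphical verification of $(\star)$. Expanding $P_0$ via \eqref{eq:pdef}, both sides become diagrams built from $\mu$, $\Delta$, $c_{A,A}$, $N$, and an extra free strand on the right corresponding to the second input; using the third identity of Lemma~\ref{lem:plem} to transport $N$ past the coproduct, together with the Frobenius relations \eqref{eq:frob_rel} and associativity, both reduce to the same canonical form, essentially a ``handle'' threading through the free strand, divided by $\tau$. This is the internal formulation in $\Sc$ of the classical fact that the projector $x \mapsto \tau^{-1}\, N(x_{(2)})\, x_{(1)}$ maps a Frobenius algebra onto its centre, which is what makes the argument go through.
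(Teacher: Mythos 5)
Your proposal is correct, and it supplies considerably more detail than the paper, which states the corollary with no proof at all: in the text it is meant to be read off from the preceding Proposition (whose commutativity conditions at $\lambda=0$ show that $Z_0$ is a commutative subalgebra of $Z^r(A)$) together with the tacit fact that $P_0$ is the central projector of $A$. Your part (a) matches the paper's implicit reasoning exactly. Your part (b) goes further by verifying the universal property of the centre, and the two ingredients you isolate are the right ones: the identity $(\star)$, i.e.\ $\mu \circ (P_0 \otimes \id_A) = \mu \circ c_{A,A} \circ (P_0 \otimes \id_A)$, and the absorption computation $P_0 \circ z = \tau^{-1}\cdot\tau\cdot z = z$ for central $z$, which correctly uses $\Delta \circ z = (z \otimes \id)\circ\Delta\circ\eta$ (up to multiplication), the second identity of Lemma~\ref{lem:plem} to turn $\mu\circ c\circ(\id\otimes N)\circ\Delta\circ\eta$ into $\tau$, and centrality of $\tau$. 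The one place you are brief is the graphical verification of $(\star)$ itself; this is the internal form of the classical statement that $x \mapsto \tau^{-1}\,N(x_{(2)})\,x_{(1)}$ projects onto the centre of a Frobenius algebra with invertible window element, which is precisely the fact the paper inherits from \cite[Lem.\,5.12]{Novak:2015phd} without spelling out either, so your level of detail there is on par with the source. In short: same underlying idea, but your write-up makes explicit what the paper leaves implicit, and the added universality argument is a genuine (and correct) strengthening of the exposition.
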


\subsubsection*{Frobenius algebra structure for $r>0$}

For the rest of this section let us assume that $r>0$.
Since now $\Zb_r$ is finite, we can define the coproduct as the sum
\begin{align}
	\bar{\Delta}&:=\sum_{\lambda_1,\lambda_2\in\Zb_r}
	\left[ Z^r(A)\xrightarrow{p_{\lambda_1+\lambda_2-2}}Z_{\lambda_1+\lambda_2-2}\right.
	\xrightarrow{\Delta_{\lambda_1,\lambda_2}} Z_{\lambda_1}\otimes Z_{\lambda_2}
	\left.\xrightarrow{e_{\lambda_1}\otimes e_{\lambda_2}} Z^r(A)\otimes Z^r(A) \right]\ ,
	\label{eq:gradedcomult2}
\end{align}
with component maps
\begin{align}
	\Delta_{\lambda_1,\lambda_2}&:=
	\left[ Z_{\lambda_1+\lambda_2-2}\xrightarrow{\iota_{\lambda_1+\lambda_2-2}} \right.
		A\xrightarrow{\Delta\circ(\tau\cdot(-))} A\otimes A
	\left.  \xrightarrow{\pi_{\lambda_1}\otimes\pi_{\lambda_2}} Z_{\lambda_2}\otimes Z_{\lambda_1} \right]\ .
	\label{eq:gradedcomult3}
\end{align}
We define the counit
\begin{align}
	\bar{\eps} :=
	\left[ Z^r(A) \xrightarrow{p_{2}}Z_{2}\xrightarrow{\iota_2}A\xrightarrow{(\tau^{-1}\cdot(-))}A\xrightarrow{\varepsilon}\Ib \right]\ .
	\label{eq:gradedcounit2}
\end{align}
The morphisms $\bar{\Delta}$ and $\bar{\varepsilon}$ have degree +2 and -2 respectively.
Note that we inserted a multiplication with $\tau$ and its inverse 
in the definition of $\bar{\eps}$ and $\bar{\Delta}$. 
The reason for this is that we want these maps to match 
the structure maps calculated in Section~\ref{sec:evaluation}
from the state-sum construction.

It is straightforward to see that altogether $Z^r(A)$ becomes a Frobenius algebra, just verify 
\eqref{eq:unit_counit} and 
\eqref{eq:frob_rel}
restricted to individual summands of $Z^r(A)$ by using Lemma~\ref{lem:gradedfa1} to move projectors past structure maps of $A$ and by the properties of $A$ itself.
Altogether we have:

\begin{proposition}
	Let $A$ be as in Definition~\ref{def:gradedcenter}.
For $r>0$, the $\Zb_r$ graded center of $A$ together with
	$\bar{\mu}$, $\bar{\eta}$, $\bar{\Delta}$, $\bar{\varepsilon}$
	is a $\Zb_r$-graded Frobenius algebra.
	The morphisms $\bar{\mu}$ and $\bar{\eta}$ have degree 0, 
	while $\bar{\Delta}$ has degree $2$ and $\bar{\eps}$ has degree $-2$.
\end{proposition}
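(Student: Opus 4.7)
The associativity and unitality of $(Z^r(A),\bar\mu,\bar\eta)$ have already been established in the previous proposition, so the task splits into three parts: (a) verify that $(Z^r(A),\bar\Delta,\bar\varepsilon)$ is a coassociative counital coalgebra; (b) verify the Frobenius compatibility
\be
(\bar\mu\otimes\id)\circ(\id\otimes\bar\Delta)\;=\;\bar\Delta\circ\bar\mu\;=\;(\id\otimes\bar\mu)\circ(\bar\Delta\otimes\id)\ ;
\ee
(c) check the claimed degrees.

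My plan is to reduce every identity to an identity in $A$ by precomposing with the summand inclusions $e_{\lambda_i}$ and postcomposing with the projections $p_{\mu_j}$; by the universal property of the (here finite, because $r>0$) direct sum, an equality of morphisms on $Z^r(A)$ holds iff it holds after this restriction to all graded components. On components, $\bar\mu$, $\bar\eta$, $\bar\Delta$, $\bar\varepsilon$ are built from $\mu,\eta,\Delta,\varepsilon$ on $A$ sandwiched between $\iota_\lambda$, $\pi_\lambda$, together with multiplication by $\tau$ or $\tau^{-1}$. Whenever a composition $\iota_\lambda\circ\pi_\lambda=P_\lambda$ appears inside, I will use Lemma~\ref{lem:gradedfa1} to move $P_\lambda$ past the structure morphisms of $A$, so that it either is absorbed into an adjacent $\pi$ or $\iota$ (via $\pi_\lambda\circ\iota_\lambda=\id_{Z_\lambda}$) or eliminated against $\eta$ (using $\eta=P_0\circ\eta$) or against $\varepsilon$ (using $\varepsilon=\varepsilon\circ P_2$). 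The $\tau$ and $\tau^{-1}$ insertions will cancel the $\tau^{-1}$ built into $P_\lambda$; this is precisely why they were inserted in \eqref{eq:gradedcomult3} and \eqref{eq:gradedcounit2}.

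Concretely, for the counit axiom $(\bar\varepsilon\otimes\id)\circ\bar\Delta\circ e_\lambda$ only the summand with $\lambda_1=2$, $\lambda_2=\lambda$ contributes, and after replacing $\iota_2\circ\pi_2$ by $P_2$, using the second identity of \eqref{eq:gradedcomultunit1} to move $P_2$ across $\Delta$ up to a projector on $Z_\lambda$, and cancelling $\tau\cdot\tau^{-1}$, one is reduced to $(\varepsilon\otimes\id_A)\circ\Delta=\id_A$ for $A$; the projector on the second tensor factor then reassembles into $\iota_\lambda\circ\pi_\lambda\circ\iota_\lambda=\iota_\lambda$. Coassociativity of $\bar\Delta$ is checked by a similar shuffle of projectors past $\Delta$ using \eqref{eq:gradedcomultunit1} and the centrality of $\tau$, reducing to coassociativity of $\Delta$ in $A$. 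Frobenius compatibility is handled the same way: on components $Z_{\lambda_1}\otimes Z_{\lambda_2}$ both sides of $(\bar\mu\otimes\id)\circ(\id\otimes\bar\Delta)=\bar\Delta\circ\bar\mu$ only admit compatible indices (namely the output degrees sum to $\lambda_1+\lambda_2+2$), and after commuting the appropriate $P_\mu$'s through $\mu$ and $\Delta$ via \eqref{eq:gradedmultunit1}--\eqref{eq:gradedcomultunit1} and cancelling $\tau\cdot\tau^{-1}$, the identity reduces to the Frobenius relation for $A$ itself from \eqref{eq:frob_rel}.

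Finally the degrees are immediate from the definitions: $\bar\mu$ sends $Z_{\lambda_1}\otimes Z_{\lambda_2}$ into $Z_{\lambda_1+\lambda_2}$ and $\bar\eta$ lands in $Z_0$, so both have degree $0$; $\bar\Delta$ sends $Z_{\lambda_1+\lambda_2-2}$ to $Z_{\lambda_1}\otimes Z_{\lambda_2}$ (degree $+2$), and $\bar\varepsilon$ is supported on $Z_2$ (degree $-2$). The principal obstacle is purely bookkeeping: carefully tracking the idempotents $P_\mu$ and the scalar insertions $\tau^{\pm1}$ through each of the coassociativity, counit, and Frobenius diagrams so that the $\tau$'s cancel cleanly and each $P_\mu$ either meets its inverse or a projector it can be absorbed into.
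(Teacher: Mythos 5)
Your proposal is correct and follows essentially the same route as the paper, which likewise verifies \eqref{eq:unit_counit} and \eqref{eq:frob_rel} on individual graded summands by using Lemma~\ref{lem:gradedfa1} to move the idempotents $P_\lambda$ past the structure maps of $A$ and then invoking the Frobenius axioms of $A$ itself. Your additional bookkeeping of the $\tau^{\pm1}$ insertions and the degree check is exactly the detail the paper leaves implicit.
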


\begin{remark}\label{rem:fullcenter}
	\begin{enumerate}
		\item 
	The condition $N^r=\id_A$ amounts to $A$ being a representation of
	the group $\Zb_r$. Instead of defining this in a general category,
	let $k$ be a field and let us assume that 
	$A\in\Rep_k(\Zb_r)$, the category of $k$-linear 
	representations of $\Zb_r$.
	Then the algebra $Z^r(A)$ is the full center of $A$ as
	defined in \cite{Davydov:2010fc}, and is in particular
	a commutative algebra in
	 $\Zc(\Rep_k(\Zb_r))$, 
	the monoidal center of $\Rep_k(\Zb_r)$.
	To see this one needs to check that $Z^r(A)$ has the form of 
	the full center as given in
	\cite[Prop.\,9.6]{Davydov:2010fc},
	which has been done in \eqref{eq:commutativity}.

	Note, however, that unless $r=1$ or $r=2$,
	the counit $\bar{\eps}$ and the comultiplication $\bar{\Delta}$ 
	are not degree preserving, 
	i.e.\ $Z^r(A)$ is not a Frobenius algebra in $\Zc(\Rep_k(\Zb_r))$ with these structure maps.

\item
	For $r=0$ one still obtains for every $\lambda_1,\lambda_2\in\Zb$ a non-degeneracy condition, 
	which we do not explain in detail.
	\end{enumerate}
\end{remark}

\subsection{State-sum construction}\label{sec:state-sum-constr}
Let again $r\ge0$ and
$A\in\Sc$ be a Frobenius algebra 
with invertible window element $\tau$ and 
with $N^r=\id_A$.
In this section we define a symmetric monoidal functor $\funZ_A:\Bord{r}\to\Sc$,
that is, a TFT on two-dimensional $r$-spin bordisms. 

Recall the direct sum decomposition $Z^r(A) = \bigoplus_{\lambda \in \Zb_r} Z_\lambda$ of the $\Zb_r$-graded centre from Definition~\ref{def:gradedcenter}.
We define the TFT $\funZ_A$ on objects as follows:
Let $\rho:X\to \Zb_r$ be an $r$-spin object. Then
\begin{align}
	\funZ_A(\rho):=\bigotimes_{x\in X}Z_{\rho_x}\ .
	\label{eq:tft:obj}
\end{align}

\begin{figure}[tb]
	\centering
	\def\svgwidth{6.5cm}
	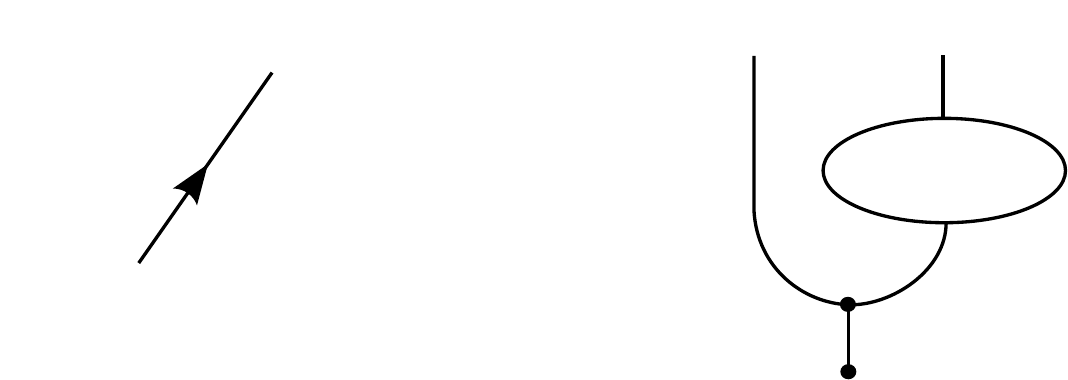
	\caption{$a)$ Left and right sides $(e,l)$ and $(e,r)$ of an inner edge $e$,
		determined by the orientation of $\Sigma$ (paper orientation) and of $e$ (arrow).
		The edge index of $e$ is $s_e$.
		$b)$ Convention for connecting tensor factors belonging to edge sides
		$(e,l)$ and $(e,r)$ of an inner edge $e$
		with the tensor factors belonging to the morphism $g_e$.}
	\label{fig:connect}
\end{figure}

To define $\funZ_A$ on morphisms is more involved and will take up the remainder of this section.
Let $(X,\rho)$ and $(Y,\sigma)$ be two $r$-spin objects.
Let $\Sigma:\rho\to\sigma$ be an $r$-spin bordism
with maps $\lambda:B_{in}\to\Zb_r$, $\mu:B_{out}\to\Zb_r$.
Choose a decorated PLCW decomposition $\Sigma_2$, $\Sigma_1$, $\Sigma_0$
	of the surface $\Sigma$
	with admissible edge index assignment $s$
	such that the $r$-spin structure
	with parametrised boundary 
	$\Sigma(s,\lambda,\mu)$ from Definition~\ref{def:rspinstrindex}
	is isomorphic to the $r$-spin structure of the $r$-spin bordism $\Sigma:\rho\to\sigma$.
Recall that $B_{in}$ and $B_{out}$ denote the 
in- and outgoing boundary components respectively and that
by our conventions they are in bijection with edges on the boundary.

For a face $f\in\Sigma_2$ which is an $n_f$-gon
let us write $(f,k)$, $k=1,\dots,n_f$ for the sides of $f$,
where $(f,1)$ denotes the marked edge of $f$, and the labeling proceeds counter-clockwise 
with respect to the orientation of $f$.
We collect the sides of all faces into a set:
\begin{align}
	S := \setc*{(f,k)}{f \in \Sigma_2 , k=1,\dots,n_f } \ .
\end{align}

We double the set of edges by considering $\Sigma_1 \times \{l,r\}$, where ``$l$'' and ``$r$'' stand for left and right, respectively. Let $E \subset \Sigma_1 \times \{l,r\}$ be the subset of all $(e,l)$ (resp.\ $(e,r)$), which 
have a face attached on the left (resp.\ right) side, cf.\ Figure~\ref{fig:connect}~$a)$. Thus for an inner edge $e\in\Sigma_1$ the set $E$ contains both $(e,l)$ and $(e,r)$,
but for a boundary edge $e'\in\Sigma_1$ the set $E$ contains either
$(e',l)$ or $(e',r)$.
By construction of $S$ and $E$ we obtain a bijection
	\begin{align}
		\Phi:E\xrightarrow{~\sim~}S
		\quad , \quad (e,x) \mapsto (f,k) \ ,
	\end{align}
where $e$ is the $k$'th edge on the boundary of the face $f$ lying on the side $x$ of $e$, counted counter-clockwise from the marked edge of $f$.

		For every vertex $v\in\Sigma_0$ in the interior of $\Sigma$ or
		on an 
		ingoing 
		boundary component of $\Sigma$ choose a side of an edge $(e,x)\in E$ 
		for which $v\in\partial(e)$.
Let 
\begin{align}
V : 	\Sigma_0 \setminus B_{out} \to E
\end{align}
be the resulting function.

To define $\funZ_A(\Sigma)$ we proceed with the following steps.
\begin{enumerate}
	\item Let us introduce the tensor products
		\begin{align}
			\begin{aligned}
			\Ac_{S}&:=\bigotimes_{(f,k) \in S}A^{(f,k)}\ ,&
			\Ac_{E}&:=\bigotimes_{(e,x) \in E}A^{(e,x)}\ ,\\
			\Ac_{in}&:=\bigotimes_{b\in B_{in}}A^{(b,in)}\ ,&
			\Ac_{out}&:=\bigotimes_{c\in B_{out}}A^{(c,out)}\ .
			\end{aligned}
			\label{eq:bigtensor}
		\end{align}
	Every tensor factor is equal to $A$, but
	the various superscripts will help us distinguish tensor factors in the source and target objects of the morphisms we define in the remaining steps.
	\item For an edge $e\in\Sigma_1$
we set
		\begin{align}
			\begin{aligned}
		g_e:=
		\begin{cases}
			A^{(e,in)}	\xrightarrow{N^{-s_e-1}} A^{(e,in)}
			& ; e \in B_{in} \\
			\Ib\xrightarrow{\eta}A\xrightarrow{\Delta}A\otimes A
			\xrightarrow{\id_{A}\otimes N^{s_e+1}}A^{(e,l)} \otimes A^{(e,out)}
			& ; e \in B_{out} \text{, surface is left of $e$ } \\
			\Ib\xrightarrow{\eta}A\xrightarrow{\Delta}A\otimes A
			\xrightarrow{\id_{A}\otimes N^{s_e+1}}A^{(e,out)} \otimes A^{(e,r)}
			& ; e \in B_{out} \text{, surface is right of $e$ } \\
			\Ib\xrightarrow{\eta}A\xrightarrow{\Delta}A\otimes A
			\xrightarrow{\id_{A}\otimes N^{s_e+1}}A^{(e,l)} \otimes A^{(e,r)}
			& ; e \text{ inner edge} \\
		\end{cases}		
			\end{aligned}
			\label{eq:gdef}
		\end{align}
		cf.\ Figure~\ref{fig:connect}.
		Define the linear map
		\begin{align}
			\Cc:=
			\bigotimes_{e\in \Sigma_1} g_{e}:\Ac_{in}\to\Ac_E\otimes\Ac_{out}\ ,
			\label{eq:step4}
		\end{align}
		where it is understood that the tensor factors in 
		$\Ac_E\otimes\Ac_{out}$ are assigned as indicated in \eqref{eq:gdef}.
	\item Note that since all tensor factors in $\mathcal{A}_E$ are algebras, so is $\mathcal{A}_E$ itself.
		For $a : \Ib \to A$ and $(e,x) \in E$ write 
		\begin{align}
			a^{(e,x)} =  \eta \otimes \cdots \otimes a \otimes \cdots \otimes \eta
			: \Ib \to \mathcal{A}_E \ ,
		\end{align}
		where $a$ maps to the tensor factor $A^{(e,x)}$. 
		Define $y : \Ib \to \mathcal{A}_E$ as the following product 
		in the $k$-algebra $\mathcal{S}(\Ib,\mathcal{A}_E)$:
		\begin{align}
			  y = \prod_{v \in \Sigma_0 \setminus B_{out}} (\tau^{-1})^{V(v)} \ .
			\label{eq:step2.1}
		\end{align}
		Finally, we let $\Yc$ be the endomorphism of $\mathcal{A}_E$ obtained by multiplying with $y$,
		\begin{align}
			\Yc :=\left[ \Ac_E \xrightarrow{y \cdot (-)} \Ac_E \right]\ .
			\label{eq:step2}
		\end{align}
	\item Let $\mu^{(1)}:=\id_A$ and let $\mu^{(n)}$ denote the $n$-fold product for $n\ge2$.
		Assign to every face $f\in\Sigma_2$ obtained from an $n_f$-gon 
		the morphism $\eps\circ\mu^{(n_f)} : A_{(f,1)} \otimes \cdots \otimes A_{(f,n_f)} \to \Ib$ and take their tensor product:
		\begin{align}
			\Fc:=\bigotimes_{f\in\Sigma_2}\left(\eps\circ\mu^{(n_f)}\right):\Ac_{S}\to\Ib \ .
			\label{eq:step1}
		\end{align}
	\item 
We will now put the above morphisms together to obtain a morphism 
$\mathcal{L} : \mathcal{A}_{in} \to \mathcal{A}_{out}$. Denote by $\Uppi_\Phi$ the permutation of tensor factors
induced by $\Phi:E\to S$,
		\begin{align}
			\Uppi_\Phi:\Ac_E\to\Ac_S \ .
			\label{eq:step3}
		\end{align}
Using this, we define
		\begin{align}	\Kc&:=\left[\Ac_E\xrightarrow{\Yc}\Ac_E\xrightarrow{\Uppi_\Phi}\Ac_S\xrightarrow{\Fc}\Ib\right]\ ,\\
			\Lc&:=
			\left[ \Ac_{in}\xrightarrow{\Cc} \Ac_E\otimes\Ac_{out} \xrightarrow{\Kc\otimes\id_{\Ac_{out}}} \Ac_{out}  \right]\ .
			\label{eq:step6a}
		\end{align}
	\item Let $\Uppi_{in}$ and $\Uppi_{out}$ 
		denote the permutation of tensor factors induced by 
		the maps $\beta_{in}$ and $\beta_{out}$ respectively:
		\begin{align}
			\Uppi_{in}:&\funZ_A(\rho)=\bigotimes_{x\in X}Z_{\rho_x}\to\bigotimes_{b\in B_{in}}Z_{\lambda_b}\ ,
			\label{eq:betabar-in}\\
			\Uppi_{out}:&\bigotimes_{c\in B_{out}}Z_{\mu_c}\to\bigotimes_{y\in Y}Z_{\sigma_y}=\funZ_A(\sigma)\ .
			\label{eq:betabar-out}
		\end{align}
Using these permutations and the embedding and projection maps 
$\iota_\lambda$, $\pi_\lambda$ from \eqref{eq:splitidempot}
we construct the morphisms linking the action of $\funZ_A$ on objects 
to the tensor products $\mathcal{A}_{in/out}$:
		\begin{align}
			\Ec_{in}:=&\left[\funZ_A(\rho)\xrightarrow{\Uppi_{in}} \bigotimes_{b\in B_{in}}Z_{\lambda_b} \xrightarrow{\bigotimes_{b\in B_{in}}\iota_{\lambda_b}} \Ac_{in}\right]\ ,
			\label{eq:step5in}\\
			\Ec_{out}:=&\left[ \Ac_{out}\xrightarrow{\bigotimes_{c\in B_{out}}\pi_{\mu_c}} \bigotimes_{c\in B_{out}}Z_{\mu_c} \xrightarrow{\Uppi_{out}}\funZ_A(\sigma)\right]\ .
			\label{eq:step5out}
		\end{align}
We have now gathered all ingredients to define the action of $\funZ_A$ on morphisms:		
		\begin{align}				\funZ_A(\Sigma)&:=
			\left[ \funZ_A(\rho)\xrightarrow{\Ec_{in}} \Ac_{in}\xrightarrow{\Lc} \Ac_{out}\xrightarrow{\Ec_{out}} \funZ_A(\sigma)\right]\ .
			\label{eq:step6b}
		\end{align}
\end{enumerate}

\begin{theorem} \label{thm:tft}\leavevmode
Let $A\in\Sc$ be a Frobenius algebra with invertible window element $\tau$ and 
with $N^r=\id_A$.
	\begin{enumerate}
		\item The morphism defined in \eqref{eq:step6b} is 
			independent of the choice of the marked PLCW decomposition and the assignment $V$.
		\item The state-sum construction yields a 
			symmetric monoidal functor $\funZ_A:\Bord{r}\to\Sc$
			whose action on objects and morphisms is given by 
			\eqref{eq:tft:obj} and \eqref{eq:step6b}, respectively. 
	\end{enumerate}
\end{theorem}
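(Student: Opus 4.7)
The plan is to establish the two claims in turn: first that $Z_A(\Sigma)$ is well-defined, independent of all the combinatorial choices involved, and then that the resulting map between morphisms is compatible with composition, identities, the tensor product, and the braiding. Throughout, the key technical ingredients are the idempotent identities for $P_\lambda$ collected in Lemma~\ref{lem:gradedfa1}, the ``moving Nakayama through multiplication'' identities of Lemma~\ref{lem:plem}, the combinatorial moves of Lemma~\ref{lem:moves}, and the elementary PLCW moves of Proposition~\ref{prop:elementarymoves}. Since the combinatorial model here is a PLCW refinement of the triangulation model of \cite{Novak:2015phd}, at each stage one may either run the check directly or compare with the triangulation construction using the refinement procedure of Appendix~\ref{app:novak}.

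For Part (1) I would first fix a PLCW decomposition and verify invariance under the choice of $V$ and under the marking moves of Lemma~\ref{lem:moves}. The independence of $V$ is immediate: the insertion $(\tau^{-1})^{V(v)}$ at a vertex $v$ flows, via the face products $\varepsilon\circ\mu^{(n_f)}$ in $\Fc$, into the product along any face containing $v$; centrality and invertibility of $\tau$ together with the fact that $\tau$ is assembled from $\eta$, $\Delta$ and $\mu$ show that moving the $\tau^{-1}$ to any other edge side incident to $v$ produces the same morphism. For the marking moves, flipping an edge orientation combined with $s_e\mapsto -1-s_e$ is absorbed by the defining identities of $N$ used in the transition piece $g_e$ (this is essentially the statement that $N$ conjugates the left/right convention on inner edges), cyclic movement of an edge marking (operation \ref{lem:moves:2}) rearranges which tensor factor sits at position ``$1$'' of a face and is compensated by a single $N^{\pm 1}$ supplied by the $s_e\mapsto s_e\mp 1$ shift, and a deck transformation (operation \ref{lem:moves:3}) shifts $\sum \hat s_e$ around a face by multiples of $r$ so that $N^r=\id_A$ makes it trivial.

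Next I would treat invariance under the two elementary moves of Figure~\ref{fig:edgemove}. The bivalent-vertex move is handled by the Frobenius algebra axioms: the extra vertex contributes one $\tau^{-1}$, the extra edge contributes $\Delta\circ\eta$ factors on both sides, and the extra $\varepsilon\circ\mu$ on one of the two faces, so the counit and coproduct eliminate through $(\varepsilon\otimes\id)\circ\Delta=\id$, $\mu\circ(\eta\otimes\id)=\id$, and a $\tau\cdot\tau^{-1}$ cancellation, leaving the original morphism. The edge-addition/removal move is the more intricate one, since it merges two adjacent faces and changes the number of vertices by zero but the counts of $D_v$, $N_v$ by an amount that must match the rebalancing of edge indices dictated by \eqref{eq:vertexcond1}; here one uses the Frobenius relation \eqref{eq:frob_rel}, coassociativity, associativity and Lemma~\ref{lem:plem} to rewrite the composite $(\varepsilon\circ\mu^{(n+1)})\otimes(\varepsilon\circ\mu^{(m+1)})$ bridged by $\Delta\circ\eta\circ(\tau^{-1}\cdot(-))$ into $\varepsilon\circ\mu^{(n+m)}$ without spurious Nakayama factors. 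I expect this step to be the main obstacle, because the Nakayama bookkeeping on the shared edge must align exactly with the $s_e$ shift prescribed by the elementary move, and the argument has to be done carefully depending on whether the merged faces coincide.

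For Part (2), once well-definedness is established the remaining checks are essentially algebraic. Monoidality on objects and morphisms is immediate from the definition \eqref{eq:tft:obj}, the disjoint-union nature of the PLCW data, and the fact that $\mathcal{E}_{\mathrm{in/out}}$, $\Cc$, $\Zc$, $\Fc$ are built as tensor products. The symmetric structure is handled by taking mapping cylinders with the canonical $r$-spin structure; the relevant permutation is realised by a single PLCW face and reduces to the symmetry of $\Sc$ applied to the summands $Z_{\lambda_i}$. For identities, the $r$-spin cylinder on a circle labelled $\lambda$ admits a PLCW decomposition for which $\mathcal{L}$ computes exactly $P_\lambda\in\End(A)$ using \eqref{eq:pdef}; sandwiching with $\iota_\lambda$, $\pi_\lambda$ yields $\id_{Z_\lambda}$ by \eqref{eq:splitidempot}. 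Composition is shown by choosing compatible PLCW decompositions on two bordisms $\Sigma:\rho\to\sigma$ and $\Xi:\sigma\to\tau$ and noting that their glueing along the common boundary produces a PLCW decomposition of $\Xi\circ\Sigma$; the ``gluing cylinder'' on each boundary component contributes a factor of $P_{\sigma_y}=\iota_{\sigma_y}\circ\pi_{\sigma_y}$, which by the splitting property is absorbed into the compatible pair $\mathcal{E}_{\mathrm{out}}^\Sigma$ followed by $\mathcal{E}_{\mathrm{in}}^\Xi$, establishing $Z_A(\Xi\circ\Sigma)=Z_A(\Xi)\circ Z_A(\Sigma)$. Independence of the chosen representative PLCW then combines with Part (1) to complete the proof.
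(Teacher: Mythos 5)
Your route is genuinely different from the one taken in the paper. The paper's proof (Appendix~\ref{app:pf:thm:tft}) does not verify invariance move-by-move on PLCW decompositions; instead it compares the PLCW state sum directly with the triangulation-based state sum $T_A(\Sigma)$ of \cite{Novak:2015phd} via the refinement procedure of Appendix~\ref{app:novak} (the $\tau^{-1}$ insertions exactly cancel the ``bubbles'' $\mu\circ\Delta$ created by refinement), and then imports invariance, gluing and functoriality wholesale from \cite[Thm.\,5.10, Prop.\,5.11, Prop.\,5.13]{Novak:2015phd}. Your plan — check invariance under $V$, under the marking moves of Lemma~\ref{lem:moves}, and under the elementary moves of Proposition~\ref{prop:elementarymoves}, then verify functoriality by gluing PLCW decompositions — is a legitimate alternative and would make the PLCW model self-contained on the algebraic side, at the cost of redoing by hand the computations that the triangulation model already packages. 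Note, however, that you still implicitly rely on the triangulation model through Theorem~\ref{thm:rscomb} and Proposition~\ref{prop:elementarymoves}, whose proofs in this paper go through the refinement anyway.

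Two concrete gaps. First, your justification of invariance under deck transformations (operation~\ref{lem:moves:3}) is wrong: a deck transformation shifts the edge indices of a face by an \emph{arbitrary} $k\in\Zb$, not by multiples of $r$, so $N^r=\id_A$ is not what makes it trivial. The correct mechanism is that after normalising orientations one picks up a factor $N^{k}$ on every input of $\varepsilon\circ\mu^{(n_f)}$, and these are absorbed using $\mu\circ(N^k\otimes N^k)=N^k\circ\mu$ and $\varepsilon\circ N=\varepsilon$ (i.e.\ the first identity of Lemma~\ref{lem:plem} and the fact that $N$ is a morphism of Frobenius algebras); alternatively one notes that operation~\ref{lem:moves:3} is generated by operation~\ref{lem:moves:2}, so a correct treatment of the latter suffices. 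Second, the two steps you yourself flag as the crux — the edge-removal move merging two faces (including the degenerate case where the two faces coincide, and the case $v=v'$), and the geometric fact that gluing the combinatorial data along a boundary circle with one vertex and one edge actually represents the glued $r$-spin bordism — are not carried out, and these are precisely where the Nakayama bookkeeping and the admissibility conditions \eqref{eq:vertexcond1}--\eqref{eq:vertexcond2} must be matched in detail. As it stands the proposal is a sound plan with the hardest computations still to be done, rather than a complete proof.
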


The proof of this theorem works by reducing to the corresponding statement 
for triangulations and is given in Appendix~\ref{app:pf:thm:tft}.

\begin{remark}
The above construction yields a TFT on the category of closed $r$-spin bordisms,
where the complete boundary of the $r$-spin bordisms is parametrised,
so the parametrised boundary is a closed manifold. 
One can define a different $r$-spin bordism category,
called the open-closed $r$-spin bordism category, where only a 
one dimensional submanifold of the boundary of $r$-spin surfaces
is parametrised.
The subcategory of the latter generated by the open cup, the open pair of pants and their duals is called the
open $r$-spin bordism category.
In \cite{Stern:2016stft} a TFT on open $r$-spin bordisms was constructed 
using $\Lambda_r$-Frobenius algebras \cite[Prop.\,I.41]{Dyckerhoff:2015csg}
which are Frobenius algebras whose Nakayama automorphism $N$ satisfies $N^r=\id$.
\end{remark}

\subsection{Evaluation of state-sum TFTs on generating $r$-spin bordisms}\label{sec:evaluation}

In this section we apply the state-sum construction from Theorem~\ref{thm:tft} to pairs of pants and discs with $r$-spin structure. On the one hand, these bordisms generate $\Bord{r}$, and on the other hand, we will recover the algebra structure of the $\Zb_r$-graded center $Z^r(A)$ of $A$ in this way.
Finally, we evaluate $\funZ_A$ on a connected bordism of genus $g$ 
with only ingoing boundary components.

\subsubsection*{Pair of pants as multiplication}
Consider the $r$-spin 3-holed sphere
parametrised as in Section~\ref{sec:sigmagb} with 2 ingoing boundary components 
$B_{in}=\left\{ u_1,u_2 \right\}$
and 1 outgoing boundary component 
$B_{out}=\left\{ u_3 \right\}$
between $r$-spin objects
$\rho:\left\{ x_1,x_2 \right\}\to\Zb_r$
and $\sigma:\left\{ y \right\}\to\Zb_r$
with $\beta_{in}(x_i)=u_i$ ($i=1,2$) and $\beta_{out}(y)=u_3$.
Let $\lambda_1:=\lambda_{u_1}$, $\lambda_2:=\lambda_{u_2}$ and $\lambda_3:=\mu_{u_3}$.
Then by \eqref{eq:bdrysign} $R_{u_1}=\lambda_1-1$, $R_{u_2}=\lambda_2-1$ and $R_{u_3}=1-\lambda_3$.
Substituting these and $\chi(\Sigma_{0,3})=2-0-3=-1$ in \eqref{eq:prop:sigmagb} gives
\begin{align}
	\lambda_1+\lambda_2&\equiv \lambda_3&&\Mod{r}\ .\label{eq:vcond:mult}
\end{align}
Denote this $r$-spin bordism by 
\begin{align*}
S_{1,2}(u_1,u_2,u_3,\lambda_1,\lambda_2):= 
\Sigma_{0,3}(u_1,u_2,u_3,\lambda_1-1,\lambda_2-1,1-\lambda_3):\rho\to\sigma\ , 
\end{align*}
(cf.\ \eqref{eq:sigmagb-def}).
The sets $S$, $E$ are (see Figure~\ref{fig:sigmagb_param})
\begin{align}
	S=&\setc*{(f,k)}{k=1,\dots,9}\simeq\left\{ 1,\dots,9 \right\}\ ,
	\label{eq:pants-sides1}\\
	E=&\left\{ (u_1,r),(u_2,r),(r_1,l),(r_1,r),(r_2,l),(r_2,r),(r_3,l),(r_3,r),(u_3,r) \right\}\nonumber\\
	\simeq&\left\{ 1,\dots,9 \right\}\ ,
	\label{eq:pants-sides2}
\end{align}
where in \eqref{eq:pants-sides2} the isomorphism is given by the order of elements of $E$ as listed.
We have one inner vertex $v_0$ and 3 boundary vertices $v_1$, $v_2$ and $v_3$,
with $v_3$ placed on the outgoing boundary component.
We set
\begin{align}
	 V(v_0):&=(r_2,r)\ ,&V(v_1):&=(u_1,r)\ ,&V(v_2):&=(u_2,r)\ .
	\label{eq:pants-sides3}
\end{align}
Following the steps of the state-sum construction we get:
\begin{enumerate}
	\setcounter{enumi}{1}
	\item For the various edge indices
		\begin{align*}
			\Cc=N^{-u_1-1}\otimes N^{-u_2-1}\otimes g_{r_1}\otimes g_{r_2}
			\otimes g_{r_3}\otimes g_{u_3}
		\end{align*}
		from \eqref{eq:step4}.
		Recall from Notation~\ref{not:edgeindex} that the same symbols denote 
		edges and the assigned edge indices.
	\item For the inner vertex and the ingoing vertices we set
		\begin{align*}
			y=(\tau^{-1})^{\otimes2}\otimes \eta^{\otimes3}\otimes		
			(\tau^{-1})\otimes \eta^{\otimes 3}
		\end{align*}
		from \eqref{eq:step2.1} according to the map $V$ in \eqref{eq:pants-sides3}.
	\item For the single 9-gon $\Fc=\eps\circ\mu^{(9)}$ from \eqref{eq:step1}.
	\item The permutation is $\Uppi_{\Phi}=(12543)(89)$ from \eqref{eq:step3}
		where we use the cycle notation for the permutation of tensor factors.
		After a calculation using associativity of the product and
		the last equation of \eqref{eq:plem},
		the morphism $\Lc$ in \eqref{eq:step6a} is
		\begin{align}
			\left[ A\otimes A\xrightarrow{P_{\lambda_1}\circ N^{-u_1-1}\otimes P_{\lambda_2}\circ N^{-u_2-1}} A\otimes A\xrightarrow{\mu}A \xrightarrow{P_{\lambda_1+\lambda_2}\circ N^{u_3+1}}A\right]\ .
			\label{eq:pants:state-sum}
		\end{align}

	\item For the in- and outgoing boundary components we get
		$\Ec_{in}=\iota_{\lambda_1}\otimes\iota_{\lambda_2}$ and
		$\Ec_{out}=\pi_{\lambda_1+\lambda_2}$ from \eqref{eq:step5in} and \eqref{eq:step5out},
		since the permutations induced by $\beta_{in}$ and $\beta_{out}$ 
		from \eqref{eq:betabar-in} and \eqref{eq:betabar-out} are identities.
		Also note that $\rho_{x_1}=\lambda_1$, etc.
		Finally by composing $\Lc$ with $\Ec_{in}$ and $\Ec_{out}$ as in \eqref{eq:step6b} we obtain
		\begin{align}
			\begin{aligned}
			&\funZ_A\left( S_{1,2}(u_1,u_2,u_3,\lambda_1,\lambda_2) \right)
			\\
			=&\left[ Z_{\lambda_1}\otimes Z_{\lambda_2}\xrightarrow{N_{\lambda_1}^{-u_1}\otimes N_{\lambda_2}^{-u_2}} Z_{\lambda_1}\otimes Z_{\lambda_2}\xrightarrow{\mu_{\lambda_1,\lambda_2}}Z_{\lambda_1+\lambda_2} \xrightarrow{N_{\lambda_3}^{u_3}}Z_{\lambda_1+\lambda_2}\right]\ .
			\end{aligned}
			\label{eq:gradedmultcomp}
		\end{align}
\end{enumerate}

Observe that $\funZ_A\left( S_{1,2}(0,0,0,\lambda_1,\lambda_2)\right)=\mu_{\lambda_1,\lambda_2}$
from \eqref{eq:gradedmult3}.

\subsubsection*{Cup as unit}
Consider a disk with outgoing boundary.
By Corollary~\ref{cor:disk}, we get a unique $r$-spin structure for
boundary parametrisation, namely $\mu=0$. Note that the map $\beta_{out}$ is unique.
Using the notation in \eqref{eq:sigmagb-def} we write $S_{1,0}:= \Sigma_{0,1}(u,0):\emptyset\to\rho$. 
with $\rho:\left\{ * \right\}\to\Zb_r$ $\rho_*=0$.
However, since the $r$-spin structure is actually independent of $u$ we may as well set $u=0$.
We have
\begin{align}
	S=&\setc*{(f,k)}{k=1,2,3}\simeq\left\{ 1,2,3 \right\}\ ,
	\label{eq:cup-sides1}\\
	E=&\left\{ (r_1,l),(r_1,r),(u_1,r) \right\} \simeq\left\{ 1,2,3 \right\}\ .
	\label{eq:cup-sides2}
\end{align}
There is an inner vertex $v_0$ and an outgoing boundary vertex $v_1$, and we set
\begin{align}
	V(v_0):=(u_1,r)\ .
	\label{eq:cup-sides3}
\end{align}
By the state-sum construction one has
\begin{enumerate}
	\setcounter{enumi}{1}
	\item For the 2 edges $\Cc=g_{r_1}\otimes g_{u_1}$
		from \eqref{eq:step4}.
	\item For the inner vertex 
		$y=\eta^{\otimes2}\otimes\tau^{-1}$ from \eqref{eq:step2.1}.
	\item For the single 3-gon $\Fc=\eps\circ\mu^{(3)}$ from \eqref{eq:step1}.
	\item The permutation is $\Uppi_{\Phi}=(23)$ from \eqref{eq:step3}.
		Putting the above together according to \eqref{eq:step6a} we get
		\begin{align}
			\Lc=P_0\circ\eta\ .
			\label{eq:gradedunit:state-sum}
		\end{align}
	\item For the (empty) in- and outgoing boundary components we get
		$\Ec_{in}=\id_{\Ib}$ and
		$\Ec_{out}=\pi_{0}$ from \eqref{eq:step5in} and \eqref{eq:step5out}.
		{}From \eqref{eq:step6b} we finally get
		\begin{align}
			\funZ_A(S_{1,0})=\left[\Ib\xrightarrow{\eta}A\xrightarrow{\pi_0}Z_0\right]\ .
			\label{eq:gradedunit}
		\end{align}
\end{enumerate}
Observe that $\left[\Ib\xrightarrow{\funZ_A( S_{1,0})}Z_0\xrightarrow{e_0}\oplus_{\lambda\in\Zb_r}Z_{\lambda}\right]
=\bar{\eta}$ from \eqref{eq:gradedunit2}.

\subsubsection*{Pair of pants as comultiplication}
Consider a 3-holed sphere with the parametrisation as above, 
just with in- and outgoing boundary components exchanged,
i.e.\ $\lambda_1$, $\lambda_2$ 
stand for outgoing boundary components, $\lambda_3$ for the ingoing etc.
Then from \eqref{eq:prop:sigmagb} one has:
\begin{align}
	\lambda_1+\lambda_2-2&\equiv \lambda_3&&\Mod{r}\ .
	\label{eq:copants:bdrycond}
\end{align}
Denote this $r$-spin surface with parametrised boundary by
\begin{align*}
S_{2,1}(u_1,u_2,u_3,\lambda_1,\lambda_2):= 
	\Sigma_{0,3}(u_1,u_2,u_3,1-\lambda_1,1-\lambda_2,\lambda_3-1):\sigma\to\rho\ , 
\end{align*}
(cf.\ \eqref{eq:sigmagb-def}).
The morphism $\Lc$ in \eqref{eq:step6a} assigned to it by the state-sum construction is
\begin{align}
	\left[ A\xrightarrow{P_{\lambda_1+\lambda_2-2}\circ N^{-u_3-1}} A\xrightarrow{\Delta\circ(\tau\cdot(-))}A\otimes A \xrightarrow{P_{\lambda_1}\circ N^{u_1+1}\otimes P_{\lambda_2}\circ N^{u_2+1}}A\otimes A\right]\ .
	\label{eq:copants:state-sum1}
\end{align}
and from \eqref{eq:step6b} one obtains
\begin{align}
	&\funZ_A\left( S_{2,1}(u_1,u_2,u_3,\lambda_1,\lambda_2) \right)= \nonumber\\
	&\left[ Z_{\lambda_1+\lambda_2-2}\xrightarrow{N_{\lambda_1+\lambda_2-2}^{-u_3}} Z_{\lambda_1+\lambda_2-2}\xrightarrow{\Delta_{\lambda_1,\lambda_2}}Z_{\lambda_1} \otimes Z_{\lambda_2}\xrightarrow{N_{\lambda_1}^{u_1}\otimes N_{\lambda_2}^{u_2}}Z_{\lambda_1}\otimes Z_{\lambda_2}\right]\ .
	\label{eq:gradedcomultcomp}
\end{align}
Observe that $\funZ_A\left( S_{2,1}(0,0,0,\lambda_1,\lambda_2)\right)=\Delta_{\lambda_1,\lambda_2}$
from \eqref{eq:gradedcomult3}.
While the above morphism is defined also for $r=0$, 
as was remarked in Section~\ref{sec:gradedcenter}
one can sum these morphisms only in the case when $r\ne0$,
in which case one obtains \eqref{eq:gradedcomult2}.

\subsubsection*{Cap as counit}
Consider an $r$-spin disk with ingoing boundary.
By Corollary~\ref{cor:disk}, the boundary parametrisation 
has $\lambda=2$ and the $r$-spin structure is independent of the edge indices.
Denote this $r$-spin surface with parametrised boundary 
with $S_{0,1}:= \Sigma_{0,1}(0,2):\sigma\to\emptyset$, (cf.\ \eqref{eq:sigmagb-def}),
with $\sigma:\left\{ * \right\}\to\Zb_r$ $\sigma_*=2$.
By the state-sum construction one has
\begin{align}
	\funZ_A(S_{0,1})=
	\left[Z_2\xrightarrow{\iota_2}A\xrightarrow{(\tau^{-1}\cdot(-))}A\xrightarrow{\eps}\Ib\right]\ .
	\label{eq:gradedcounit}
\end{align}
Observe that $\left[\oplus_{\lambda\in\Zb_r}Z_{\lambda}\xrightarrow{p_2}Z_2\xrightarrow{\funZ_A( S_{1,0})}\Ib\right]
=\bar{\eps}$ from \eqref{eq:gradedcounit2}. 

\medskip

We collect the above computations for $\funZ_A$ evaluated on generators in the following proposition:

\begin{proposition}\label{prop:state-space-graded-center}
Let $A\in\Sc$ be a Frobenius algebra with invertible window element $\tau$ and 
with $N^r=\id_A$, and let $\funZ_A$ be the $r$-spin TFT defined in Theorem~\ref{thm:tft}.
The the $\Zb_r$-graded center $Z^r(A)$ is equal to $\bigoplus_{\lambda \in \Zb_r} \funZ_A(\lambda)$ with product and unit (restricted to the corresponding graded components) given by $\funZ_A\left( S_{1,2}(0,0,0,\lambda_1,\lambda_2)\right)$ and $\funZ_A(S_{1,0})$, respectively. 
For $r>0$, we obtain an equality of Frobenius algebras.
\end{proposition}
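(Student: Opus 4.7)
The plan is to observe that Proposition~\ref{prop:state-space-graded-center} is essentially a bookkeeping statement: all of the required comparisons have already been carried out in the bullet-by-bullet evaluation earlier in Section~\ref{sec:evaluation}, so my proof will just collect them and match them with the definitions in Section~\ref{sec:gradedcenter}. First, the equality of underlying graded objects is immediate: applying \eqref{eq:tft:obj} to the one-element $r$-spin object $\rho:\{*\}\to\Zb_r$, $*\mapsto\lambda$ (which I will abuse notation and call $\lambda$) gives $Z_A(\lambda)=Z_\lambda$, so $\bigoplus_{\lambda\in\Zb_r}Z_A(\lambda)=Z^r(A)$ as graded objects in $\Sc$.

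Next, for the product, I specialise \eqref{eq:gradedmultcomp} to $u_1=u_2=u_3=0$. Since $N^0=\id$ on all $Z_{\lambda_i}$, every Nakayama factor collapses and one is left precisely with $\mu_{\lambda_1,\lambda_2}$ of \eqref{eq:gradedmult3}. Summing these component morphisms, extended by the embeddings $e_{\lambda_1+\lambda_2}$ and the projections $p_{\lambda_1},p_{\lambda_2}$ coming from $\Ec_{in/out}$, reproduces $\bar{\mu}$ by the universal property used to define $\bar\mu$ after \eqref{eq:gradedmult3}. For the unit, \eqref{eq:gradedunit} gives $Z_A(S_{1,0})=\pi_0\circ\eta$, and postcomposing with $e_0$ yields $\bar\eta$ as in \eqref{eq:gradedunit2}. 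This establishes the algebra part of the claim for all $r\ge 0$.

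For $r>0$, I additionally need to match the coproduct and counit. Setting $u_1=u_2=u_3=0$ in \eqref{eq:gradedcomultcomp} likewise collapses all Nakayama factors and recovers $\Delta_{\lambda_1,\lambda_2}$ from \eqref{eq:gradedcomult3}. The finite sum over $\lambda_1,\lambda_2\in\Zb_r$ (appropriately pre- and post-composed with the embeddings and projections) is then exactly $\bar\Delta$ as defined in \eqref{eq:gradedcomult2}. For the counit, \eqref{eq:gradedcounit} reads $\varepsilon\circ(\tau^{-1}\cdot(-))\circ\iota_2$, and precomposing with $p_2$ recovers $\bar\varepsilon$ of \eqref{eq:gradedcounit2}. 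Thus the Frobenius structure on $Z^r(A)$ coincides with the one induced by $Z_A$ on the generating bordisms.

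There is really no main obstacle to overcome, because the two potential subtleties have already been dealt with in the construction. First, the $\tau^{\pm 1}$ insertions in \eqref{eq:gradedcomult3} and \eqref{eq:gradedcounit2} were put there precisely so that the abstract definition matches the output of the state-sum construction (which always introduces a $\tau^{-1}$ per vertex assigned by $V$). Second, the degree-shift conventions $\bar\Delta$ of degree $+2$ and $\bar\varepsilon$ of degree $-2$ agree with the boundary conditions \eqref{eq:vcond:mult} and \eqref{eq:copants:bdrycond} forced on the three-holed sphere by \eqref{eq:prop:sigmagb}. The only care needed is to keep track that $Z_A$ on morphisms bakes in the embeddings $\iota_\lambda$ and projections $\pi_\lambda$ through $\Ec_{in},\Ec_{out}$, so the match with $\bar\mu,\bar\eta,\bar\Delta,\bar\varepsilon$ is as morphisms on $Z^r(A)$ rather than on $A$ itself; this is straightforward to read off from \eqref{eq:step5in}--\eqref{eq:step6b}.
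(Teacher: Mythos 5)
Your proposal is correct and follows exactly the paper's own route: the paper gives no separate proof but simply collects the bullet-by-bullet evaluations of $Z_A$ on $S_{1,2}$, $S_{1,0}$, $S_{2,1}$ and $S_{0,1}$ from Section~\ref{sec:evaluation}, specialised to vanishing edge indices, and matches them against \eqref{eq:gradedmult3}, \eqref{eq:gradedunit2}, \eqref{eq:gradedcomult3} and \eqref{eq:gradedcounit2}, just as you do. Your remarks on the role of the $\tau^{\pm1}$ insertions and of $\Ec_{in/out}$ are exactly the points the paper flags after \eqref{eq:gradedcounit2} and in step~6 of the state-sum construction.
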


For $r=2$, the above relation between state spaces and the $\Zb_r$-graded center was already observed in \cite{Moore:2006db}.

\subsubsection*{Connected $r$-spin bordisms}

Finally, let us evaluate $\funZ_A$ on a general connected $r$-spin bordism with only ingoing boundary components, that is, on 
$\Sigma_{g,b}(s_i,t_i,u_j,\lambda_j-1)$ in the notation of \eqref{eq:sigmagb-def}.
Write 
\begin{align}
	\begin{aligned}
		\def\svgwidth{6cm}
\begingroup%
  \makeatletter%
  \providecommand\color[2][]{%
    \errmessage{(Inkscape) Color is used for the text in Inkscape, but the package 'color.sty' is not loaded}%
    \renewcommand\color[2][]{}%
  }%
  \providecommand\transparent[1]{%
    \errmessage{(Inkscape) Transparency is used (non-zero) for the text in Inkscape, but the package 'transparent.sty' is not loaded}%
    \renewcommand\transparent[1]{}%
  }%
  \providecommand\rotatebox[2]{#2}%
  \ifx\svgwidth\undefined%
    \setlength{\unitlength}{175.87275391bp}%
    \ifx\svgscale\undefined%
      \relax%
    \else%
      \setlength{\unitlength}{\unitlength * \real{\svgscale}}%
    \fi%
  \else%
    \setlength{\unitlength}{\svgwidth}%
  \fi%
  \global\let\svgwidth\undefined%
  \global\let\svgscale\undefined%
  \makeatother%
  \begin{picture}(1,0.80001022)%
    \put(0,0){\includegraphics[width=\unitlength]{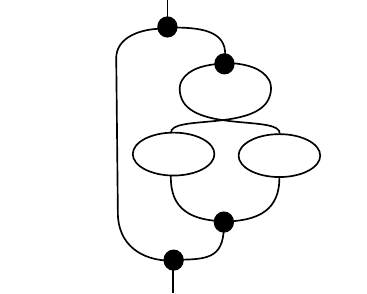}}%
    \put(0.69406592,0.35529227){\color[rgb]{0,0,0}\makebox(0,0)[lb]{\smash{$t+1$}}}%
    \put(0.39627518,0.36321829){\color[rgb]{0,0,0}\makebox(0,0)[lb]{\smash{$s+1$}}}%
    \put(-0.00830679,0.36321829){\color[rgb]{0,0,0}\makebox(0,0)[lb]{\smash{$\varphi(s,t):=$}}}%
  \end{picture}%
\endgroup%

		\label{eq:tftphi}
	\end{aligned}
\end{align}
Using the decomposition of $\Sigma_{g,b}$ from Figure~\ref{fig:sigmagb_param}~$a)$, a straightforward computation along the same lines as above gives the following proposition.

\begin{proposition}\label{prop:tftsgb}
Let $\Sigma_{g,b}(s_i,t_i,u_j,\lambda_j-1)$ denote the $r$-spin surface 
	of Definition~\ref{def:rspinstrindex} with only ingoing boundary components. Then
	\begin{align}
		\funZ_A(\Sigma_{g,b}(s_i,t_i,u_j,\lambda_j-1))=\eps\circ(\tau^{-1}\cdot(-))\circ\prod_{i=1}^g\varphi(s_i,t_i)\circ\mu^{(b)}
		\circ\bigotimes_{j=1}^b (N^{-u_j-1}\circ\iota_{\lambda_j}).
		\label{eq:tftsgb}
	\end{align}
\end{proposition}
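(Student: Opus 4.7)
The plan is to apply the state-sum recipe of Theorem~\ref{thm:tft} directly to the PLCW decomposition of $\Sigma_{g,b}$ depicted in Figure~\ref{fig:sigmagb_param}, consisting of a single $(4g+3b)$-gon, the $2g+2b$ inner edges $s_1,\dots,s_g,t_1,\dots,t_g,r_1,\dots,r_b$, the $b$ boundary edges $u_1,\dots,u_b$, one inner vertex $v_0$ and $b$ ingoing boundary vertices $v_1,\dots,v_b$. First I would fix orientations and the marked edge $s_1$ (or $r_1$ if $g=0$) as in the picture, read off the bijection $\Phi\colon E\to S$ given by the cyclic order of sides of the polygon, and choose the function $V$ on vertices so that the $b$ factors $\tau^{-1}$ coming from the ingoing boundary vertices $v_j$ each sit on the side $(u_j,r)$ while the single factor $\tau^{-1}$ from $v_0$ is placed on one side of, say, $t_g$ (or any inner edge touching $v_0$).

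Next I would assemble the morphisms of the state-sum construction step by step. For each boundary edge $u_j$ the morphism $g_{u_j}$ is simply $N^{-u_j-1}$; for each inner edge $e$ with index $s_e$ the morphism $g_e$ produces $(\id\otimes N^{s_e+1})\circ\Delta\circ\eta$ glued to the two sides of $e$. Pulling back everything along $\Uppi_\Phi$ and composing with $\Fc=\varepsilon\circ\mu^{(4g+3b)}$ rewrites $\Lc$ as a single long product around the boundary of the face, into which the various coproducts and $N$-twists plug the edge--side data. The $b+1$ factors of $\tau^{-1}$ appearing from $\Zc$ are accounted for as follows: the $b$ factors sitting on the sides $(u_j,r)$ are absorbed into the definition of the projectors $P_{\lambda_j}$ after composing with $\Ec_\text{in}=\bigotimes_j\iota_{\lambda_j}$ (using that $\iota_\lambda\circ\pi_\lambda=P_\lambda$ contains a $\tau^{-1}$), leaving exactly one explicit $\tau^{-1}$ from $v_0$.

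The remaining task is purely algebraic simplification. At $v_0$, each pair of sides belonging to a handle edge $s_i$ (resp.\ $t_i$) meets inside the cyclic product, and the local configuration $\mu\circ(\id\otimes N^{s_i+1})\circ\Delta\circ\eta$ followed later by $\mu\circ(\id\otimes N^{t_i+1})\circ\Delta\circ\eta$ with an interleaving dictated by the word $s_i t_i s_i^{-1} t_i^{-1}$ of the polygon assembles into precisely the graphical element $\varphi(s_i,t_i)$ of \eqref{eq:tftphi}. For the boundary edges $r_j$, the two ends attach to $v_0$ and $v_j$; one side goes into the $b$--fold product ``around $v_0$'' while the other, after incorporating the $N^{-u_j-1}$ from $g_{u_j}$ and the factor of $\tau^{-1}$ absorbed into $P_{\lambda_j}$, contributes $N^{-u_j-1}\circ\iota_{\lambda_j}$ to the $j$-th input slot of $\mu^{(b)}$. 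Running the $\varphi(s_i,t_i)$'s from the handles in sequence before the $b$-fold multiplication and capping off with $\varepsilon\circ(\tau^{-1}\cdot(-))$ for the remaining $v_0$-factor gives the claimed formula \eqref{eq:tftsgb}.

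The routine checks are the commuting of $N$'s past products and coproducts via the first two identities of Lemma~\ref{lem:plem}, and the use of the Frobenius relation to rearrange the cyclic $\mu^{(4g+3b)}$ into the decomposition ``handles, then product of boundary legs''. The main bookkeeping obstacle is matching the cyclic order of sides around the single face with the geometric pairing of edges so that the $aba^{-1}b^{-1}$ pattern of each handle yields exactly one copy of $\varphi(s_i,t_i)$; once this is verified for one handle, iteration over $i$ and the independence of distinct handles (each reduces to an endomorphism of $A$ that commutes with the rest by centrality of the corresponding bubble) completes the computation.
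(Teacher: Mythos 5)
Your proposal is correct and follows essentially the same route as the paper, which proves Proposition~\ref{prop:tftsgb} by exactly this ``straightforward computation along the same lines'' as the generator evaluations in Section~\ref{sec:evaluation}, applied to the single-face PLCW decomposition of Figure~\ref{fig:sigmagb_param}. Your accounting of the $\tau^{-1}$ factors (the boundary-vertex ones being absorbed into $P_{\lambda_j}\circ\iota_{\lambda_j}=\iota_{\lambda_j}$, the one from $v_0$ surviving as $\eps\circ(\tau^{-1}\cdot(-))$) and of the $a_ib_ia_i^{-1}b_i^{-1}$ pattern producing $\varphi(s_i,t_i)$ matches the paper's intended argument.
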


\section{\texorpdfstring{$r$}{r}-spin TFT computing the Arf-invariant}\label{sec:arf}

In this section we give an example for the state-sum construction of $r$-spin TFTs, namely for the two-dimensional Clifford algebra in 
super vector spaces, 
and we compute its value on connected $r$-spin bordisms (Section~\ref{sec:arftft}). We then recall the definition of the Arf invariant for $r$-spin surfaces and observe that the TFT obtained from the Clifford algebra computes this invariant (Section~\ref{sec:arfinv}).

\subsection{$r$-spin TFT from a Clifford algebra}\label{sec:arftft}

Let $r\in\Zb_{\ge 0}$ be even and let $k$ be a field not of characteristic $2$.
Let $\Cl \in\SVect$ be the Clifford algebra with one odd generator $\theta$, 
i.e.\ $\Cl = k \oplus k\theta$ with $\theta^2 = 1$. We turn $\Cl$ into a Frobenius algebra via
\begin{align}
	\eps(1)&=2 \ ,&
	\eps(\theta)&=0\ ,&
		\Delta(1) &= \frac{1}{2}(1 \otimes 1 + \theta \otimes \theta) 
		\ ,&
	\Delta(\theta) &= \frac{1}{2}(\theta \otimes 1 + 1 \otimes \theta) \ .
	\label{eq:cl-fa-structure}
\end{align}

\begin{lemma} \label{lem:arffa}
	For the Frobenius algebra $\Cl$ the following hold.
	\begin{enumerate}
		\item $\tau=\mu\circ\Delta\circ\eta=\eta$, hence $\Cl$ has invertible window element.
		\item The Nakayama automorphism is given by $N(\theta^m)=(-1)^{m}\theta^m$.
		\item For $\lambda\in\Zb_r$, $P_{\lambda}(\theta^m)=\frac{1}{2}\left[1+(-1)^{\lambda-m}\right]\theta^m$,
			hence $Z_{\lambda}=k\theta^{\lambda}$.
		\item 
The morphism $\varphi_{s,t}$ from \eqref{eq:tftphi} is given by		
		$\varphi_{s,t}=\frac{1}{2}(-1)^{(s+1)(t+1)}\id_{\Cl}$.
	\end{enumerate}
\end{lemma}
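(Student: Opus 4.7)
The approach is a direct computation on the two-element basis $\{1,\theta\}$ of $\Cl$, with the only subtleties being the Koszul signs from the symmetric braiding in $\SVect$, namely $c_{\Cl,\Cl}(\theta^m\otimes\theta^n)=(-1)^{mn}\theta^n\otimes\theta^m$. Parts (1)--(4) are essentially independent sign computations, and each reduces to checking on the (at most two) parities of the argument.

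For part (1), I would just expand: $\tau = \mu\circ\Delta\circ\eta(1) = \mu\bigl(\tfrac12(1\otimes1+\theta\otimes\theta)\bigr)=\tfrac12(1+\theta^2)=1=\eta(1)$, so $\tau=\eta$ and is invertible with $\tau^{-1}=\eta$. For part (2), I would unwind the defining composition of $N$ from Figure~\ref{fig:window_nakayama} applied to $\theta^m$. Since all signs are produced by the single braiding in that diagram, one gets an overall factor $(-1)^m$ once the odd generator $\theta$ passes through $\theta^m$, yielding $N(\theta^m)=(-1)^m\theta^m$. The formula then extends to all $m\in\Zb$ using $\theta^2=1$, and in particular confirms $N^r=\id$ since $r$ is even.

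For part (3), I would substitute (1) and (2) into \eqref{eq:pdef}. Because $\tau^{-1}\cdot(-)=\id$, only $\mu\circ c_{A,A}\circ(\id\otimes N^{1-\lambda})\circ\Delta$ needs evaluating on $\theta^m$. Writing $\Delta(\theta^m)=\tfrac12\sum_{i+j\equiv m}\theta^i\otimes\theta^j$ (indices mod $2$), applying $N^{1-\lambda}$ on the right slot to pick up $(-1)^{(1-\lambda)j}$, applying the braiding to pick up $(-1)^{ij}$, and using $\theta^j\theta^i=\theta^m$, I get
\[
P_\lambda(\theta^m) \;=\; \tfrac12\!\!\sum_{i+j\equiv m}\!(-1)^{(1-\lambda)j+ij}\,\theta^m.
\]
A two-line case check ($m=0,1$) collapses this to $\tfrac12(1+(-1)^{\lambda-m})\theta^m$. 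This vanishes unless $\lambda\equiv m\pmod 2$, so the image of $P_\lambda$ is spanned by $\theta^{\lambda\bmod 2}$, i.e.\ $Z_\lambda=k\theta^\lambda$ with the convention of Section~\ref{sec:rspinbord}.

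For part (4), I would apply the string diagram \eqref{eq:tftphi} to $1\in\Cl$, using part (2) to turn each $N^{s+1}$ and $N^{t+1}$ into a sign depending on the parity of the odd generator passing through it, and tracking the braiding sign as in part (3). The factor $\tfrac12$ comes from $\Delta(1)=\tfrac12(1\otimes1+\theta\otimes\theta)$, while the two Nakayama insertions together contribute $(-1)^{(s+1)\cdot a}(-1)^{(t+1)\cdot b}$ on the $\theta^a\otimes\theta^b$ summand, and these combine with the braiding sign to $(-1)^{(s+1)(t+1)}$ on the $(a,b)=(1,1)$ summand while the $(0,0)$ summand contributes the identity; the two agree up to the sign $(-1)^{(s+1)(t+1)}$, giving the stated scalar. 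The computation on $\theta$ is parallel, confirming $\varphi(s,t)=\tfrac12(-1)^{(s+1)(t+1)}\id_\Cl$. The only real bookkeeping hazard is in part (4): a misplaced Koszul sign or miscounted Nakayama power would give the wrong parity dependence, so I would double-check by evaluating in the four parity cases of $(s,t)$ and matching against $(-1)^{(s+1)(t+1)}$.
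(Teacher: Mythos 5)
Your proposal is correct and follows essentially the same route as the paper: a direct evaluation of each structure map on the basis $\{1,\theta\}$, tracking the Koszul signs from the braiding in $\SVect$ step by step through the defining compositions of $\tau$, $N$, $P_\lambda$ and $\varphi(s,t)$. The sign bookkeeping in your parts (3) and (4) matches the paper's explicit computations.
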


\begin{proof}
	\begin{enumerate}
		\item $\tau(1)=\mu\circ\Delta\circ\eta(1)=
		  \frac{1}{2}\, \mu\left( 1 \otimes 1 + \theta \otimes \theta \right)
		  =\eta(1)$. 
			Its inverse is $\eta$.
		\item 
				$N(1)=1$ in any Frobenius algebra. 
			We calculate $N(\theta)$ in steps:
			\begin{align*}
				\theta\mapsto
				\theta\otimes (1\otimes 1+\theta\otimes\theta)/2
				\mapsto (1\otimes \theta\otimes 1-\theta\otimes \theta\otimes \theta)/2
				\mapsto - \theta\ .
			\end{align*}
		\item We calculate $P_{\lambda}(\theta^m)$ in steps according to (\ref{eq:pdef}):
			\begin{align*}
				\theta^m&
				\mapsto\frac{1}{2}
				(\theta^{m}\otimes 1+
				\theta^{m-1}\otimes \theta)
				\mapsto\frac{1}{2}
				(\theta^{m}\otimes 1+(-1)^{1-\lambda}
				\theta^{m-1}\otimes \theta)\\
				&\mapsto\frac{1}{2}
				(1\otimes \theta^{m}+(-1)^{m-\lambda}
				\theta\otimes \theta^{m-1})
				\mapsto\frac{1}{2}\theta^{m}
				(1 +(-1)^{m-\lambda})\ .
			\end{align*}
			We see that if $\lambda$ and $m$ have the same parity
			this is the identity, otherwise this is zero, i.e.\
			$P_{\lambda}$ is a projection onto $k.\theta^{\lambda}$.
		\item We calculate $\varphi_{s,t}(\theta^m)$ in steps according to
			(\ref{eq:tftphi}):
			\begin{align*}
				\theta^m\mapsto&\frac{1}{2} \sum_{n=0}^1 
				\theta^{m-n}\otimes \theta^{n}
				\mapsto \frac{1}{4} \sum_{n,p=0}^1
				\theta^{m-n}\otimes \theta^{n-p}\otimes \theta^{p}\\
				\mapsto&\frac{1}{4} \sum_{n,p=0}^1 
				(-1)^{(s+1)(n-p)+(t+1)p}
				\theta^{m-n}\otimes \theta^{n-p}\otimes \theta^{p}\\
				\mapsto&\frac{1}{4} \sum_{n,p=0}^1 
				(-1)^{(s+1)(n-p)+(t+1)p+(n-p)p}
				\theta^{m-n}\otimes \theta^{p}\otimes \theta^{n-p}\\
				\mapsto&\frac{1}{4}\theta^{m}
				\sum_{n,p=0}^1 
				(-1)^{(s+1)(n-p)+(t+1)p+(n-p)p} \\
				=&\frac{1}{4}\theta^{m}
				\sum_{n,p=0}^1 
				(-1)^{(s+1+p)(t+1+n-p)-(s+1)(t+1)} 
				=\frac{1}{2}\theta^{m}
				(-1)^{(s+1)(t+1)} \ ,
			\end{align*}
			where at the last step we execute first
			the summation over $n$ for a fixed $p$ and notice
			that we either get 0 or 2.
	\end{enumerate}
\end{proof}

Let $\funZ_{\Cl}$ denote the TFT from Theorem~\ref{thm:tft}
given by the Frobenius algebra $\Cl$ and recall from Section~\ref{sec:sigmagb} the $r$-spin structure with parametrised boundary
$\Sigma_{g,b}(s_i,t_i,u_j,\lambda_j-1)$ with only ingoing boundary components and where $g + b \ge 1$.
By calculating (\ref{eq:tftsgb}) in Proposition~\ref{prop:tftsgb} 
and using \eqref{eq:prop:sigmagb}
we get the following proposition.

\begin{proposition}\label{prop:arftft}
	The value of the TFT $\funZ_{\Cl}$ is
\begin{align}
	\funZ_{\Cl}(\Sigma_{g,b}(s_i,t_i,u_j,\lambda_j-1))(\theta^{\lambda_1}\otimes\dots\otimes \theta^{\lambda_b})
		=2^{1-g}(-1)^{\sum_{n=1}^{g}(s_n+1)(t_n+1)+\sum_{j=1}^{b-1} (u_j-u_b)\lambda_j}.
	\label{eq:tqft:arf2}
\end{align}
\end{proposition}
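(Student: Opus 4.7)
The plan is a direct computation: substitute the concrete formulas for $\Cl$ established in Lemma~\ref{lem:arffa} into the general expression \eqref{eq:tftsgb} for $Z_A(\Sigma_{g,b}(s_i,t_i,u_j,\lambda_j-1))$ from Proposition~\ref{prop:tftsgb}, evaluate on $\theta^{\lambda_1}\otimes\cdots\otimes\theta^{\lambda_b}$, and simplify the resulting sign.

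First I would track the morphism $\bigotimes_{j=1}^b(N^{-u_j-1}\circ\iota_{\lambda_j})$ applied to the input. Since $\iota_{\lambda_j}$ embeds $Z_{\lambda_j}=k\theta^{\lambda_j}$ into $\Cl$ as the obvious inclusion and $N^k(\theta^m)=(-1)^{km}\theta^m$, this produces the scalar $\prod_{j=1}^b(-1)^{(u_j+1)\lambda_j}$ times $\theta^{\lambda_1}\otimes\cdots\otimes\theta^{\lambda_b}$. Next, $\mu^{(b)}$ collapses this to $\theta^{\lambda_1+\cdots+\lambda_b}$. The factor $\prod_{i=1}^g\varphi(s_i,t_i)$ contributes $2^{-g}(-1)^{\sum_{i=1}^g(s_i+1)(t_i+1)}\,\id$, and $(\tau^{-1}\cdot(-))$ is the identity since $\tau=\eta$ by Lemma~\ref{lem:arffa}\,(1).

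Then I would apply $\eps$. This requires checking that $\theta^{\sum_j\lambda_j}$ is actually $1$, not $\theta$, so that $\eps$ produces $2$ rather than $0$. Here the admissibility constraint \eqref{eq:prop:sigmagb} enters: it gives $\sum_{j=1}^b\lambda_j\equiv 2-2g\pmod{r}$, and since $r$ is even this congruence survives modulo $2$, forcing $\sum_j\lambda_j$ to be even; therefore $\theta^{\sum_j\lambda_j}=1$ in $\Cl$ and $\eps$ yields the expected factor of $2$. Collecting all contributions gives
\[
Z_{\Cl}(\Sigma)(\theta^{\lambda_1}\otimes\cdots\otimes\theta^{\lambda_b}) = 2^{1-g}(-1)^{\sum_{i=1}^g(s_i+1)(t_i+1)+\sum_{j=1}^b(u_j+1)\lambda_j}.
\]

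The last step, and the only one requiring a small manipulation rather than a mechanical substitution, is rewriting the exponent into the form stated in \eqref{eq:tqft:arf2}. Working modulo $2$, the parity constraint $\sum_{j=1}^b\lambda_j\equiv 0$ lets me drop the $\sum_j\lambda_j$ piece arising from $\sum_j(u_j+1)\lambda_j$, leaving $\sum_{j=1}^b u_j\lambda_j$; using $\lambda_b\equiv-\sum_{j=1}^{b-1}\lambda_j\pmod{2}$ to eliminate $\lambda_b$ then produces $\sum_{j=1}^{b-1}(u_j-u_b)\lambda_j\pmod 2$, matching the claimed exponent. I expect no genuine obstacle; the only place where one must be attentive is this parity manipulation and the use of \eqref{eq:prop:sigmagb} to ensure $\eps$ is nonzero.
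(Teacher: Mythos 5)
Your proposal is correct and follows exactly the route the paper intends: the paper's own justification is the one-line remark that the proposition follows "by calculating \eqref{eq:tftsgb} in Proposition~\ref{prop:tftsgb} and using \eqref{eq:prop:sigmagb}", which is precisely the substitution of Lemma~\ref{lem:arffa} into \eqref{eq:tftsgb} followed by the mod-$2$ rewriting of the exponent that you carry out. Your explicit check that $\sum_j\lambda_j$ is even (so that $\eps$ gives $2$ rather than $0$) and the elimination of $\lambda_b$ via the parity constraint are the only non-mechanical steps, and you handle both correctly.
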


\begin{corollary}\label{cor:arfsurj}
	Assume that 
	$g\ge1$, or else that $b \ge 1$ 
	and at least one
	of the $\lambda_j$'s is odd (by \eqref{eq:prop:sigmagb} in this case $b \ge 2$ and at least two $\lambda_j$'s are odd).
	Then the following map is surjective:
	\begin{align}
		\Rc^r(\Sigma_{g,b})_{\lambda}&\to 
		\left\{ +1,-1 \right\}\nonumber\\
		\left[\Sigma_{g,b}(s_i,t_i,u_j,\lambda_j-1)\right]
		&\mapsto 2^{g-1}\cdot
		\funZ_{\Cl}(\Sigma_{g,b}(s_i,t_i,u_j,\lambda_j-1))(\theta^{\lambda_1}\otimes\dots\otimes \theta^{\lambda_b})
		\label{eq:arfsurj}
	\end{align}
\end{corollary}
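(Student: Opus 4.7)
The plan is to exhibit, for each of the two cases permitted by the hypothesis, two admissible edge index assignments giving distinct isomorphism classes in $\Rc^r(\Sigma_{g,b})_\lambda$ whose images under the map in \eqref{eq:arfsurj} have opposite signs. By Proposition~\ref{prop:arftft} the map sends $[\Sigma_{g,b}(s_i,t_i,u_j,\lambda_j-1)]$ to $(-1)^E$ with
\begin{align}
E \;=\; \sum_{n=1}^{g}(s_n+1)(t_n+1) \;+\; \sum_{j=1}^{b-1}(u_j-u_b)\lambda_j \ ,
\end{align}
so it suffices to flip the parity of $E$ while staying in $\Mc(\Sigma_{g,b})_{\lambda,\mu}^{PLCW}/\sim_\mathrm{fix}$. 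Recall from the proof of Proposition~\ref{prop:sigmagb} that, once the boundary relations $r_j \equiv R_j \Mod{r}$ are imposed and \eqref{eq:prop:sigmagb} holds, the remaining parameters $(s_i,t_i,u_j)$ range freely over $(\Zb_r)^{2g+b}$, and the only non-trivial deck transformation shifts all $u_j$ simultaneously by the same constant while leaving all $s_i,t_i$ fixed.

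First I would handle the case $g\ge 1$. Here I take both representatives to have $s_i=t_i=0$ for all $i\ge 2$, and $t_1=0$, and compare $s_1=0$ with $s_1=1$ (both valid since $r$ is even, hence $r\ge 2$). Since deck transformations do not alter the $s_i$, these two assignments represent distinct classes by Remark~\ref{rem:rscomb}. The difference in $E$ is $(1+1)(0+1)-(0+1)(0+1)=1$, so the two images differ by a sign.

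For the remaining case, $b\ge 1$ and some $\lambda_j$ odd, note first that necessarily $g=0$ in this branch of the argument if $g\ge 1$ has been dealt with already; moreover \eqref{eq:prop:sigmagb} together with $r$ even forces $\sum_i \lambda_i$ to be even, so the number of odd $\lambda_j$ is even and in particular $b\ge 2$. Relabel so that $\lambda_1$ is odd. Now I compare the representative with $u_1=0$ against the one with $u_1=1$, keeping $u_2,\dots,u_b$ and all $s_i,t_i$ fixed. Because deck transformations shift all $u_j$ by the same amount, these two assignments are inequivalent, so by Remark~\ref{rem:rscomb} they define distinct elements of $\Rc^r(\Sigma_{g,b})_\lambda$. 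The change in $E$ is $(1-u_b)\lambda_1-(0-u_b)\lambda_1 = \lambda_1$, which is odd, giving opposite signs as required.

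The only subtlety is the book-keeping to ensure the perturbed assignments remain admissible; this is automatic because the admissibility conditions \eqref{eq:vcondbdry}, \eqref{eq:vcondmiddle} at the boundary and inner vertex fix the $r_j$ and impose only the single global constraint \eqref{eq:prop:sigmagb}, both of which are untouched by varying a single $s_1$ (resp.\ a single $u_1$). I expect no real obstacle beyond checking that the two constructed representatives are indeed inequivalent, which is immediate from the description of deck transformations recalled above.
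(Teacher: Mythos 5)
Your argument is correct and is exactly the one the paper intends: the corollary is an immediate consequence of Proposition~\ref{prop:arftft}, obtained by flipping the parity of the exponent via a change of $s_1$ (for $g\ge1$) or of $u_{j_0}$ for some $j_0\le b-1$ with $\lambda_{j_0}$ odd (such an index exists because at least two $\lambda_j$ are odd), together with the observation that deck transformations fix the $s_i,t_i$ and shift all $u_j$ uniformly, so the two representatives lie in distinct isomorphism classes. The only slip is the aside ``$r$ even, hence $r\ge2$'': in the paper's conventions $r=0$ is even and allowed, but this is harmless since $\Zb_0=\Zb$ also contains both $0$ and $1$.
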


\begin{remark}
\begin{enumerate}
\item
	One can show, using a similar argument as in \cite[Sec.\,6.5]{Novak:2015phd}, 
	that for any choice of Frobenius algebra $A\in\Vect$
	with invertible window element and with $N^r=\id_A$ the TFT $\funZ_A$
	of Section~\ref{sec:state-sum-constr} is independent 
	of the $r$-spin structure.
	The idea is that if there exists a symmetric Frobenius algebra structure on 
	an algebra $A$, then $\funZ_A$ is independent of the $r$-spin structure
	for every other Frobenius algebra structure on $A$ as well.
\item
	Let $r$ be a positive integer and let us consider 
	the category of $\Zb_r$-graded $k$-vector spaces $\Vect_{\Zb_r}$.
By using the correspondence between braided monoidal structures on $\Vect_{\Zb_r}$ and quadratic forms on $\Zb_r$ \cite{Joyal:1993bmc} (see \cite[App.\,A]{Fuchs:2004r3} for a review) one can check that for odd $r$ there is only one symmetric monoidal structure on $\Vect_{\Zb_r}$. For even $r$ there are two: the trivial one inherited from $\Vect$ and the non-trivial one given by the super grading.

\item
One may wonder whether taking $\Vect_{\Zb_r}$ with some choice of 
symmetric monoidal structure would yield more examples of $r$-spin TFTs 
than what one can find with target $\Vect$ or $\SVect$. 
Part 2 shows that this is not so: All symmetric monoidal structures 
on $\Vect_{\Zb_r}$ are inherited from $\Vect$ or $\SVect$ 
(and only from the former for $r$ odd). 
Thus all algebras $A \in \Vect_{\Zb_r}$ as in Theorem~\ref{thm:tft} 
are also algebras in $\Vect$, respectively $\SVect$, with the same properties, 
and produce the same results in the state-sum construction.
\end{enumerate}
\end{remark}

\subsection{The $r$-spin Arf-invariant}\label{sec:arfinv}

\begin{definition}[{\cite[Sec.\,2.4]{Randal:2014rs}} and {\cite[Sec.\,5]{Geiges:2012rs}}]\label{def:arfinv}
Let $r \ge 0$ be even.
	The \textsl{$r$-spin Arf-invariant} of the $r$-spin surface $\Sigma_{g,b}$ is 
	\begin{align}
		&\Arf(\Sigma_{g,b})=\sum_{i=1}^g (\zeta(a_i)+1)\cdot(\zeta(b_i)+1)+
			\sum_{j=1}^{b-1} (\zeta(c_j)+1)\cdot(\zeta(\partial_j)+1)&\Mod{2}\ .
		\label{eq:arfinv}
	\end{align}
\end{definition}
Notice that for $r$ even, 
$r$-spin structures naturally factorise through $2$-spin structures.
Therefore it makes sense to talk about the Arf-invariant of them,
which was introduced for 2-spin structures \cite{Johnson:1980arf}.
A related description of $r$-spin structures and of their Arf-invariant is given in 
\cite[Def.\,5.1]{Natanzon:2004harf} in terms of so called $r$-Arf functions, and in
\cite[Sec.\,3.1]{Salter:2017mon} 
using the language of winding number functions of \cite{Humphries:1989wn}, both akin to the $\zeta(-)$ from above

$\Arf(\Sigma_{g,b})$ is invariant under the action of the mapping class group of $\Sigma_{g,b}$,
which has been proven in 
\cite[Prop.\,2.8]{Randal:2014rs} and
\cite[Lem.\,7]{Geiges:2012rs}.
We provide a different proof of this result in the
corollary to the following theorem.

\begin{theorem}\label{thm:arftft}
	The TFT $\funZ_{\Cl}$ computes the $r$-spin Arf-invariant:
\begin{align}
	\funZ_{\Cl}(\Sigma_{g,b}(s_i,t_i,u_j,\lambda_j-1))(\theta^{\lambda_1}\otimes\dots\otimes \theta^{\lambda_b}) =
		2^{1-g}\cdot
	(-1)^{\Arf(\Sigma_{g,b}(s_i,t_i,u_j,\lambda_j-1))}\ .
	\label{eq:tqft:arf}
\end{align}
\end{theorem}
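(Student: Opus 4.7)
The plan is to reduce the theorem to a straightforward mod~$2$ computation by combining the two main results already established: Proposition~\ref{prop:arftft}, which evaluates the TFT explicitly in terms of the edge parameters $s_i,t_i,u_j,\lambda_j$, and Corollary~\ref{cor:indexhol}, which identifies those same edge parameters with the holonomies of the standard generating loops $a_i,b_i,c_j,\partial_j$ appearing in Definition~\ref{def:arfinv}.

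Concretely, I would substitute the holonomies from Corollary~\ref{cor:indexhol} into the Arf-invariant formula \eqref{eq:arfinv}. This gives
\begin{align*}
\Arf(\Sigma_{g,b}(s_i,t_i,u_j,\lambda_j-1))
&\equiv \sum_{i=1}^g (s_i+1)(t_i+1) + \sum_{j=1}^{b-1} (u_j-u_b+2)(2-\lambda_j) \pmod 2.
\end{align*}
The second sum simplifies mod~$2$: expanding $(u_j-u_b+2)(2-\lambda_j)$ and dropping terms divisible by $2$ leaves $-(u_j-u_b)\lambda_j \equiv (u_j-u_b)\lambda_j \pmod 2$. Hence
\begin{align*}
\Arf(\Sigma_{g,b}(s_i,t_i,u_j,\lambda_j-1))
\equiv \sum_{i=1}^g (s_i+1)(t_i+1) + \sum_{j=1}^{b-1} (u_j-u_b)\lambda_j \pmod 2,
\end{align*}
which is exactly the exponent appearing in the right-hand side of the formula in Proposition~\ref{prop:arftft}. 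Combining this with Proposition~\ref{prop:arftft} yields \eqref{eq:tqft:arf}.

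There is essentially no obstacle beyond the elementary mod~$2$ simplification above; the real content has already been packaged into Proposition~\ref{prop:arftft} (the TFT evaluation) and Corollary~\ref{cor:indexhol} (the holonomy identification), which in turn rest on Lemma~\ref{lem:indexhol}. One small point to double-check in the write-up is the $g=0$, $b=0$ and $g=0$, $b=1$ cases: by Lemma~\ref{lem:sphere} and Corollary~\ref{cor:disk} there is a unique $r$-spin structure (when it exists), so both sides of \eqref{eq:tqft:arf} are determined, and both sums in \eqref{eq:arfinv} are empty, giving $\Arf=0$ and matching the state-sum value $2$ (resp.\ $1$). Thus the result extends uniformly to all $g,b \ge 0$.
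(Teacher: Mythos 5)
Your proposal is correct and is exactly the paper's argument: the proof in the text simply declares the result ``immediate from Proposition~\ref{prop:arftft}, Corollary~\ref{cor:indexhol} and Definition~\ref{def:arfinv}'', and your mod~$2$ substitution of the holonomies into \eqref{eq:arfinv} is precisely the omitted verification. (The only caveat is that the notation $\Sigma_{g,b}(s_i,t_i,u_j,\lambda_j-1)$ and Proposition~\ref{prop:arftft} presuppose $g+b\ge 1$, so your aside on $g=b=0$ lies outside the scope of the statement as written.)
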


\begin{proof}
This is immediate from 
Proposition~\ref{prop:arftft}, Proposition~\ref{prop:indexhol-sigma-bg} and Definition~\ref{def:arfinv}.
\end{proof}

Since the morphisms in $\Bord{r}$ are diffeomorphism classes of $r$-spin bordisms (relative boundary), we get:

\begin{corollary}
	The $r$-spin 
	Arf invariant is constant on mapping class group orbits.
\end{corollary}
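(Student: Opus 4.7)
The plan is to deduce the corollary directly from Theorem~\ref{thm:arftft} together with the fact that the bordism category $\Bord{r}$ is built from equivalence classes of $r$-spin surfaces. First I would fix a connected compact surface $\Sigma_{g,b}$ with parametrised boundary and a mapping class group element $[f]\in\mathrm{MCG}(\Sigma_{g,b})$. Given an $r$-spin structure $\zeta$ on $\Sigma_{g,b}$ compatible with the boundary parametrisation, I would compare the $r$-spin surfaces $(\Sigma_{g,b},\zeta)$ and $(\Sigma_{g,b},f^*\zeta)$ regarded as $r$-spin bordisms $\emptyset \to (\lambda_1,\dots,\lambda_b)$ (viewing all boundary components as ingoing to match the setup of Section~\ref{sec:arftft}).

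The key observation is that $f$ itself provides a diffeomorphism of $r$-spin surfaces with boundary parametrisation from $(\Sigma_{g,b},f^*\zeta)$ to $(\Sigma_{g,b},\zeta)$: by definition of the pullback spin structure, $f$ lifts to a bundle isomorphism intertwining the two $r$-spin structures, and since $f$ fixes the boundary pointwise it is in particular compatible with the germs of the boundary parametrisation. Therefore the two bordisms define the same equivalence class of $r$-spin bordisms, i.e.\ the same morphism in $\Bord{r}$.

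Since $Z_{\Cl}:\Bord{r}\to\SVect$ is a functor on $\Bord{r}$, we obtain
\[
Z_{\Cl}\bigl(\Sigma_{g,b},\zeta\bigr)(\theta^{\lambda_1}\otimes\cdots\otimes\theta^{\lambda_b})
\;=\;
Z_{\Cl}\bigl(\Sigma_{g,b},f^*\zeta\bigr)(\theta^{\lambda_1}\otimes\cdots\otimes\theta^{\lambda_b}).
\]
By Theorem~\ref{thm:arftft} the left and right hand sides equal $2^{1-g}(-1)^{\Arf(\Sigma_{g,b},\zeta)}$ and $2^{1-g}(-1)^{\Arf(\Sigma_{g,b},f^*\zeta)}$ respectively. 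Since the characteristic of $k$ is not $2$, we may cancel $2^{1-g}$, and since $(-1)^{\bullet}$ detects a residue mod $2$, we conclude $\Arf(\Sigma_{g,b},\zeta)=\Arf(\Sigma_{g,b},f^*\zeta)$ in $\Zb_2$. As $[f]$ and $\zeta$ were arbitrary, $\Arf$ is constant on mapping class group orbits.

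The only mildly subtle point to check is that Theorem~\ref{thm:arftft} is stated for connected $r$-spin surfaces of the canonical form $\Sigma_{g,b}(s_i,t_i,u_j,\lambda_j-1)$, i.e.\ using the combinatorial parametrisation of Section~\ref{sec:sigmagb}; but by Theorem~\ref{thm:rscomb} every isomorphism class of $r$-spin structures with the given boundary data is of this form, so no generality is lost. There is no real obstacle here: Theorem~\ref{thm:arftft} has already done the work of identifying $Z_{\Cl}$ with the Arf invariant, and the functoriality of $Z_{\Cl}$ on $\Bord{r}$ gives invariance under the mapping class group for free.
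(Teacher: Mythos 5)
Your proposal is correct and follows essentially the same route as the paper: the paper's proof is exactly the observation that morphisms in $\Bord{r}$ are diffeomorphism classes of $r$-spin bordisms rel boundary, so $Z_{\Cl}$ is constant on mapping class group orbits, and Theorem~\ref{thm:arftft} then transfers this invariance to the Arf invariant. Your additional remarks (cancelling $2^{1-g}$ in characteristic $\neq 2$ and appealing to Theorem~\ref{thm:rscomb} to reduce to the canonical combinatorial form) just make explicit what the paper leaves implicit.
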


\appendix

\section{From triangulations to PLCW decompositions}\label{app:novak}

By a {\em triangulation of a surface} we mean a smooth
simplicial complex for the surface
such that each boundary component
consists of 3 edges and 3 vertices.
In \cite{Novak:2015phd} a combinatorial description of $r$-spin
surfaces was given using triangulations. 
The purpose of this appendix is to show 
how to obtain the combinatorial description of $r$-spin surfaces
using PLCW decomposition of Section~\ref{sec:comb} from triangulations.

\subsection{$r$-spin surfaces with triangulations}\label{app:triang}

\begin{figure}[tb]
	\centering
	\def\svgwidth{5cm}
	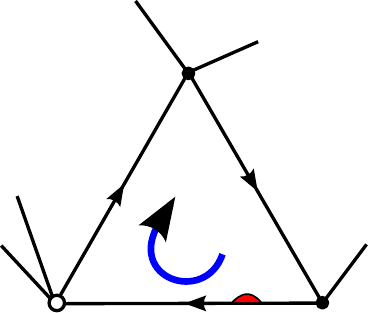
	\caption{The 3 edges and 3 vertices of a boundary component together with 
	the additional marking of one edge.
	The curly arrow shows the orientation of the boundary component and the 
	empty vertex shows the ending vertex of the additionally marked edge.}
	\label{fig:extramarking}
\end{figure}

Let us summarise the results of \cite{Novak:2015phd}.
More precisely let us look at the differences between 
that formalism and the formalism developed in Section~\ref{sec:comb}.

Let $\Sigma$ be a marked triangulation of a surface with parametrised boundary,
i.e.\ every edge has an orientation and an edge index and every face has a marked edge.
Let us assume that all boundary components are ingoing
and recall the notions of Section~\ref{sec:combsub}.
Put an additional marking on one of the edges
of each boundary component $b$.
The induced orientation of the boundary component gives a starting and ending vertex of this
additionally marked edge, see Figure~\ref{fig:extramarking}.
For a boundary vertex $u$ let $\alpha_u:=+1$ if 
it is an ending vertex for the additionally marked edge and
$\alpha_u:= 0$ otherwise.
We furthermore assume that the orientation of boundary edges agrees with the induced orientation of the boundary components.
The marking is called \textsl{admissible} for a given map
$\tilde{\lambda}:\pi_0(\partial\Sigma)\to\Zb_r$ $b\mapsto\tilde{\lambda}_b$, 
if the following hold for every inner vertex $v$ and every boundary vertex $u$ on a boundary component $b$.
\begin{align}
	\sum_{e\in \partial^{-1}(v)}\hat{s}_e&\equiv D_v-N_v+1&\Mod{r}\ ,
	\label{eq:tri:vertexcond1}\\
	\sum_{e\in \partial^{-1}(u)}\hat{s}_e&\equiv D_u-N_u+1+\alpha_u\cdot(1-\tilde{\lambda}_b)&\Mod{r}\ .
	\label{eq:tri:vertexcond2}
\end{align}
Here, $D_{v/u}$, $N_{v/u}$ and $\hat s_e$ are defined as in Section~\ref{sec:combsub}.
According to the construction in \cite[Sec.\,4.8]{Novak:2015phd} we proceed as follows:
\begin{itemize}
	\item Define an $r$-spin structure on $\Sigma$ minus edges and vertices
		by giving the interior of the faces the trivial $r$-spin structure $\Cb^0$.

	\item Define transition functions for every edge
	  	via the edge indices 
		to extend the above to $\Sigma$ minus vertices.
		Note that the transition functions are constant $\Zb_r$ valued functions, since these are transition functions for the $\Zb_r$-bundle $q:P\to F_{\Sigma}$.
	\item There is a unique $r$-spin structure $\Sigma(\underline{s})$ extending to the vertices
		if and only if the edge index assignment is admissible.
		Extend the $r$-spin structure to the vertices.
	\item The $r$-spin boundary parametrisation map is the 
		inclusion of the $r$-spin collars according to the map $\tilde{\lambda}$.
		The inclusions map 
	$1\in\Cb^{\tilde{\lambda}}$ 
		to the boundary vertex 
		determined by the extra marking of the given boundary component.
\end{itemize}

\subsection{Distinguishing in- and outgoing boundary components}\label{app:inout}

The glueing of $r$-spin surfaces with parametrised boundary 
is defined as follows. First for every $\kappa\in\Zb_r$ we
specify an $r$-spin lift 
$I_{\eps}^{\kappa}:\Cb^{\kappa}\to\Cb^{2-\kappa}$ 
($\tilde{s}^{\eps}$ in \cite[Eqn.\,(3.35)]{Novak:2015phd}) of the map $z\mapsto z^{-1}$ 
given by an element $\eps\in\Zb_r$.
Take two boundary components with $r$-spin structure on a neighbourhood of these components
$\Cb^{\kappa}$ and $\Cb^{2-\kappa}$.
We can glue these boundary components 
along their $r$-spin boundary parametrisation composed with $I_{\eps}^{\kappa}$.

To define outgoing boundary components we precompose the above boundary parametrisations 
with $I_{\eps}^{\kappa}$ 
for outgoing boundary components. 
(For convenience we will choose $\eps=0$,
as different choices of $\eps$ can be seen as composition with different $r$-spin cylinders.)
Then one can glue $r$-spin boundary components along in- and outgoing boundary parametrisations
as described in Section~\ref{sec:rspinbord}. We now give more details on the construction.

Let $\Sigma$ be an $r$-spin surface with ingoing $r$-spin boundary parametrisation 
	\begin{align*}
	\tilde{\varphi}:\bigsqcup_{b\in\pi_0(\partial\Sigma)}U_b^{\tilde{\lambda}_b}\to\Sigma
	\end{align*}
for a map $\tilde{\lambda}:\pi_0(\partial\Sigma)\to\Zb_r$ which maps $b\mapsto\tilde{\lambda}_b$.
In order to distinguish in- and outgoing boundary components
we first fix two sets $B_{in},B_{out}\subset\pi_0(\partial\Sigma)$
as in Section~\ref{sec:rspinbord}.
Let $\hat{I}:\Zb_r\to\Zb_r$ 
be the map $x\mapsto 2-x$. We define maps
$\lambda:B_{in}\to\Zb_r$ and $\mu:B_{out}\to\Zb_r$ by
$\lambda:=\tilde{\lambda}|_{B_{in}}$ and $\mu:=\hat{I}\circ\tilde{\lambda}|_{B_{out}}$.
For the in- and outgoing $r$-spin boundary parametrisations we set
\begin{align*}
	\varphi_{in}:=\tilde{\varphi}|_{\bigsqcup_{b\in B_{in}}U_b^{\lambda_b}}\ 
		\text{ and }\ 
		\varphi_{out}:=\tilde{\varphi}|_{\bigsqcup_{c\in B_{out}}U_c^{\lambda_c}} \circ\left(\bigsqcup_{c\in B_{out}}I_0^{\lambda_c}\right)
\end{align*}
respectively.
The admissibility condition \eqref{eq:tri:vertexcond2} needs to be changed since we are parametrising
outgoing boundary components $c\in B_{out}$ with $\Cb^{2-\tilde{\lambda}_c}$ instead of with $\Cb^{\tilde{\lambda}_c}$.
This means that for a vertex $u$ on an outgoing boundary component $c$ the factor $\alpha_u$ needs to be 
$-1$ instead of $+1$, since $1-(2-\tilde{\lambda}_c)=-(1-\tilde{\lambda}_c)$.

\subsection{Refining PLCW decompositions of $r$-spin surfaces}\label{app:refine}

\begin{figure}[tb]
	\centering
	\def\svgwidth{8cm}
	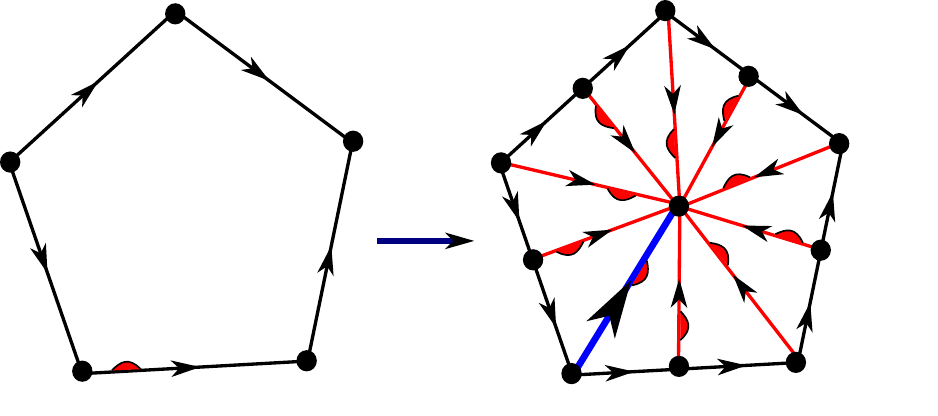
	\caption{New edge indices after a series of radial subdivision.
		The new edge connecting the new vertex in the middle with the vertex 
		which was in the clockwise direction of the marked side of the face 
		(cf. Figure~\ref{fig:clockwisevertex}) has edge index -2, 
		all other new edges inside the face have edge index -1.
		The admissibility conditions \eqref{eq:tri:vertexcond1} and \eqref{eq:tri:vertexcond2} 
		at the vertices remain unchanged at the old vertices 
		and they are satisfied at the new vertices.}
	\label{fig:lem:subdiv}
\end{figure}

By a \textsl{series of radial subdivisions} we mean radially subdividing
the 1-cells and then the 2-cells, see Figure~\ref{fig:lem:subdiv}. 
This means splitting each edge in two by adding a vertex, 
adding a vertex to the interior of each face and adding edges 
between this new vertex and all other vertices of this face.
The following lemmas follow from straightforward calculations.

\begin{lemma} \label{lem:subdiv}
	Let $L$ be a PLCW decomposition obtained by a series of radial 
	subdivisions from a 
	PLCW decomposition $K$
	with admissible marking. 
	Assign to new edges the markings, orientations
	and edge labels as shown in Figure~\ref{fig:lem:subdiv}.
	The vertex conditions \eqref{eq:tri:vertexcond1} and \eqref{eq:tri:vertexcond2}
	are satisfied at the old and
	new vertices.
\end{lemma}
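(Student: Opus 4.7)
The plan is to verify the admissibility conditions \eqref{eq:tri:vertexcond1}--\eqref{eq:tri:vertexcond2} at every vertex of $L$. The vertices split into three disjoint types: (i) the old vertices of $K$, (ii) the midpoint vertices created on each subdivided edge, and (iii) the center vertex created inside each old face. Since the radial subdivision of each face is local and independent of the others, the verification is purely local, and for an old vertex $v$ the task will be to show that the $L$-condition at $v$ reduces to the $K$-condition at $v$.

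First I would fix, for each old $n_f$-gon face $f$, the orientations of the new edges (half-edges and spokes to the center) and the marked-edge data for each of the $2n_f$ new triangles, in the manner implicit in Figure~\ref{fig:lem:subdiv}. With these in place, the contribution to $\sum \hat{s}_e$ at an old vertex $v$ from the subdivided old edges is unchanged: each old edge $e$ incident to $v$ becomes a half-edge incident to $v$ with the same orientation and index $s_e$. Hence only the new spokes from the centers of the faces of $K$ adjacent to $v$ contribute extra to the $L$-sum. For type~(i) the main computation is then to show that for each old face $f$ incident to $v$, the combined shift in $\sum \hat{s}_e$, $N_v$ and $D_v$ arising from the pair of new triangles of $f$ touching $v$ cancels. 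This is the step in which the $-2$ edge index of Figure~\ref{fig:lem:subdiv} is essential: it produces an extra $+1$ in $\sum \hat{s}_e$ at exactly the old vertex sitting clockwise of the marked edge of $f$, which is precisely the vertex where $D_v^L$ receives an extra contribution from the inherited marked-edge data on the new triangles.

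For types~(ii) and~(iii) the admissibility is checked by direct counting. At a midpoint vertex $m$ of an interior edge there are four edges attached in $L$ (two halves with compensating orientations, and two spokes); at a center vertex $c$ of an $n_f$-gon there are $2n_f$ spokes, one with index $-2$ and the rest with index $-1$; in both cases $N$, $D$ and $\sum\hat{s}$ are explicit small quantities whose values combine to satisfy \eqref{eq:tri:vertexcond1}. The boundary analogue of type~(ii)—a midpoint on a boundary edge—is handled using the boundary condition \eqref{eq:tri:vertexcond2} and the convention for $\alpha_u$ inherited from $K$. The main obstacle is not conceptual but combinatorial: it is the careful bookkeeping of orientations and of the ``clockwise vertex of marked edge'' data for every new triangle, and the key insight is that the $-2$ edge index in Figure~\ref{fig:lem:subdiv} is engineered precisely to produce the cancellation at old vertices described above.
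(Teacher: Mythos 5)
Your proposal is correct and coincides with the paper's (largely implicit) argument: the paper dismisses this lemma as a ``straightforward calculation'', which is exactly the local, vertex-by-vertex verification you outline, split into old vertices, edge midpoints and face centres, with the $-2$ index on the spoke to the clockwise vertex of the marked edge being the point that makes the bookkeeping close up at old vertices. Nothing further is needed beyond actually carrying out the finite counts you describe.
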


Since we assumed that every boundary component consists of a single vertex and a single edge,
applying two series of radial subdivisions gives four vertices and four edges on each boundary component.
In order to get a triangulation we will modify this refinement as follows.

\begin{figure}[tb]
	\centering
	\def\svgwidth{8cm}
	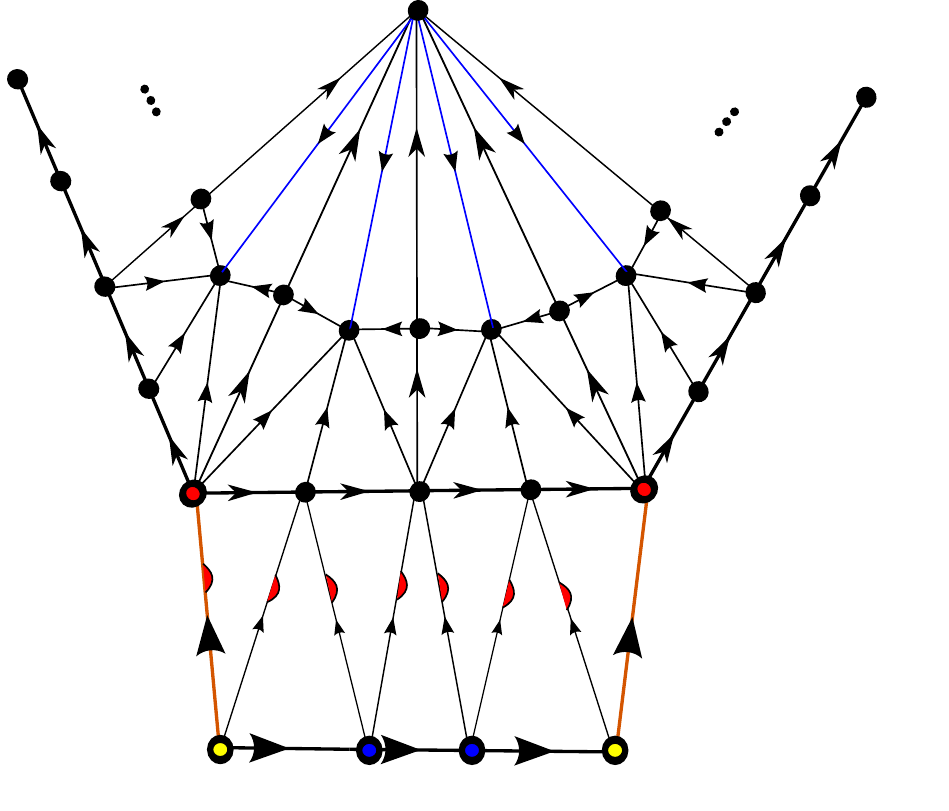
	\caption{Refinement at a boundary component $b$. Edges without labels have edge index -1, 
		the edges between $v_0$ and $v_1$ with edge label $r_b$ are identified.}
	\label{fig:lem:subdivb}
\end{figure}

\begin{lemma} \label{lem:subdivb}
	Let $L$ be a marked PLCW decomposition obtained by applying the steps in
	Lemma~\ref{lem:subdiv} twice on another 
	PLCW decomposition $K$
	with admissible marking. 
	Add 7 triangles at each boundary component and assign the marking
	to the new edges as shown in Figure~\ref{fig:lem:subdivb}
	and put the extra markings on edges on boundary components so that the ending vertex is $v_0$ in Figure~\ref{fig:lem:subdivb}.
	Then the conditions \eqref{eq:tri:vertexcond1} and \eqref{eq:tri:vertexcond2} hold at old and new vertices.
\end{lemma}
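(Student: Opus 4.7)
The plan is to leverage Lemma~\ref{lem:subdiv} for the bulk of the surface and then directly verify the admissibility conditions at each vertex in the modified boundary strips, which is a finite local calculation.

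First, by applying Lemma~\ref{lem:subdiv} twice to $K$, one obtains an intermediate marked PLCW decomposition $L'$ on which the admissibility conditions \eqref{eq:tri:vertexcond1} and \eqref{eq:tri:vertexcond2} already hold at every vertex. The addition of the seven triangles per boundary component to form $L$ affects only an annular neighbourhood of each boundary component; therefore the neighbourhood of every vertex of $L'$ that lies away from a boundary component is unchanged in $L$, and condition \eqref{eq:tri:vertexcond1} is automatically inherited at every such vertex.

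The remaining work is then a finite, local check for each boundary component. Specifically, one must verify \eqref{eq:tri:vertexcond1} at the former boundary vertex of $L'$ (labelled $v_0$ in Figure~\ref{fig:lem:subdivb}), which becomes an interior vertex of $L$, and \eqref{eq:tri:vertexcond2} at the three new boundary vertices $v_1,v_2,v_3$. For each such vertex $v$ I would tabulate:
\begin{itemize}
\item the incident edges with their orientations and edge indices ($-1$ on the unlabelled new edges, $-2$ and $0$ on the specific edges identified in Figure~\ref{fig:lem:subdivb}, $r_b$ on the identified edges corresponding to the old boundary, and the parameters coming from $L'$ on the outward-facing edges);
\item the number $D_v$ of seven-triangle faces having $v$ as the clockwise vertex of their marked edge;
\item the total number $N_v=N_v^{\mathrm{start}}+N_v^{\mathrm{end}}$ of edge-endpoints at $v$;
\item for a boundary vertex, whether $\alpha_v=1$ (i.e.\ $v$ is the ending vertex of the extra-marked boundary edge) or $\alpha_v=0$.
\end{itemize}
Plugging each of these into both sides of the admissibility equation and reducing modulo $r$ yields the claim, provided one uses the fact that the same conditions already held at the corresponding vertices of $L'$ to cancel the contributions from the edges shared with $L'$.

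The main obstacle is pure bookkeeping: the edge index $-2$ on one distinguished new edge (rather than $-1$ on the others) is designed precisely to compensate for the boundary correction $\alpha_v(1-\tilde\lambda_b)$ that was present at the old boundary vertex of $L'$ but is no longer needed once this vertex becomes interior; similarly the label $0$ on certain new edges shifts the correction from $v_0$ to the new boundary vertex at which $\alpha=1$. Once these conventions from Figure~\ref{fig:lem:subdivb} are read off correctly, the verification reduces to a short arithmetic identity at each of the vertices $v_0,v_1,v_2,v_3$, and no conceptual input beyond the proof of Lemma~\ref{lem:subdiv} is required.
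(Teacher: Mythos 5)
Your proposal takes essentially the same route as the paper's own (sketched) proof: the authors likewise inherit the conditions away from the boundary from the twice-applied Lemma~\ref{lem:subdiv} and then reduce the claim to a finite verification of \eqref{eq:tri:vertexcond1} and \eqref{eq:tri:vertexcond2} at the handful of vertices touched by the seven added triangles, which they state ``can be checked by hand at every vertex'' under the simplifying assumption that only one interior edge meets the old boundary vertex (the general case being analogous). The only bookkeeping caveat is that, per the lemma statement, the extra-marked boundary edge ends at $v_0$, so $v_0$ is tested with the boundary condition \eqref{eq:tri:vertexcond2} with $\alpha_{v_0}=1$ rather than becoming an interior vertex as you assert; this does not affect the validity of the method.
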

\begin{proof}[Sketch of proof]
	Let us assume that at each boundary component 
	there are only two edges connecting to the single vertex: 
	the boundary edge and another one coming from the interior of 
	the surface. In such a situation the refinement is shown in
	Figure~\ref{fig:lem:subdivb}. 
	The conditions \eqref{eq:tri:vertexcond1} and \eqref{eq:tri:vertexcond2}
	can be checked by hand at every vertex.

	If there are boundary components where more edges connect to the boundary vertex from the interior in the original PLCW decomposition,
	checking the conditions \eqref{eq:tri:vertexcond1} and \eqref{eq:tri:vertexcond2} is similar, but we omit the figure here.
\end{proof}

	We now have all the ingredients needed to define 
	an $r$-spin structure with $r$-spin boundary parametrisation
	using the tools developed by \cite{Novak:2015phd}. We proceed as follows.
\begin{itemize}
	\item Take a surface with parametrised boundary
		and a marked PLCW decomposition with some edge indices
		$s$ and maps $\lambda:B_{in}\to\Zb_r$ and $\mu:B_{out}\to\Zb_r$.
	\item Refine this marked PLCW decomposition
		as described in Lemma~\ref{lem:subdivb}.
		This is a triangulation by \cite[Thm.\,6.3]{Kirillov:2012pl}.
\end{itemize}
The new marking obtained this way is admissible in the sense of
\cite{Novak:2015phd} (i.e.\ \eqref{eq:tri:vertexcond1} and \eqref{eq:tri:vertexcond2} hold) 
if and only if the marking of the original PLCW decomposition is admissible
in the sense of Section~\ref{sec:combsub} 
(i.e.\ \eqref{eq:vertexcond1} and \eqref{eq:vertexcond2} hold).

\begin{definition}\label{def:rspintriang}
	Let $\Sigma(s,\lambda,\mu)$ denote the $r$-spin structure on 
	$\Sigma$ obtained by the above steps.
\end{definition}

\subsection{Proofs for Section~\ref{sec:comb}}\label{app:proofs}
\begin{proof}[Proof of Lemma~\ref{lem:moves}]

	\begin{figure}[tb]
		\centering
		\def\svgwidth{16cm}
		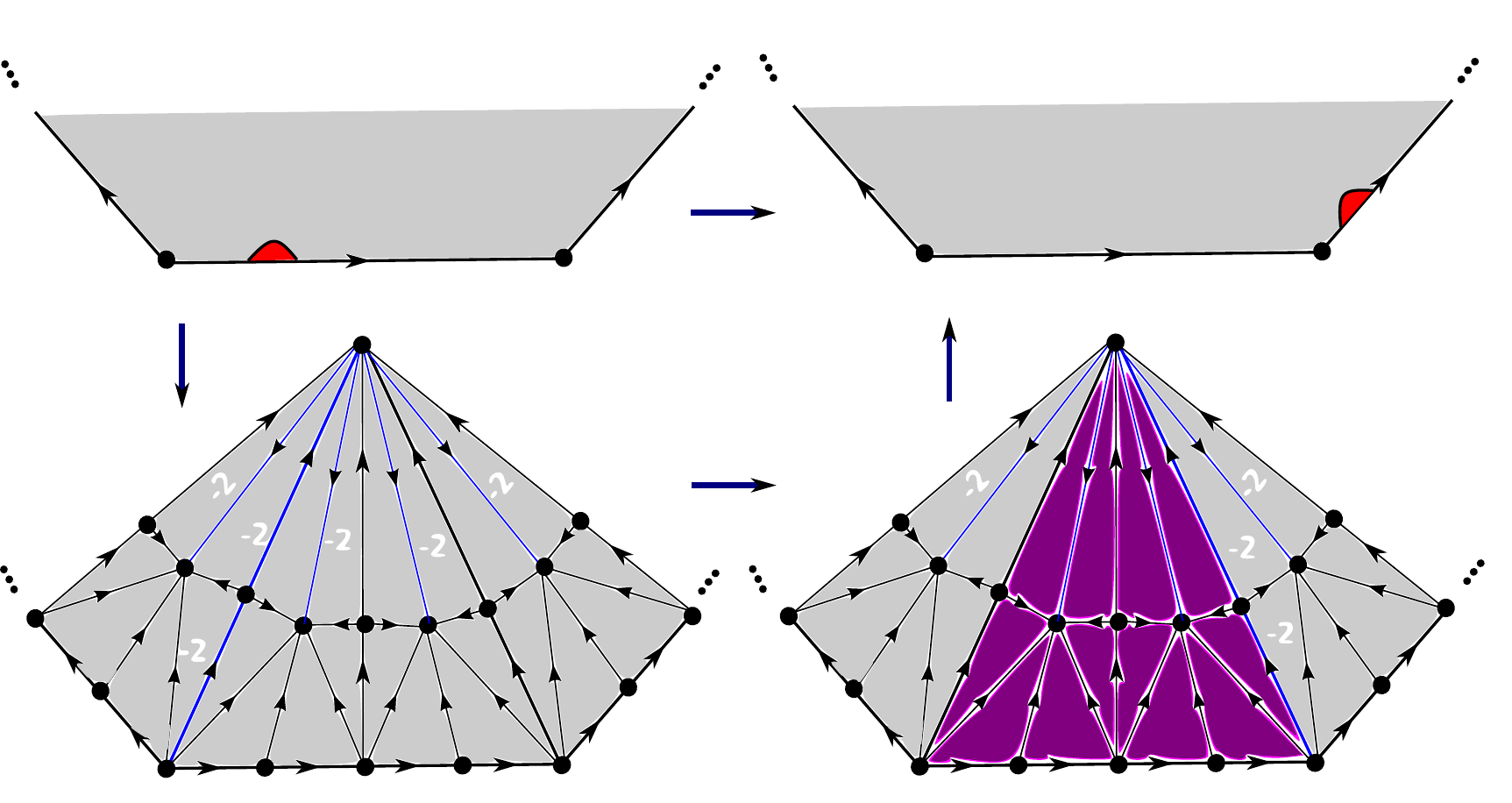
		\caption{Shifting the marking on a face of a PLCW decomposition clockwise. All unlabeled edges have index -1.
			$a)$ Part of a face of a marked PLCW decomposition showing the marked edge.
			$b)$ The corresponding triangulation after two series of radial subdivisions.
			$c)$ Execute a deck transformation on the 12 filled triangles.
			$d)$ The PLCW decomposition with shifted marked edge which produces the triangulation shown in $c)$.}
		\label{fig:move2}
	\end{figure}

	Operation~\ref{lem:moves:1} follows directly from part 2 of \cite[Lem.\,4.11]{Novak:2015phd}.
	
	For Operation~\ref{lem:moves:3} do a deck transformation 
	\cite[Part 1 of Lem.\,4.11]{Novak:2015phd} on all triangles inside the polygon.

	For Operation~\ref{lem:moves:2} first notice that moving the marking of a polygon to the next clockwise edge
	amounts to changing the edge indices as in Figure~\ref{fig:move2}. 
	This is done by a deck transformation on all filled triangles.

	It is a straightforward calculation to show that these operations commute with each other.
\end{proof}

\begin{proof}[Proof of Theorem~\ref{thm:rscomb}]
Let $\Sigma$ be a surface with PLCW decomposition. Let $\Sigma'$ the same surface, but now with a triangulation as obtained by a two-fold series of radial subdivisions as in Section~\ref{app:refine}. For clarity, in this proof we will write $\underline\Sigma$ for the surface without decomposition underlying both $\Sigma$ and $\Sigma'$.

	In \cite[Sec.\,4.8]{Novak:2015phd} $\Mc(\Sigma')_{\tilde{\lambda}}^{\mathrm{triang}}$
	the set of admissible markings for a fixed triangulation of $\underline\Sigma$ with only ingoing boundary components
	and fixed map $\tilde{\lambda}$ has been defined
	along with a similar equivalence relation as $\sim_{fix}$, which we denote by $\sim_{\mathrm{fix}}^{\mathrm{triang}}$.
	\cite[Thm.\,4.18]{Novak:2015phd} gives the isomorphism from 
	the quotient of this set by $\sim_{\mathrm{fix}}^{\mathrm{triang}}$
	to $\Rc^r(\underline\Sigma)_{\tilde{\lambda}}$ the isomorphism classes of $r$-spin structures.
	By a simple reparametrisation 
	as in Section~\ref{app:inout}
	one obtains from this the
	set of admissible markings for in- and outgoing boundary components $\Mc(\Sigma')_{\lambda,\mu}^{\mathrm{triang}}$ and
	the set of isomorphism classes of 
	$r$-spin structures with in- and outgoing boundary components $\Rc^r(\underline\Sigma)_{\lambda,\mu}$. Thus we get a bijection
\be\label{eq:discrete-to-geom-part1}
	\Mc(\Sigma')_{\lambda,\mu}^{\mathrm{triang}} / \sim_{\mathrm{fix}}^{\mathrm{triang}}~
	\xrightarrow{~~f~~} ~\Rc^r(\underline\Sigma)_{\lambda,\mu} \ .
\ee

	Let us denote by $\alpha:\Mc(\Sigma)_{\lambda,\mu}^{\mathrm{PLCW}}\to\Mc(\Sigma')_{\lambda,\mu}^{\mathrm{triang}}$ 
	the map that sends a marked PLCW decomposition to its refinement according to Section~\ref{app:refine}.
	Since the generators of the equivalence relation $\sim_{\mathrm{fix}}$ are built up from
	generators of the equivalence relation $\sim_{\mathrm{fix}}^{\mathrm{triang}}$
	(see the proof of Lemma~\ref{lem:moves} above),
	we get a well defined map 
\be\label{eq:discrete-to-geom-part2}
	\Mc(\Sigma)_{\lambda,\mu}^{\mathrm{PLCW}}/\sim_{\mathrm{fix}}~
	\xrightarrow{~~\bar{\alpha}~~} ~
	\Mc(\Sigma')_{\lambda,\mu}^{\mathrm{triang}}/\sim_{\mathrm{fix}}^{\mathrm{triang}}
	\ .
\ee
By construction the composition of the maps \eqref{eq:discrete-to-geom-part1} and \eqref{eq:discrete-to-geom-part2} is the map \eqref{eq:rscomb} in the statement of the theorem. It therefore remains to show that $\bar\alpha$ is a bijection.
	
\medskip
	\begin{figure}[tb]
		\centering
		\def\svgwidth{6cm}
		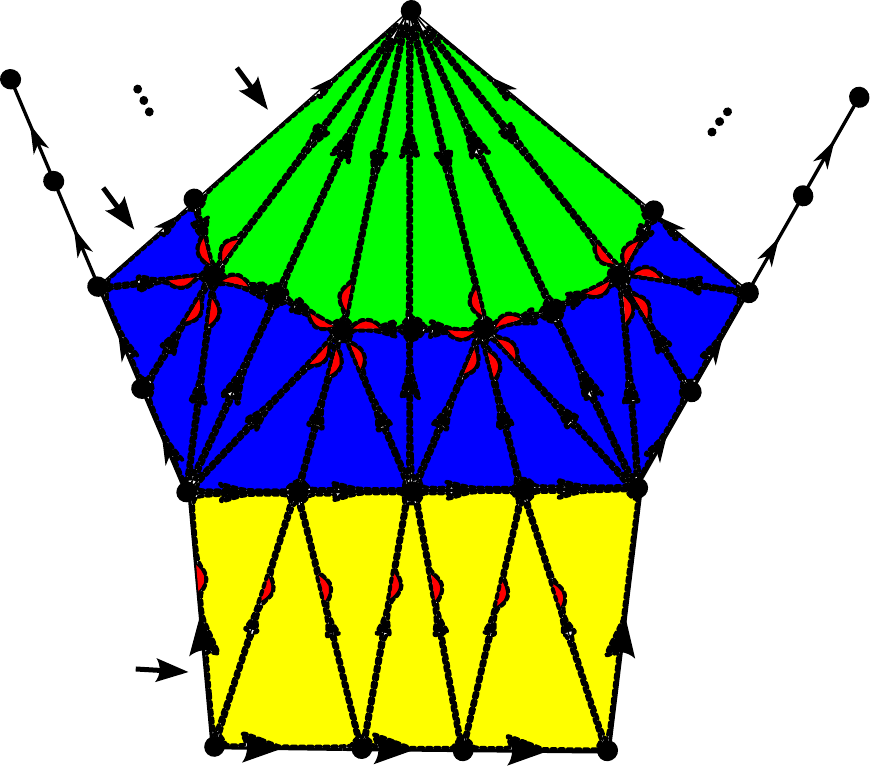
		\caption{
To convert the edge indices of all edges of the triangulation in the interior of some face of the PLCW decomposition to the form shown in Figure~\ref{fig:lem:subdivb} apply the following algorithm to all faces: I) Pick a triangle in area I; proceeding clockwise around the vertex, use deck transformations on each triangle to bring the edge index of each edge radiating from the central vertex to the prescribed value ($-1$ or $-2$); 
note that the final edge in this procedure automatically has the correct index due to the admissibility condition around the central vertex. II) Pick a triangle $t$ in region II which shares an edge with region I but whose neighbour $t'$ in anti-clockwise direction of region II does not. Use a deck transformation on $t$ to set the edge index of the edge on the boundary of region I to the value in Figure~\ref{fig:lem:subdivb}; proceed clockwise around region II setting the edge index between two triangles of region II to the correct value; the edge indices between II and I are determined by the admissibility condition (and so automatically as stated in Figure~\ref{fig:lem:subdivb}); finally, the edge between $t'$ and $t$ has the correct value by the admissibility condition around the vertex between region I and II shared by $t$ and $t'$. III) If the face in question has a boundary component, then in region III one proceeds in the same way as in region II.
			}
		\label{fig:rsubdiv_bdry_pf}
	\end{figure}

\noindent
\textsl{$\bar\alpha$ is surjective:}
	Let $(m',o',s')$ be an admissible marking of $\Sigma'$.
	As a first step, use the relation $\sim_{\mathrm{fix}}^{\mathrm{triang}}$ to change
the edge markings $m'$ and orientations $o'$ to the form prescribed in 
	Section~\ref{app:refine}, resulting in a marking $(m'',o'',s'')$.
	Next follow the algorithm described in Figure~\ref{fig:rsubdiv_bdry_pf} to bring all edge indices of $\Sigma'$ in the interior of faces of $\Sigma$ to the form shown in Figure~\ref{fig:lem:subdivb}. Denote the resulting marking by $(m'',o'',\tilde s)$.
Let $e$ be an interior edge of $\Sigma$ and let $e_1,\dots,e_4$ be the edges of $\Sigma'$ which cover $e$, and $v_{12}$, $v_{23}$, $v_{34}$ the three additional vertices on $e$. The admissibility condition around $v_{12}$, $v_{23}$, $v_{34}$ implies that the edge indices on $e_1,\dots,e_4$ must all be equal. The same argument shows that edge indices on boundary components are all equal. This shows that $(m'',o'',\tilde s)$ lies in the image of $\alpha$.
	
\medskip

\noindent
\textsl{$\bar\alpha$ is injective:}
	Let $(m,o,s),(m',o',s')\in\Mc(\Sigma)_{\lambda,\mu}^{\mathrm{PLCW}}$ such that
	$\bar{\alpha}[(m,o,s)]=\bar{\alpha}[(m',o',s')]$, 
	i.e.\ $\alpha(m,o,s)\sim_{\mathrm{fix}}^{\mathrm{triang}} \alpha(m',o',s')$.
	Notice that Lemma~\ref{lem:fixed-mo-relation} and Remark~\ref{rem:rscomb} apply
	to marked triangulations as well. This means that we can assume 
	that the marked edges and the edge orientations agree ($m=m'$ and $o=o'$) 
	for the PLCW decomposition and the triangulation as well.
	Furthermore, $\alpha(m,o,s)$ and $\alpha(m,o,s')$ are related by 
	a series $D$ of deck transformation on the triangulation: $D( \alpha(m,o,s)) = \alpha(m,o,s')$.
	
	Write $\delta_\Delta(k)$ for a deck transformation by $k$ units on the triangle $\Delta$ of the triangulation of $\Sigma'$. 
	Deck transformations on different triangles commute, so we can write the sequence of deck transformations as $D = \prod_\Delta \delta_\Delta(k_\Delta)$.
	It is not hard to see 
	that the identity $D( \alpha(m,o,s) ) = \alpha(m,o,s')$ requires the $k_\Delta$ for all $\Delta$ belonging to a given face of the PLCW-decomposition of $\Sigma$ to be equal. But this precisely means that $D$ can be written as a product of deck transformations on the PLCW-decomposition of $\Sigma$, i.e.\ $(m,o,s) \sim_{\mathrm{fix}} (m',o',s')$.
\end{proof}

	In the following we are going to give some tools that relate different
	marked triangulations and marked PLCW decompositions which parametrise
	isomorphic $r$-spin structures.
First we recall \cite[Prop.\,4.19 and 4.20]{Novak:2015phd}.

\begin{lemma}
	Let $\Sigma$ and $\Sigma'$ be two $r$-spin surfaces with triangulation 
	and with the same underlying surface related by
	a Pachner 3-1 or 2-2 move as in Figure~\ref{fig:pachner31}~and~\ref{fig:pachner22}. 
	Then these two $r$-spin structures are isomorphic.
	\label{lem:pachner}
	\begin{figure}[tb]
		\centering
		\def\svgwidth{8cm}
		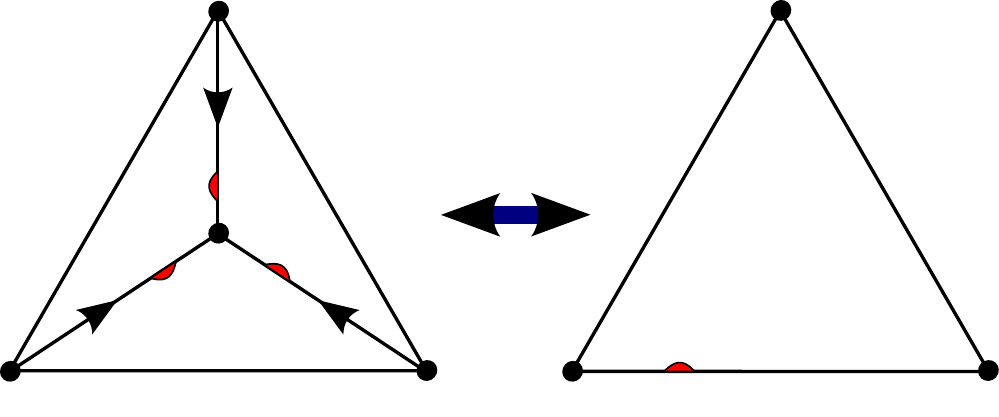
		\caption{Pachner 3-1 move}
		\label{fig:pachner31}
	\end{figure}
	\begin{figure}[tb]
		\centering
		\def\svgwidth{8cm}
		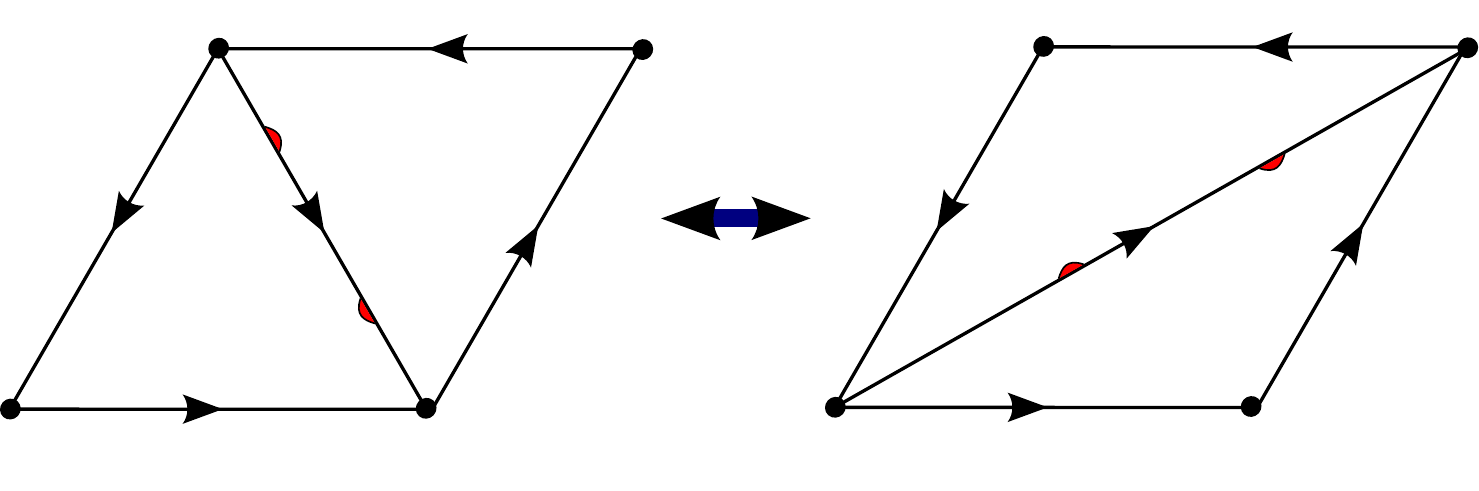
		\caption{Pachner 2-2 move}
		\label{fig:pachner22}
	\end{figure}
\end{lemma}

We define the \textsl{$T_n$-moves} for $n\ge2$ as in Figure~\ref{fig:tnmove},
which takes a $2n$-gon glued together from $2n$ triangles to a $2n$-gon glued together
from $2(n-1)$ triangles.
	\begin{figure}[tb]
		\centering
		\def\svgwidth{12cm}
		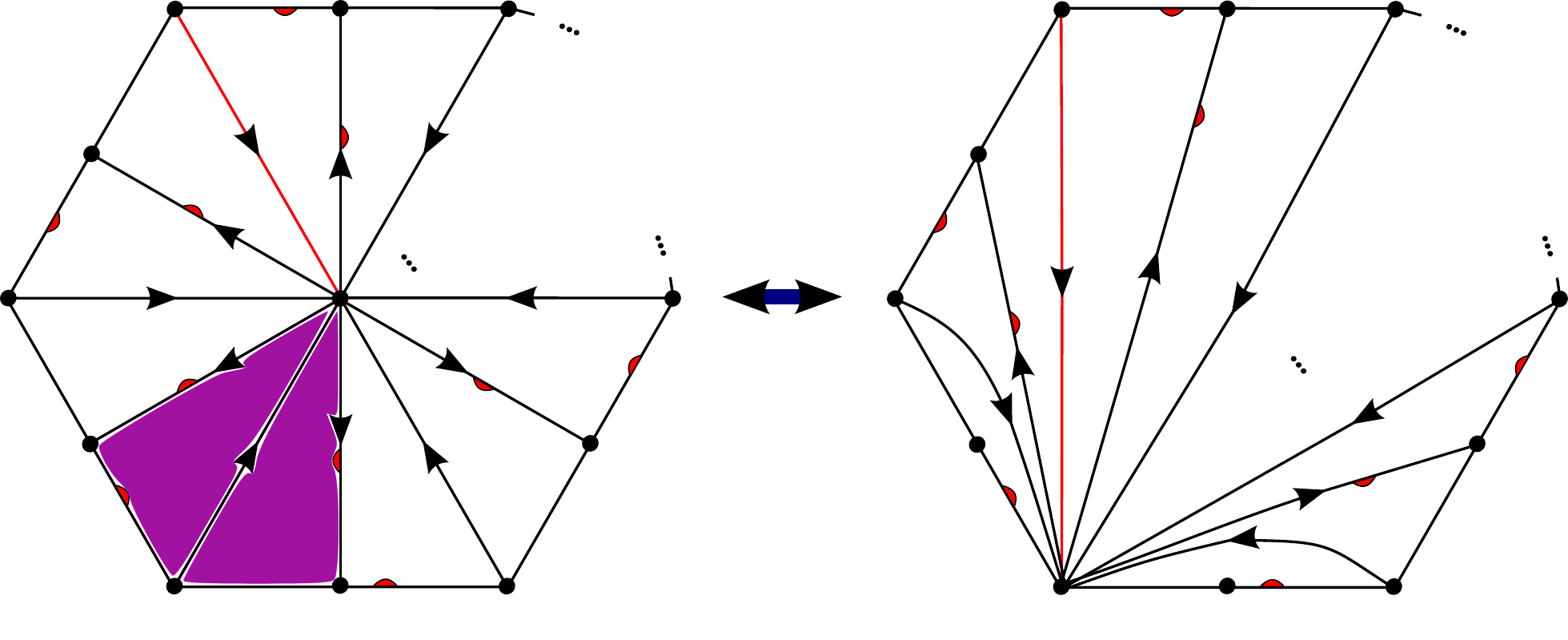
		\caption{$T_n$ move for $n\ge2$. 
			We remove or add the filled triangles.}
		\label{fig:tnmove}
	\end{figure}
\begin{lemma}
	The $T_n$ move induces an isomorphism of $r$-spin structures.
	\label{lem:tnmove}
\end{lemma}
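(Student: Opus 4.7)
The plan is to realize the $T_n$-move as a composition of Pachner moves, each of which induces an isomorphism of $r$-spin structures by Lemma~\ref{lem:pachner}. The left-hand side of Figure~\ref{fig:tnmove} depicts a $2n$-gon $P$ triangulated as a fan of $2n$ triangles $\Delta_1,\dots,\Delta_{2n}$ meeting at a central vertex $v_0$ of degree $2n$; label them cyclically so that $\Delta_1,\Delta_2$ are the two shaded triangles that carry the boundary-edge indices $s_1,s_2$. The right-hand side has the same outer $2n$-gon, but $v_0$ and the two shaded triangles are absent, and the complementary region is retriangulated as a fan of $2(n-1)$ triangles around a different interior vertex.

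I would first apply $2n-3$ successive Pachner 2-2 moves, each flipping an interior edge of the form $[v_0,w]$ that is adjacent to one of the triangles $\Delta_3,\Delta_4,\dots,\Delta_{2n-1}$, proceeding in cyclic order so that at each step the degree of $v_0$ drops by one. After these moves the vertex $v_0$ has degree exactly $3$, being incident to the unchanged triangles $\Delta_1$ and $\Delta_2$ together with a third triangle obtained from the last 2-2 move, while the other $2n-3$ triangles have been ``combed away'' from $v_0$ into the target configuration. A single Pachner 3-1 move then removes $v_0$ and its three incident triangles in favour of a single triangle, and the resulting triangulation matches the right-hand side of Figure~\ref{fig:tnmove}.

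Since each intermediate step is either a Pachner 2-2 or a Pachner 3-1 move, Lemma~\ref{lem:pachner} implies each yields an isomorphism of $r$-spin structures, and so does their composition. What remains is the bookkeeping on edge indices: one must check that the cumulative shifts prescribed by Figures~\ref{fig:pachner22} and \ref{fig:pachner31} agree exactly with the shifts displayed in Figure~\ref{fig:tnmove}, in particular that the boundary edges of $\Delta_1\cup\Delta_2$ are shifted $s_1\mapsto s_1+1$, $s_2\mapsto s_2+1$ and that the interior edges produced by the 2-2 moves have the indices $-1$, $-2$ shown on the right-hand side.

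The main obstacle is precisely this index bookkeeping, because each of the $2n-3$ 2-2 moves contributes a non-trivial shift, and the shifts accumulate along the sequence. The shifts on edges that lie away from $\Delta_1\cup\Delta_2$ do not need to be matched directly: by Lemma~\ref{lem:moves}\,(\ref{lem:moves:3}) any consistent shift of all edges of an interior face can be absorbed by a deck transformation on that face, which leaves the isomorphism class of the $r$-spin structure unchanged. Using this freedom, it suffices to track the indices of the three edges modified by the final 3-1 move and the two outer edges of $\Delta_1,\Delta_2$. A straightforward induction on $n$ (or equivalently, on the number of 2-2 moves already performed) then shows that the accumulated shift on the two outer edges is exactly $+1$, matching Figure~\ref{fig:tnmove}, which completes the proof.
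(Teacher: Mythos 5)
Your overall strategy coincides with the paper's: realize the $T_n$-move as a composite of Pachner $2$-$2$ and $3$-$1$ moves and invoke Lemma~\ref{lem:pachner}; the two arguments differ in how each step is executed. For the decomposition, the paper argues by induction on $n$: the base case $T_2$ is one $2$-$2$ move followed by one $3$-$1$ move (Figure~\ref{fig:t2move}), and $T_{n+1}$ is reduced to $T_n$ by two further $2$-$2$ moves (Figure~\ref{fig:tnmove_ind}); you instead ``comb'' the interior vertex down to degree $3$ by $2n-3$ successive edge flips and finish with a single $3$-$1$ move. Both decompositions are valid, but the inductive one has the advantage that only two fixed, small decorated configurations ever need to be examined, independently of $n$. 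For the edge indices, the paper does not track accumulated shifts at all: since the Pachner moves alter the marking only near the vertices they touch, and since the resulting marking must still satisfy the admissibility condition \eqref{eq:vertexcond1} at those vertices, the new edge indices are forced (up to the equivalence of Lemma~\ref{lem:moves}) to be those displayed in Figure~\ref{fig:tnmove} --- this sidesteps exactly the ``main obstacle'' you identify. Your alternative of tracking the cumulative shifts through all $2n-3$ flips and absorbing the residue by deck transformations can be made to work, but be aware that a deck transformation realizes only the specific shift pattern of Lemma~\ref{lem:moves}\,(\ref{lem:moves:3}) (signs dictated by edge orientations, identified edges unchanged), so the claims that the residual discrepancy is of this form and that the two outer edges pick up exactly $+1$ are precisely the computations your ``straightforward induction'' must supply; as written they are asserted rather than carried out. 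If you want to avoid that bookkeeping entirely, the paper's argument via the vertex admissibility conditions is the efficient way to close the proof.
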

\begin{proof}
	First we show that one can obtain the $T_n$ move on a triangulation
	without any marking by a series of Pachner moves by induction on $n$.
	For $n=2$ do a Pachner 2-2 move and then a Pachner 3-1 move as in Figure~\ref{fig:t2move}.
	Now assume that the statement holds for $n$ and show for $n+1$.
	First we do two Pachner 2-2 moves and then apply a $T_n$ move as in Figure~\ref{fig:tnmove_ind}
	to get exactly the $T_{n+1}$ move.
	\begin{figure}[tb]
		\centering
		\def\svgwidth{14cm}
		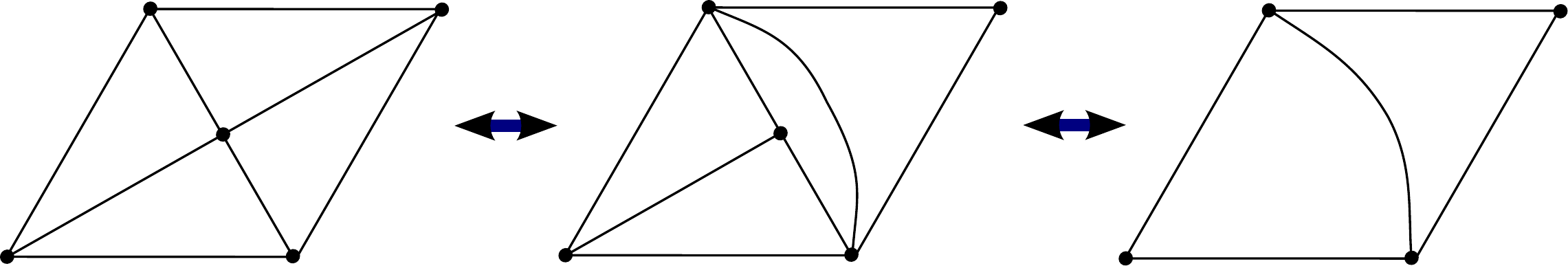
		\caption{$T_2$ move without marking}
		\label{fig:t2move}
	\end{figure}
	\begin{figure}[tb]
		\centering
		\def\svgwidth{16cm}
		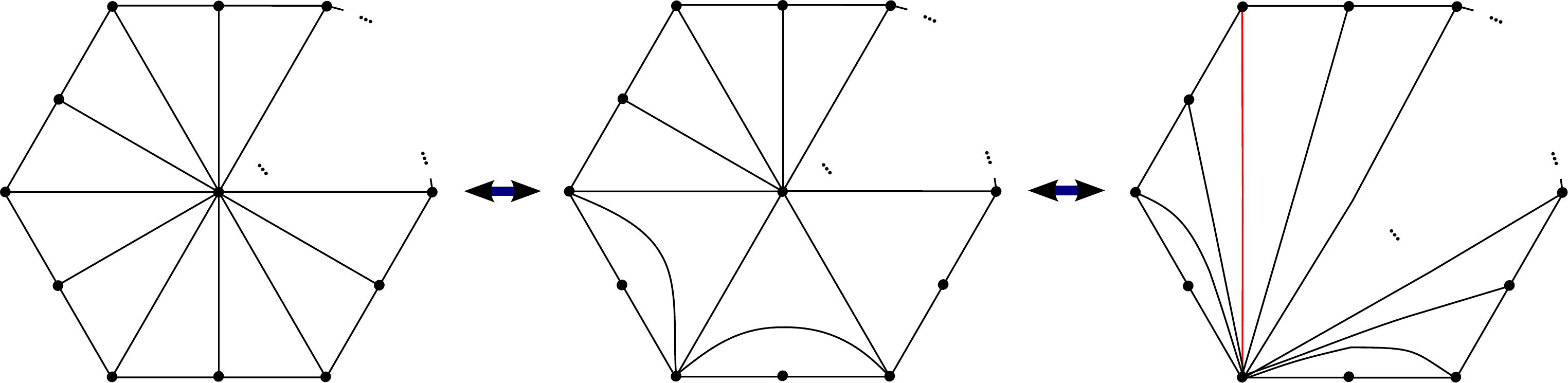
		\caption{Induction step for the $T_{n+1}$ move}
		\label{fig:tnmove_ind}
	\end{figure}

	Since the Pachner moves in Figures~\ref{fig:pachner31} and \ref{fig:pachner22} only change the marking locally,
	it is enough to check how the marking can possibly change near the vertices
	that are touched by these moves. 
	If one calculates \eqref{eq:vertexcond1} for these vertices
	before and after a $T_n$ move one sees that the marking can 
	only change according to Figure~\ref{fig:tnmove}.
\end{proof}

	\begin{figure}[tb]
		\centering
		\def\svgwidth{16cm}
		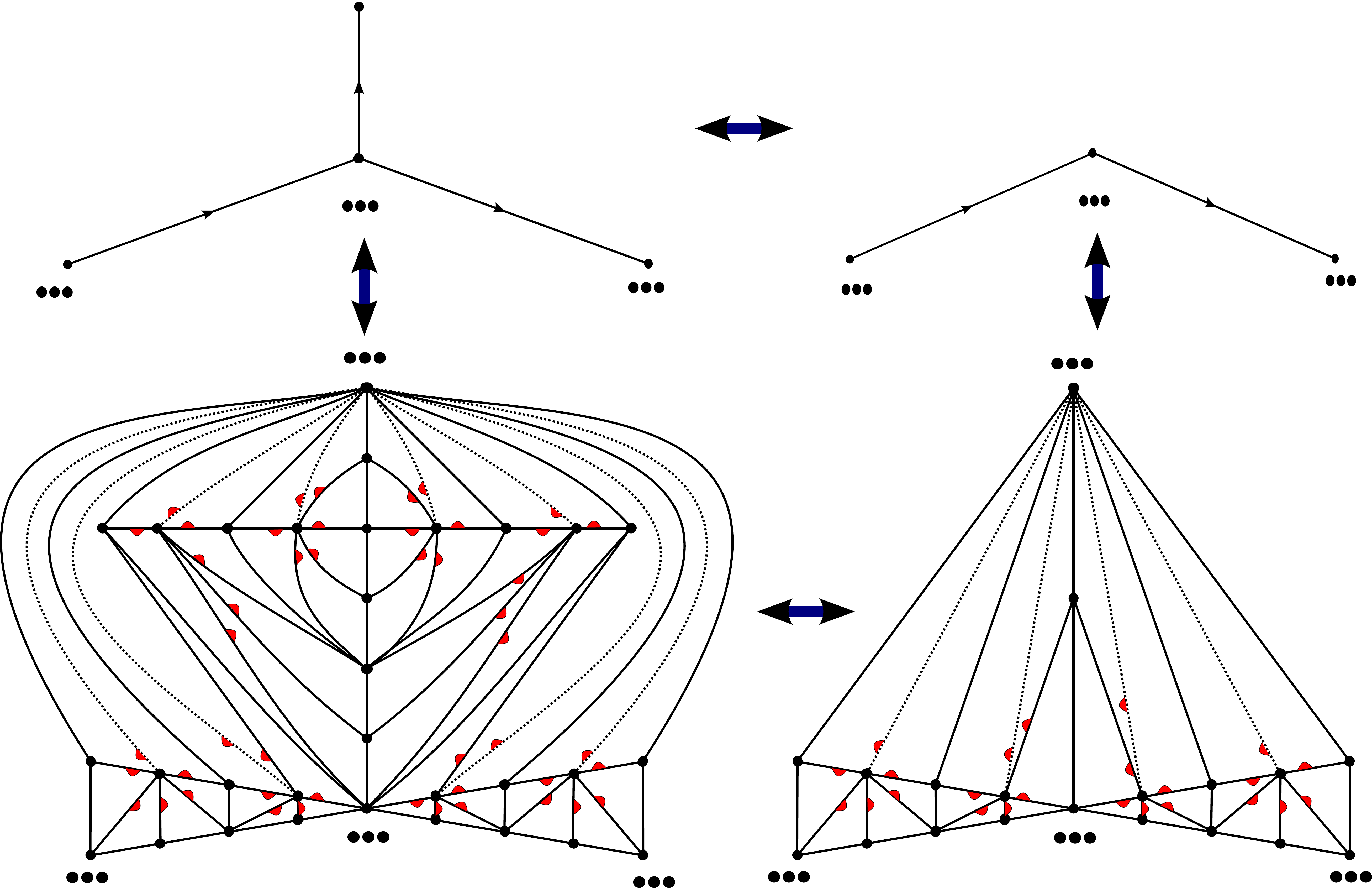
		\caption{The part of the triangulations 
			that need to be transformed into one another
			in case of removing or adding an univalent vertex $v$ with its edge. 
			The dotted edges have edge index -2, all other unlabeled edges have edge index -1.
			The orientation of the edges is left implicit, cf. Definition~\ref{def:rspintriang}.
			We need to remove the 24 numbered triangles from the middle, we proceed by removing them in pairs.
			We use the $T_n$ moves consecutively: first remove the two triangles marked by 1,
			then the two triangles marked by 2, etc until finally removing the two triangles marked by 12.
		}
		\label{fig:univalent_triang}
	\end{figure}

\begin{lemma}
	Removing a univalent vertex (whose edge was not marked) induces an isomorphism of $r$-spin structures.
	\label{lem:univalent}
\end{lemma}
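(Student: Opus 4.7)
The plan is to reduce the statement to the triangulation-based setting of Appendix~\ref{app:novak} and then eliminate the excess triangles with a sequence of $T_n$-moves, using Lemma~\ref{lem:tnmove}.

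First I would refine both PLCW decompositions (the one containing the univalent vertex $v$ and its edge $e$, and the one obtained after removing them) to triangulations by applying the refinement procedure of Section~\ref{app:refine} (two successive series of radial subdivisions followed by the boundary-cleanup of Lemma~\ref{lem:subdivb}). By Theorem~\ref{thm:rscomb} (and its triangulation-based counterpart \cite[Thm.\,4.18]{Novak:2015phd}), it suffices to exhibit an equivalence of the two resulting marked triangulations under the relation generated by Pachner moves, $T_n$-moves and the local changes of Lemma~\ref{lem:moves}. Note that since $e$ is not marked as a face edge, the refinements of the two PLCW decompositions agree outside a small disc around $v\cup e$; inside that disc the refinement of the PLCW decomposition \emph{with} the univalent pair contains $24$ additional triangles arranged as shown in Figure~\ref{fig:univalent_triang}, while the refinement of the PLCW decomposition \emph{without} them is the triangulation at the bottom of that figure.

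The core of the proof is then to remove these $24$ extra triangles in $12$ steps by successive application of $T_n$-moves, following exactly the pairing indicated by the numeric labels in Figure~\ref{fig:univalent_triang}: at the $k$-th step one contracts the pair of triangles labelled $k$ using the $T_{n_k}$-move of Figure~\ref{fig:tnmove} for the appropriate value of $n_k$. After each step one is left with a triangulation of the same disc, but with two fewer faces, and the edge indices of the remaining edges are determined by those of Figure~\ref{fig:univalent_triang}. The edge index $s_e$ of the edge emanating from $v$ does not appear as a parameter because it is forced by the admissibility condition \eqref{eq:vertexcond1} at $v$ (the univalent vertex has $N_v=2$, $D_v\in\{0,1\}$, and $\hat s_e=-1$), consistent with the remark preceding the lemma.

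The step I expect to require the most care is verifying that the edge indices produced after each $T_n$-move are precisely those that appear in Figure~\ref{fig:univalent_triang} for the next stage. This is a local bookkeeping exercise: each $T_n$-move prescribes how the labels $-1$ and $-2$ on the deleted triangles combine with the labels $s_1,s_2$ on the boundary edges, and one must check compatibility with the admissibility conditions around the vertices that are merged. Since all intermediate configurations lie inside the small disc around $v$, one need only use Lemma~\ref{lem:moves} (edge-flip, marking shift, deck transformation) to bring each intermediate configuration into the normal form that makes the next $T_{n_{k+1}}$-move applicable. Once the $12$ moves have been performed, the resulting triangulation coincides with the refinement of the PLCW decomposition without the univalent pair, so by Lemma~\ref{lem:tnmove} the two $r$-spin structures are isomorphic, as required.
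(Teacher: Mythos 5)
Your proposal follows essentially the same route as the paper's proof: pass to the associated triangulations of Definition~\ref{def:rspintriang}, observe they differ only in the $24$ triangles shown in Figure~\ref{fig:univalent_triang}, and remove these in the $12$ indicated pairs via the $T_n$-moves of Lemma~\ref{lem:tnmove}. Your additional remark that the edge index of the univalent edge is forced by the admissibility condition is consistent with the discussion preceding the lemma in the main text.
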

\begin{proof}
	When we remove an edge from a PLCW decomposition we need to compare the 
	associated triangulation with marking from Definition~\ref{def:rspintriang} 
	and then use the above defined moves to go from one to the other. 
	The part of the triangulations that need to be transformed 
	into one another together with the transformation steps
	are shown in Figure~\ref{fig:univalent_triang}.
\end{proof}

\begin{proof}[Proof of Proposition~\ref{prop:elementarymoves}]~
\\
\textsl{Move $b)$ in Figure~\ref{fig:edgemove} for $v \neq v'$:}
		As in the proof of Lemma~\ref{lem:univalent} we need to compare the marked triangulations associated to the marked PLCW decompositions.
	The part of the triangulations that need to be transformed 
	into one another 
		together with the transformation steps
	are shown in Figure~\ref{fig:edgemove_triang}.

	\begin{figure}[tb]
		\centering
		\def\svgwidth{16cm}
		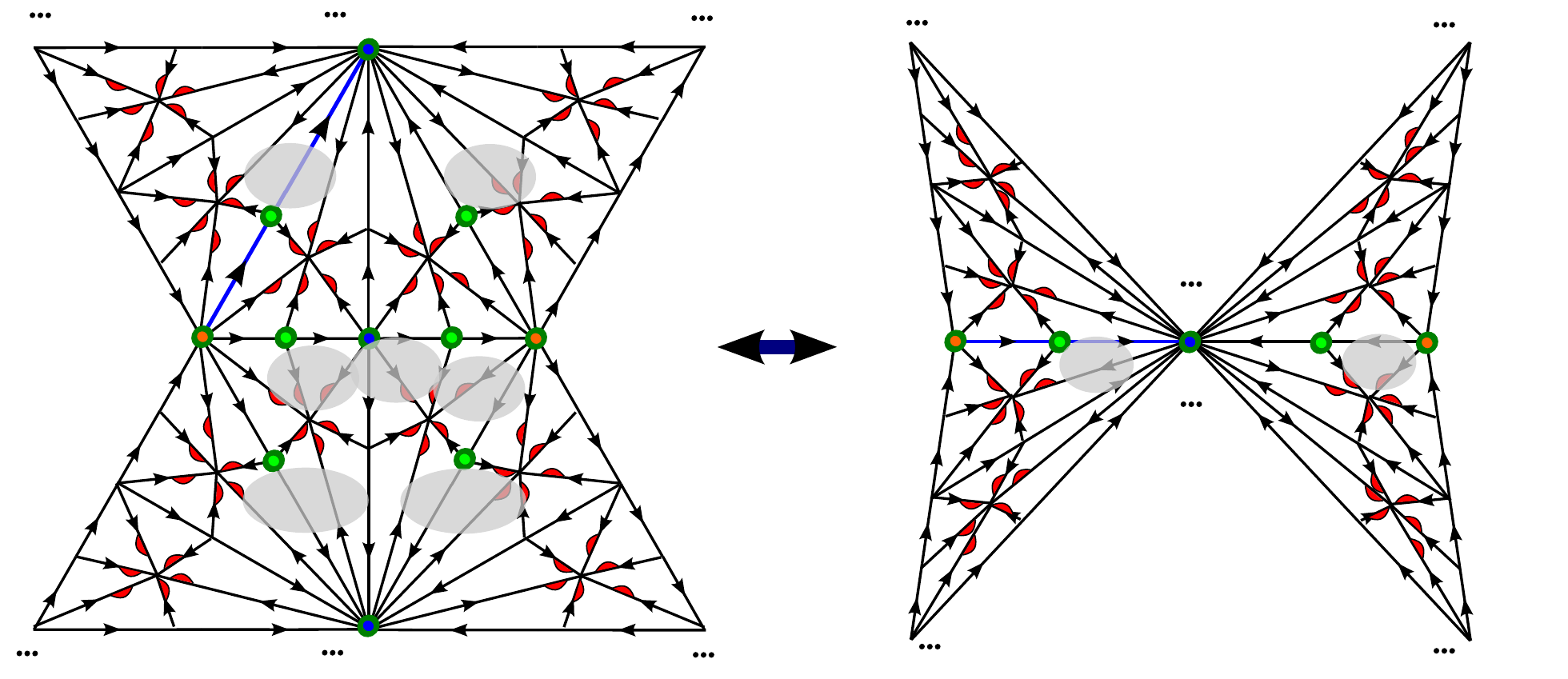
		\caption{The part of the triangulations 
			that need to be transformed into one another
			in case of removing or adding an edge 
			between the vertices $v$ and $v'$
				(cf.\ Figure~\ref{fig:edgemove}~$b)$).
				The edges between $v$, $v_l^{\mathrm{up}}$ and $v_m^{\mathrm{up}}$ have edge index -2,
				all other edges without edge index have edge index -1.
			We need to remove the 24 triangles from the middle, 
			of which 12 has been numbered in pairs.
			We use the $T_n$ moves consecutively: first remove the two triangles marked by 1,
			then the two triangles marked by 2, etc until finally removing the two triangles marked by 6.
			Then do the same thing again for the mirror pairs. 
		}
		\label{fig:edgemove_triang}
	\end{figure}

	Since we did local moves which induce
	isomorphisms of $r$-spin structures, it is enough to check how the edge indices will change
	at those vertices which have been touched by the above moves.
	These vertices are marked with a circle.
	Observe that at the vertices 
	$v_l$, $v_m$ and $v_r$
	one does not get any condition on $s$.
		The vertices $v_l^{\mathrm{up}}$, $v_l^{\mathrm{down}}$,
		$v_r^{\mathrm{up}}$ and $v_r^{\mathrm{down}}$ get identified with others.

	Assume that the vertices $v$ and $v'$ are distinct and that $s_i'=s_i$ ($i=1,\dots,4$). 
	At these two vertices one obtains 
		$s\equiv0\Mod{r}$.

\smallskip

\noindent
\textsl{Move $a)$ in Figure~\ref{fig:edgemove}:}
	When removing a bivalent vertex as in Figure~\ref{fig:edgemove}~$a)$, a similar argument applies.

\smallskip

\noindent
\textsl{Move $b)$ in Figure~\ref{fig:edgemove} for $v = v'$:}
	Indeed, look at the original PLCW decomposition and
	assume that the vertices $v$ and $v'$ are the same. Insert a bivalent vertex on the edge,
	remove one of the two new edges by the above and then the univalent vertex with its edge using 
	Lemma~\ref{lem:univalent}.  Again, one obtains
		$s\equiv0\Mod{r}$.
	
This completes the proof of the proposition.
\end{proof}

\subsection{Proof of Theorem~\ref{thm:tft}}\label{app:pf:thm:tft}

For Part~1 a
direct computation shows that 
the morphism assigned to a PLCW decomposition and
the morphism assigned to the triangulation obtained by 
the refinement of the PLCW decomposition are the same.
One needs to use that multiplication with the $\tau^{-1}$'s
in the state-sum construction amount to canceling 
the ``bubbles'' $\mu\circ\Delta$.
Independence of the choice of the function $V$ follows from
the fact that $\tau$ is a central element.

	Next we check independence from the triangulation and from the choice of marking (for a given $r$-spin structure).
Let us assume that $\Sigma$ has $b$ ingoing and no outgoing boundary components.
Let $T_A(\Sigma)$ denote the morphism in $\Sc$ assigned to $\Sigma$ using 
a triangulation by the state sum construction of \cite{Novak:2015phd}.
	Note that we get three tensor factors of $A$ for each boundary component, since each boundary component consists of three edges.
	Now we explain how to reduce $A^{\otimes3}$ to $A$ for each boundary component.
	Recall that we used the notation $(13)$ for the cyclic permutation of the first and third tensor factors.
Composing $T_A(\Sigma)$ with
$\bigotimes_{i=1}^b(13)\circ(\Delta\otimes\id_A)\circ\Delta\circ(\tau^{-2}\cdot(-))\circ\iota_{\lambda_i}$,
we obtain the morphism in \eqref{eq:step6b}. 
	To show this we use that the factors of $\tau^{-1}$ remove the ``bubbles'' $\mu\circ\tau$.
If $\Sigma$ has outgoing boundary components, it is easy to see that
composing with appropriate factors of $\Gamma_{i,j,\eps}$ maps of \cite[Sec.\,5.4]{Novak:2015phd} and $\pi_{\lambda_i}\circ\mu^{(3)}\circ (13)$ again yields the morphism in \eqref{eq:step6b}.
Independence of the details of the triangulation is shown in
\cite[Thm.\,5.10]{Novak:2015phd}.
This latter theorem also states that $T_A(\Sigma)=T_A(\Sigma')$ for 
isomorphic $r$-spin surfaces $\Sigma$ and $\Sigma'$, so that the
assignment 
	$\funZ_A:\Bord{r}\to\Sc$ 
is well defined on morphisms.

For Part~2	functoriality can now be seen easily from
the above discussion and by using
\cite[Prop.\,5.11]{Novak:2015phd},
since the embeddings and projectors $\iota_{\lambda_i}$ and $\pi_{\lambda_i}$ 
compose to $P_{\lambda_i}$,
which can be omitted due to \cite[Prop.\,5.13]{Novak:2015phd}.
Monoidality and symmetry follow directly 
from the construction.
This completes the proof of Theorem~\ref{thm:tft}.

\subsection{Proof of Proposition~\ref{prop:indexhol}}\label{app:pf:lem:indexhol}

	Part~\ref{lem:indexhol1} does not involve 
	the marked PLCW decomposition and is shown in
	 \cite[Lem.\,3.12]{Novak:2015phd}.

	Part~\ref{lem:indexhol3} follows directly from the discussion in the main text:
	homotopic curves in $\Sigma$ 
	(in the sense described in the beginning of Section~\ref{sec:arfinv})	
have homotopic lifts in $F_{GL}\Sigma$ and homotopic
	curves in $F_{GL}\Sigma$ have the same lifts in $P_{\widetilde{GL}}\Sigma$
	after fixing them at the same starting point.

	\begin{figure}[tb]
		\centering
		\def\svgwidth{10cm}
		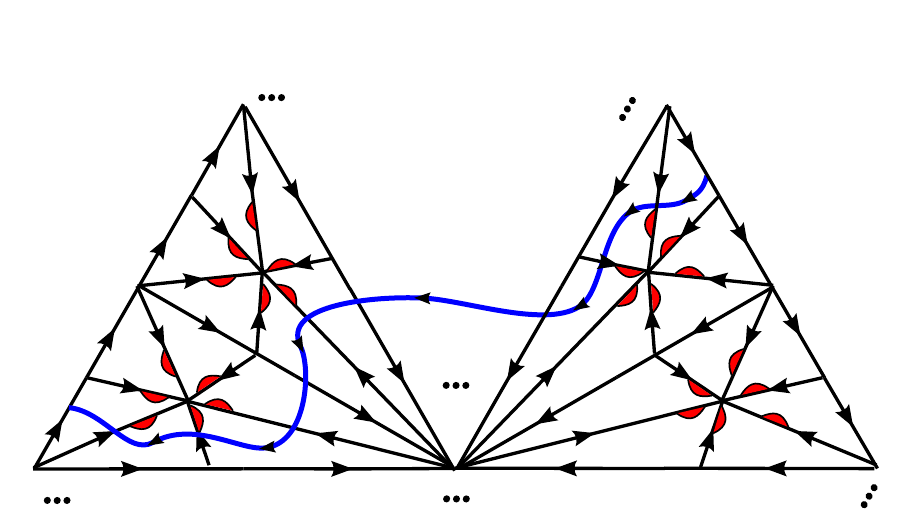
		\caption{Detail of a face with interior edges of a refined PLCW decomposition 
		with the segment $p\in A(\gamma)$ of the curve
		$\gamma$ crossing it. 
		Using Part~\ref{lem:indexhol3}, we can assume that the segment of the curve crosses as shown in the figure.
		All edge indices without edge labels are $-1$.
		Notice that when crossing the dotted area, the lift of the curve does not pick up
		any of the $\omega_e$ contributions.}
		\label{fig:holonomy}
	\end{figure}
	For Part~\ref{lem:indexhol2}, we are going to calculate the holonomy by summing up the contributions
	for all arcs $A(\gamma)$ as in \eqref{eq:holonomy}.

	\begin{figure}[tb]
		\centering
		\def\svgwidth{6cm}
		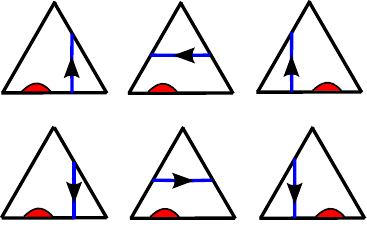
		\caption{The different values of $\kappa_e$ for different positions of the crossing curve segment.
		The edge $e$ is where the line segment leaves the triangle.}
		\label{fig:kappa}
	\end{figure}

	The contribution for $p\in A(\gamma)$ can be computed as follows. 
	Take the face $f_p$ which $p$ crosses and take its refinement to a triangulation as in Section~\ref{app:refine}.
	Let us first assume that this face has only inner edges, as in Figure~\ref{fig:holonomy}.
	Let $e_{f_p}$ be the edge where $p$ leaves the face $f_p$.
	The contribution of $p$ can now be calculated by summing up for each triangle
	the ``$\omega_{e}$'' contributions of \cite[Section\,4.7]{Novak:2015phd}. 
	For a given triangle $t$ and edge $e$, where the curve leaves $t$, 
	the contribution is $\omega_{e}=\hat{s}_{e}+\kappa_{e}$
	by \cite[(4.33)]{Novak:2015phd},
	where $\kappa_{e}$ is given in Figure~\ref{fig:kappa}.

	First the curve crosses 3 triangles, which give a contribution of
	\begin{align*}
		(0+0)+(0+0)+(0+0)=0\ .
	\end{align*}
	Notice that when afterwards  crossing the dotted area, the lift of the curve does not pick up
	any of contributions: for every group of 4 triangles the contribution is
	\begin{align*}
		(-2+1)+(0+1)+(-2+1)+(0+1)=0\ .
	\end{align*}
	If the marked edge of the face $f_p$ is on the right side of $p$
	with respect to the orientation of $f_p$ then the curve has crossed
	the corresponding edge with edge label -2 and the contribution is
	\begin{align*}
		\hat{\delta}_{f_p}^p=1.
	\end{align*}
	Finally the curve crosses 6 triangles, which give a contribution of
	\begin{align*}
		(-2+1)+(0+1)+(-2+1)+(-1+1)+(-1+1)+(\hat{s}_{e_p}+1)=\hat s_{e_p}\ .
	\end{align*}
	This proves the formula \eqref{eq:holonomy} if $\gamma$ is a closed curve.

	\begin{figure}[tb]
		\centering
		\def\svgwidth{10cm}
		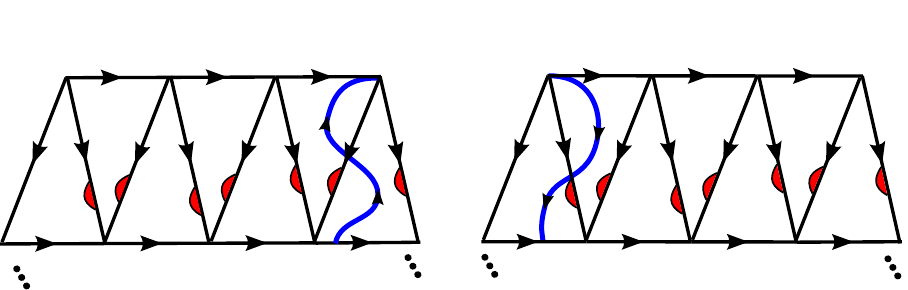
		\caption{Detail of two (not necessarily different) 
		faces with two boundary edges of a refined PLCW decomposition 
		where the curve $\gamma$ starts $(b)$ and ends $(a)$, 
		i.e.\ at the image of $1\in\Cb^{\times}$ under the boundary parametrisation.
		All edge indices are $-1$ unless otherwise noted.}
		\label{fig:holonomy_bdry}
	\end{figure}

	If the curve $\gamma$ starts and ends on the boundary of the surface then we take it into account as follows.
	The parts of the triangulation where $\gamma$ starts and ends is shown in Figure~\ref{fig:holonomy_bdry}.
	\begin{figure}[tb]
		\centering
		\def\svgwidth{4cm}
		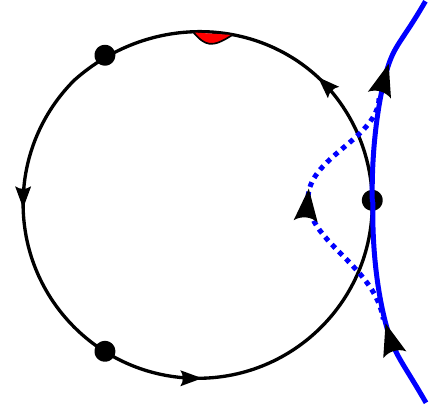
		\caption{Detail of $D^{0}$ with the image of the identification of the neighbourhoods of
		the starting and ending point of $\gamma$.
		The circle denotes the two boundary components mapped onto each other.
		We obtain a closed curve which, by using Part~\ref{lem:indexhol3}, 
		we are allowed to change by a homotopy to the dotted curve.
		This allows us to compute the holonomy in terms of the ``$\omega_e$'' contributions as before.
		}
		\label{fig:closing_curve_bdry}
	\end{figure}
	As described in the main text we have $r$-spin isomorphisms $D^{\kappa}\to D^0$ of some neighbourhoods
	of the starting and ending point of $\gamma$, both sending these two points to $1\in D^0\subset \Cb^0$.
	Under these isomorphisms the neighbourhood of $1$, together with a part of $\gamma$ and the boundary edges 
	is shown in Figure~\ref{fig:closing_curve_bdry}. This way we can handle $\gamma$ as a closed curve
	and by Part~\ref{lem:indexhol2} we can modify the curve by a homotopy as in Figure~\ref{fig:closing_curve_bdry},
	so that it crosses the edges $e_{\mathrm{start}}$ and $e_{\mathrm{end}}$.

	We can now calculate the contribution of these crossed triangles as before.
	The curve first crosses the boundary triangle in Figure~\ref{fig:closing_curve_bdry} picking up the contribution 
	\begin{align*}
		\hat{s}_{e_{\mathrm{start}}}+1\ .
	\end{align*}
	Then it crosses the two triangles in Figure~\ref{fig:holonomy_bdry}~$b)$ picking up the contribution
	\begin{align*}
		(0+0)+(-1+0)\ .
	\end{align*}
	After crossing inner edges finally it crosses the two triangles in 
	Figure~\ref{fig:holonomy_bdry}~$a)$, using Figure~\ref{fig:closing_curve_bdry},
	picking up the contribution
	\begin{align*}
		(0+1)+(0+\hat{s}_{e_{\mathrm{end}}})\ .
	\end{align*}
	Summing up the above contributions, we get formula \eqref{eq:holonomy}.

	This completes the proof of Proposition~\ref{prop:indexhol}.

\phantomsection
\addcontentsline{toc}{section}{References}

\end{document}